\definecolor{darkolivegreen}{rgb}{0.33, 0.42, 0.18}
\definecolor{darkpurple}{rgb}{0.4, 0.0, 0.4}
\newcommand{\vds}{\vdash_{\mathsf{S}}}
\newcommand{\sab}{S_{a,b}}
\begin{document}

\title{A logical perspective on intending to keep a true secret
}
\titlerunning{A logical perspective on intending to keep a true secret}
%
\author{Alessandro Aldini\inst{1}\orcidID{0000-0002-7250-5011} \and Davide Fazio\inst{2}\orcidID{0000-0001-8136-732X} \and
Pierluigi Graziani\inst{3}\orcidID{0000-0002-8828-8920}\and
Raffaele Mascella\inst{4}\orcidID{0000-0002-1305-7853} \and Mirko Tagliaferri\inst{5}\orcidID{0000-0003-3875-0512}}
\authorrunning{Aldini, Fazio, Graziani, Mascella and Tagliaferri}
%
\institute{University of Urbino, Italy,
\email{alessandro.aldini@uniurb.it}\and 
University of Teramo, Italy, 
\email{dfazio2@unite.it}\and
University of Urbino, Italy,
\email{pierluigi.graziani@uniurb.it}\\
\and
University of Teramo, Italy, \email{rmascella@unite.it}\and
University of Urbino, Italy,
\email{mirko.tagliaferri@uniurb.it}}

\maketitle              
\begin{abstract}
Logical investigations of the notion of secrecy are typically concentrated on tools for deducing whether private information is well hidden from unauthorized, direct, or indirect access attempts.
This paper proposes a multi-agent, normal multi-modal logic to capture salient features of secrecy's intentions. Specifically, we focus on the intentions, beliefs, and knowledge of secret keepers and, more generally, of all the actors involved in secret-keeping scenarios. In particular, we investigate intentions underlying the keeping of a true secret, namely a secret concerning information known (and so true) by the secret keeper. 
The resulting characterization of \textit{intending to keep a true secret} provides valuable insights into conditions ensuring or undermining secrecy depending on agents' attitudes and links between secrets and their surrounding context.
We present the proposed logical system's soundness, completeness, and decidability results. Furthermore, we outline some theorems with potential applications to several fields, e.g., computer science and the social sciences.

\keywords{Logic of Secret  \and Intending to keep a secret \and Modal Logic.}
\end{abstract}

\begin{flushright}
{\ttfamily} \textit{Is your man secret?\\
Did you ne'er hear say,\\
'Two may keep counsel, putting one away'?}\\ 
by William Shakespeare,\\
Romeo and Juliet, Act II SCENE 4.
\end{flushright}

\section{Introduction}\label{introd}

Secrets are part of our private and public lives and concern information we wish to remain confidential. Keeping a cooking recipe secret from competitors or keeping home banking credentials secret from hackers are just a few examples. However, what characterizes secrets is the separation between agents who hold the information and agents from whom that information is hidden.

Studies on the notion of secrecy abound in specific literature. There are, for example, publications on secrets in the field of psychology \cite{Slepian2017,Slepian2022,Kelly2002}, philosophy \cite{Bellman1981}, computer science \cite{Halpern2008}, and semiotics \cite{Volli2020}. Alongside these studies, there is also research on secrecy from the perspective of formal logic \cite{Ismail2020,Xiong2023}. 
Our contribution lies in this stream of research.
Indeed, logical investigations have made formally explicit salient aspects for understanding the concept of a secret, such as the distinction between \textit{knowing a secret} (\cite{Xiong2023}) and \textit{keeping/intending to keep a secret}\footnote{See Slepian \cite{Slepian2017} for an accurate analysis of this distinction in psychology.}. Nevertheless, much more work needs to be done. In particular, the role of \textit{intentionality} in keeping a secret requires a more in-depth investigation.

The present article pushes forward logical investigations concerning the concept of ``knowing a (true) secret'' carried out in \cite{Xiong2023} with the aim of extending them to cope with the notion of \emph{intending to keep a true secret}  by highlighting its epistemic and intentional content. In particular, we are interested in the formal treatment of propositions like ``Agent $a$ \emph{intends} to keep $\varphi$ \emph{secret} from agent $b$" and related notions. In this respect, our key idea is close to, e.g., \cite{Slepian2022a,Slepian2022}, where, instead of being conceived as the action of concealment of pieces of information, secrecy is more generally defined as ``an intention to keep some piece of information, known to oneself, unknown from one or more others'' (cf. \cite[p. 542]{Slepian2022a}). In addition, we take a different approach with respect to \cite{Xiong2023} by including within our definition of keeping a true secret two distinct epistemic notions (knowledge and beliefs). This is done to emphasize that agents may not have privileged access to the mental states of other agents. In turn, this allows us to describe scenarios in which an agent has the intention of keeping a secret even though the information within the secret is known by other agents in the system.\\ Our investigation will be carried out from a static and descriptive perspective. Indeed, some dynamic aspects of secrecy are intentionally left aside, e.g., belief revision/updating of agents' knowledge due to the revelation of secrets or possible interactions between secret keepers. As a final output, we outline a system that can be regarded as  ``intermediate'' between the ones outlined in \cite{Xiong2023} and \cite{Ismail2020} (cf. Section \ref{rel}), and as a new tool for investigating intentions of secrecy, tying those to important epistemic notions such as, e.g., beliefs.
 
The paper is structured as follows. Section \ref{sect:motiv} discusses and motivates some desired requirements of the notion of secret and introduces our definition informally. Based on the provided intuitions, Section \ref{sec:a logical system} presents a logical system for reasoning about keeping a true secret and proves the meta-theorems regarding our system's consistency, completeness, and decidability.
Section \ref{sec some properties of true} shows some fundamental properties about true secrets, while Section \ref{sec:relation between} investigates interesting relations between our logical framework and the theory of information flow analysis. Section \ref{rel} briefly compares our proposal and other systems available in the literature. Finally, in Section \ref{concl}, we discuss future works.

\section{Formalizing true secrets}\label{sect:motiv}

In \cite{Xiong2023}, Z. Xiong and T. {\AA}gotnes investigate the concept of ``knowing a secret'' as a modality. Indeed, they combine standard notions of knowledge and ignorance from epistemic logic to obtain a formal account of situations in which an agent knows a proposition and knows that everyone else does not know it. Upon considering a non-empty set $Ag$ of agents, they define the proposition ``$a$ knows the secret $\varphi$'' ($S_{a}\varphi$) as \[S_{a}\varphi:=K_{a}(\varphi\land (\bigwedge_{b\in Ag\smallsetminus\{a\}}\neg K_{b}\varphi)).\]
Actually, the definition provided by Xiong-{\AA}gotnes is rather meaningful. Indeed, it captures two important epistemic features of secrecy's intentions, namely the fact that secret keepers actually \emph{know} the proposition intended to be kept secret and \emph{know} that anyone else does not know it. However, as the same authors point out, the important thing ``is not that $b$ actually should not know, but that [...] \emph{$a$ should know (or believe) that $b$ doesn’t know}. This property of secrets is [...] often implicitly assumed,
e.g., in formulations such as “… \emph{intended to be kept hidden}”.

For the sake of completeness, and in order to make clear similarities and differences between the Xiong-{\AA}gotnes operator and our, we summarize some results they obtain in Table~\ref{tab:agot} (see \cite[Proposition 1--26]{Xiong2023} for details). Following notational conventions from \cite{Xiong2023}, for any $\varphi$ in the language of classical epistemic logic, we write $\models_{\star}\varphi$, where $\star$ is T or S5, to denote that $\varphi$ is provable in the usual normal epistemic logic T or S5, respectively, and we write simply $\models\varphi$ to denote that $\varphi$ is provable in the basic system K.

\begin{table}[t]
   \centering
        \setlength{\tabcolsep}{5pt}
    \begin{tabular}{|p{7cm}|p{8cm}|}
        \hline
        1. $\models_{T} S_{a}\varphi \to (K_{a}\varphi \land \neg K_{b}\varphi)$, when $a\ne b$. & 
        15. $\models_{S5}S_{a}\varphi\to S_{a}S_{a}\varphi$. \\
        
        2. $\models_{T}\neg K_{b}S_{a}\varphi$, when $a\ne b$. & 
        16. $\not\models_{S5}\neg S_{a}\neg S_{a}\varphi$. \\
        
        3. $\models_{S5}\neg S_{a}K_{b}\varphi$, when $a\ne b$. & 
        17. $\not\models_{S5}\neg S_{a}\varphi\to S_{a}\neg S_{a}\varphi$ and $\not\models_{S5}\neg S_{a}\varphi\to \neg S_{a}\neg S_{a}\varphi$. \\
        
        4. $\models_{S5}\neg S_{a}\neg K_{b}\varphi$, when $a\ne b$. & 
        18. $\models S_{a}(\varphi\to\psi)\to (S_{a}\varphi\to S_{a}\psi)$. \\
        
        5. $\models_{S5}K_{a}S_{a}\varphi \lor K_{a}\neg S_{a}\varphi$. & 
        19. $\models(S_{a}\varphi \land S_{a}\psi)\to S_{a}(\varphi\land\psi)$. \\
        
        6. $\models_{S5}\neg S_{a}\varphi \to  K_{a}\neg S_{a}\varphi$. & 
        20. $\not\models_{S5}S_{a}(\varphi \land \psi)\to S_{a}\varphi$. \\
        
        7. $\models_{S5}S_{a}\varphi \to K_{a}S_{a}\varphi$. & 
        21. For any $\varphi$, $\not\models_{S5}S_{a}\varphi$. \\
        
        8. $\models_{T} S_{a}\varphi \to \neg S_{b}\varphi$, when $a\ne b$. & 
        22. If $\models \varphi$, then $\models \neg S_{a}\varphi$. If $\models\varphi$ then $\models\neg S_{a}\neg\varphi$. \\
        
        9. $\models_{S5}\neg S_{a}S_{b}\varphi$, when $a\ne b$. & 
        23. $\not\models_{S5}\neg S_{a}(\varphi\to\psi)\to(\neg S_{a}\varphi\to\neg S_{a}\psi)$. \\
        
        10. $\models_{S5}\neg S_{a}\neg S_{b}\varphi$, when $a\ne b$. & 
        24. For some $\varphi,\psi$,  $\models\varphi\to\psi$, but  $\not\models_{S5}S_{a}\varphi\to S_{a}\psi$. \\
        
        11. $\models_{T}S_{a}\varphi\to\neg S_{a}\neg\varphi$ but $\not\models_{S5}\neg S_{a}\neg\varphi\to S_{a}\varphi$. & 
        25. For some $\varphi,\psi$,  $\models\varphi\to\psi$, but $\not\models_{S5}\neg S_{a}\varphi\to \neg S_{a}\psi$. \\
        
        12. $\models_{T}S_{a}\varphi\to\varphi$. & 
        26. For some $\varphi,\psi$,  $\models\varphi\to\psi$, but $\not\models_{S5}S_{a}\neg\varphi\to S_{a}\neg\psi$. \\
        
        13. $\models_{T}\neg S_{a}\bot$. & 
        27. For some $\varphi,\psi$,  $\models\varphi\to\psi$, but $\not\models_{S5}\neg S_{a}\neg\varphi\to \neg S_{a}\neg\psi$. \\
        
        14. $\models_{T}\neg S_{a}\top$. & 
        28. If $\models \varphi\leftrightarrow\psi$ then $\models S_{a}\varphi\leftrightarrow S_{a}\psi$. \\
        \hline
    \end{tabular}
    \caption{Some results from Xiong-{\AA}gotnes' \cite{Xiong2023}}
    \label{tab:agot}
\end{table}
As the reader may notice, $S_{a}$ behaves adequately in formalizing epistemic aspects of secrecy since it allows one to describe epistemic differences between those who know a secret and outsiders, it grants that secret keepers are always aware of secrets they conceal (they know the secret, they know of being secret keepers or not), and enlighten an interesting fact: no proposition that is known by anyone or, at least by outsiders, is a known secret. Among other things, secrets are always ``reflexive''; namely, having a secret is (at least whenever the S5 framework is assumed) always equivalent to having the secret of having a secret.

A further, interesting property of $S_{a}$ is that it satisfies what Xiong-{\AA}gotnes call the \emph{interpolation rule}, namely for any $\varphi,\psi,\chi$:
\[\text{if} \models\varphi\to\psi\text{ and }\models\psi\to\chi,\text{ then }\models S_{a}\varphi\land S_{a}\chi\to S_{a}\psi.\label{eq:intprule}\]
This inference rule ensures that secret knowledge is closed under possibly long chains of deductions (cf. below).

It is worth noticing that, among peculiar properties of $S_{a}$, and of ``having a secret'' we find what \cite{Xiong2023} calls \emph{Secret negation completeness} (item 5 in Table \ref{tab:agot}). Indeed, at least whenever the S5 framework is taken into account, any agent always knows whether she has or has not a secret. As we will see, this is a rather strong property that does not hold, whenever the intention of keeping a secret is considered as it presupposes not only negative introspection w.r.t. knowledge/belief, but 
 also some kind of negative introspection w.r.t. intentions (see Proposition \ref{prop:notnegcompl} below).

As \cite{Xiong2023} highlights, $S_{a}$ does not take into account \emph{intentional} aspects of secrecy. This sets their work as a genuine attempt to formalize the notion of ``having a secret'' which does not involve \emph{per sé} any intentional component (cf.  \cite{Slepian2022a,Slepian2017}). However, as already mentioned above, whenever the concept of ``keeping a secret'' is considered, one has to take into account its intentional aspects. Indeed, \cite{Slepian2022a} provides a new approach to secrecy dynamics, as it distinguishes within them, against previous approaches describing the concept of keeping a secret as an action of concealment, two different components. The first is an intention, namely $(a)$ the intention of keeping something secret. It does not require actions to be performed but is rather a predisposition to conceal a piece of information from one or more individuals \emph{provided that the context of reference requires it}. The second is $(b)$ an action of concealment that takes place provided that definite pre-conditions, or contexts, require it in order to avoid undesired information leakages. 
Our paper will be devoted to providing a formal treatment of $(a)$, while we refer the reader, e.g., to \cite{Hoek2012} for a logical investigation of concealment and revelation actions.

The specific notion of ``intending to keep a secret'' we will formalize relies on a scenario with an agent, call it $a$, representing a secret keeper, and (at least\footnote{Think of a group of 
individuals, e.g. a company, taken as a whole.}) another agent, call it $b$, representing the 
nescient, to whom it is agent $a$'s intention that the secret not be revealed.

Following \cite{Slepian2022a}, the intentionality component of ``intending to keep a secret'' we are after to formalize should not be meant as an intention of acting effectively to reach a certain goal, but rather as an intention to bring about a state of affairs with certain features that an agent $a$ has \emph{as long as the agent's reference context sets it as preferable for the achievement of} $a$'s goals. Given this understanding of ``intending to'', we will focus on agent $a$'s intention of preserving both the truth and the secrecy of a 
certain secret, as it happens, for instance, in the message exchange of cryptographic protocols 
over the Internet or in the case of database management systems protecting sensitive 
repositories. Hence, for our definition, we are not interested in scenarios in which agent $a$ 
might intend to alter the object of the secret.

Our notion of ``intending to keep a true secret'' is based on the following intuitive assumptions that, extending the notion of ``having a secret'' already treated by \cite{Xiong2023}, concentrate on the standpoint of agent $a$:
\begin{enumerate}
    \item Agent $a$ knows the object of the secret (called \textit{secretum}); since we also assume that knowledge is factive, it follows that the content of $a$'s secret is true. Hence the expression \textit{true secret}.
    \item Agent $a$ believes that agent $b$ does not know the \textit{secretum};
    we use beliefs rather than knowledge because we assume that agent $a$ has no direct access to $b$'s mental attitudes.
    \item Agent $a$ intends to behave in such a way that agent $b$ does not end up knowing the \textit{secretum}, while preserving its truth. More precisely, it is $a$'s intention to preserve the integrity of the \textit{secretum}. Both intentions are held provided that the agent's reference context sets their objects as preferable, or suitable, for the achievement of $a$'s goals.\footnote{If not specified otherwise, we will always treat ``intending to'' in this precise sense.}
\end{enumerate}
To formalize such assumptions, we employ three modal, multi-agent operators expressing knowledge ($K$), belief ($B$), and intentionality ($I$), combined\footnote{{The interaction between knowledge and belief in formal epistemology has been a subject of considerable study for several decades, with early foundational work laid by Hintikka \cite{Hintikka1962}. For further literature on the connection between $K$ and $B$ see \cite{Rasmus2024}. An important line of research in epistemic logic also explores the concept of intentionality. For further reading on this topic, see \cite{Lorini2008}.}} to obtain the following notion of true secret:
\[
S_{a,b}\varphi := K_{a}\varphi \wedge B_{a}\neg K_{b}\varphi \wedge I_{a} ( \varphi \land \neg K_{b} \varphi)
\label{sstar}
\]
where formula $\varphi$ represents the \textit{secretum}.

The first two conjuncts express items $1.$ and $2.$, respectively. The epistemic operator $K_i$ for knowledge and the doxastic operator $B_i$ for belief are defined for each agent $i$. Hence, the formulas should be read as: ``agent $a$ knows $\varphi$'' ($K_a \varphi$) and ``agent $a$ believes that agent $b$ does not know $\varphi$'' ($B_a \neg K_b \varphi$). 
In particular, we recall the assumption we take as the main distinguishing feature between knowledge and belief, i.e., knowledge is factive ($K_a \varphi \rightarrow \varphi$) while belief is not. Indeed, we stress that we are imposing neither that agent $a$ knows that agent $b$ does not know $\varphi$, nor that agent $b$ indeed does not know $\varphi$. Instead, $B_{a}\neg K_{b}\varphi$ encodes the point of view of $a$ w.r.t. $b$'s knowledge about $\varphi$\footnote{An analogous relaxation is common practice, e.g., in the setting of cryptography, where the theoretical but unrealistic assumption of  \textit{perfect security} is replaced, for all practical purposes, by the notion of  \textit{semantic security}. Intuitively, the former assumes no information is revealed, whereas the latter implies that some information can be feasibly extracted (however, such an event shall be negligible).}.

The third conjunct expresses item $3.$ and represents a concealing component formalizing the intention of agent $a$ to sustain $b$'s ignorance about $\varphi$.  
The specific condition of ignorance for agent $b$ is called \textit{factive ignorance}, which goes back to Fitch~\cite{Fitch1963,Goranko2021,CIFMA20,Aldini2023,Tagliaferri2023} and is defined as $T_{b} \varphi:=\varphi \land \neg K_b \varphi$ (this motivates the reference to the concept of \textit{true secret}).

The intentionality operator $I_a \varphi$ is inspired (with certain interpretational modifications) by Colombetti~\cite{Colombetti1999} and can be read as ``agent $a$ intends to bring about a state of affairs in which $\varphi$ is true as long as in $a$'s context of reference $\varphi$ is preferable for the achievement of $a$'s goals''. Indeed, using an intentionality operator $I_a$ rather than a communication operator is a crucial aspect of our approach\footnote{
A fundamental difference between
our $I_{a}$ intention mode and, e.g., communication-based updates in dynamic epistemic logic (DEL) lies in the fact that intention is not inherently dynamic in the sense of DEL but rather goal-oriented.}.
In fact, by replacing the third conjunct with a statement meaning ``agent $a$ does not communicate $\varphi$ to agent $b$'', we would be talking about an omission rather than a secret. Keeping information secret is a disposition to keep information confidential. It may be motivated by various factors, such as privacy, data protection, or maintaining a competitive advantage. Differently, omitting information is tantamount to not actively communicating information. This does not necessarily imply an intention to keep the information secret; it may depend on a mistake, ignorance, or the assumption that agent $b$ already knows the information. Furthermore, keeping a secret might not involve just the will of not communicating its content, but it may entail also the intention of acting directly or indirectly to avoid that the nescient might obtain information through other direct or indirect sources.
Moreover, not only agent $a$ is considered to know her intentions (\textit{transparency} condition), but her intention to bring about a state of affairs in which $\varphi$ is true is equivalent to intending to bring about a state of affairs in which she \emph{knows} that  $\varphi$ is true (\textit{awareness} condition).\label{transparawar}

As we will state formally, the formula $I_a T_b \varphi$ subsumes $I_a \varphi$, $I_a K_a \varphi$, and $I_a \neg K_b \varphi$, which are all jointly necessary to capture the intentions that agent $a$ must manifest to properly ensure our idea of ``intending to keep a true secret''.
$I_a \neg K_b \varphi$ expresses that agent $a$ does not want agent $b$ to know the \textit{secretum}.  $I_a \varphi$ and $I_a K_a \varphi$ express the intentions of preserving truth and agent $a$'s knowledge of $\varphi$, respectively. Of course, as already recalled above, all intentions must be understood as holding as long as their objects align with the agent's goals. Actually, assuming these conditions is necessary to exclude undesirable strategies. For instance, by omitting $I_a \varphi$, agent $a$ may be open to the intention of falsifying $\varphi$, which obviously would keep agent $b$ ignorant about $\varphi$ (disregarding that preserving the truth of $\varphi$ is as important as guaranteeing its confidentiality for agent $b$). Similarly, by omitting $I_a K_a \varphi$, agent $a$ could simply intend to conceal $\varphi$ to everyone, including $a$ herself. These ideas are counterintuitive for our purposes, as shown by the following practical example. Imagine we had a machine assigned to protect the accesses to a Bitcoin digital wallet. We would not want that, to preserve the confidentiality of the proposition ``The string \texttt{E9873D***} is the private key of wallet $W$'', the machine either (i) destroys the wallet (thus making the proposition false) or (ii) erases and forgets the unique copy of the key\footnote{Around 20\% of the existing Bitcoin appear to be in lost or otherwise stranded wallets, the confidentiality of which is not in question~\cite{NYT}.} (thus making the proposition, which would remain true, unknown to the machine).

{Our approach emphasizes the complex interactions among knowledge, belief, and intention within a multimodal context, illuminating the dependencies and relationships between these modalities. Additionally, it would be worthwhile to consider an approach that treats secrecy as a primitive modality, as explored in \cite{Xiong2023}. This second perspective can potentially streamline the semantics and axioms of the modal system, leading to valuable insights into the properties of secrecy in epistemic contexts. Both approaches offer distinct advantages, yet they cater to different analytical objectives. While we acknowledge the benefits of treating secrecy as a primitive modality, our current research focuses on the interdependencies among knowledge, intention, and belief, demonstrating how these modalities contribute to a more nuanced understanding of secrecy within a broader epistemic framework. We intend to reserve the further exploration of modeling secrecy as a primitive modality for future research.}

In general, the conditions that characterize our notion of secret depend on the properties we assume for the three underlying modal operators (e.g., factivity of $K_i$ and awareness of $I_i$). Such an analysis will be the subject of the following sections, where we will present the formal syntax and semantics of a modal logic encompassing all the presented ingredients.

\section{A logical system for true secrets}\label{sec:a logical system}

This section introduces the framework in which our notion of ``intending to keep a secret'', or ``keeping a secret" in a nutshell\footnote{In the rest of this article, we will always make use of the abbreviation ``keeping a secret" whenever it does not create misunderstandings. Therefore, in most cases, although it must always be considered an essential part of our definition, the intentional component of ``intending to keep a true secret'' will be left implicit.} will be treated formally. As it will be clear, the machinery introduced is a normal modal logic called $\mathsf{S}$, endowed with intention, knowledge, and belief operators satisfying minimal conditions.

Let $Ag$ be a non-empty, finite set of agents, and let \textit{Var} be an infinite, countable set of variables. Let $\mathrm{Fm}_{\mathsf{S}}$ be the smallest set of formulas generated by the following grammar:
$$\varphi:=\ p\ |\ \neg\varphi\ |\ \varphi\land\varphi\ |\ I_{a}\varphi\ |\ K_{a}\varphi\ |\ B_{a}\varphi$$
where $p\in \mathit{Var}$ and $a\in Ag$. As customary, we set $\varphi\lor\psi:=\neg(\neg\varphi\land\neg\psi)$ and $\varphi\to\psi:=(\neg\varphi\lor\psi)$. \\
The logic $\mathsf{S}=\langle\mathrm{Fm}_{\mathsf{S}},\vdash_{\mathsf{S}}\rangle$ is the \emph{derivability} relation (to be defined as customary) induced by the axiom and inference rule schemes, for any $a\in Ag$, illustrated in Table~\ref{tab:assiomi}.
\begin{table}[t]
\begin{center}
\begin{tabular}{| p{2em} l|}
\hline & \\
A1 & All tautologies of classical propositional logic \\
A2 & $\star(\varphi\to\psi)\to(\star\varphi\to\star\varphi)$ \qquad for any $\star\in\{B_{a}, K_{a},I_{a}\}$ \\
A3 & $K_{a}\varphi\to\varphi$ \\
A4 & $K_{a}\varphi\to K_{a}K_{a}\varphi$ \\
A5 & $B_{a}\varphi\to\neg B_{a}\neg\varphi$ \\
A6 & $K_{a}\varphi\to B_{a}\varphi$ \\
A7 & $B_{a}\varphi\to K_{a}B_{a}\varphi$ \\
A8 & $I_{a}\varphi\to\neg I_{a}\neg\varphi$ \\
A9 & $I_{a}\varphi\to K_{a}I_{a}\varphi$ \\ 
A10 & $I_{a}\varphi\to I_{a}K_{a}\varphi$ \\ 
A11 & $I_{a}\varphi\to I_{a}I_{a}\varphi$ \\ & \\
\multicolumn{2}{|c|}{
    \hspace*{4mm}
    \AxiomC{$\varphi$}
    \AxiomC{$\varphi\to\psi$}\RightLabel{RMP}
    \BinaryInfC{$\psi$}
    \DisplayProof \qquad and \qquad \AxiomC{$\varphi$}\RightLabel{$\mathrm{RN}_{\star}$}
    \UnaryInfC{$\star\varphi$}
    \DisplayProof \qquad
    where $\star\in\{I_{a},K_{a},B_{a}\}$ \hspace*{4mm}
} 
\\ & \\
\hline
\end{tabular}
\end{center}
\caption{Axioms and rules of $\mathsf{S}$.}
\label{tab:assiomi}
\end{table}
The epistemic part of the axiomatization contains nothing new and is, therefore, standard. We confine ourselves to point out that, in order to keep our framework \emph{as minimal as possible}, we will not make use of negative introspection as \cite{Xiong2023} does for obtaining some of their results. Focusing on the intentionality operator $I$,
axiom A8 states that any agent $a$ is consistent with her intentions, while 
axioms A9 and A10 express the transparency and awareness conditions mentioned in the previous Section. As regards axiom A11, it might be regarded as a \emph{rationality} condition for intentionality. Having the intention to bring about a state of affairs in which $\varphi$ is true as long as the truth of $\varphi$ is, given the context of reference, preferable for the achievement of one's goals entails that such an intention is preserved in all states of affairs reachable through it. We remark that, having the intention to bring about a state of affairs in which, e.g., ``the door is closed'' is true does not mean that such an intention reflects an effective will of preserving or determining the truth of ``the door is closed'' indefinitely. It means that any agent $a$ who intends to bring about a state of affairs in which ``the door is closed'' is true as long as it is preferable w.r.t. her objectives has always also the intention to bring about a state of affairs in which $a$'s intention is preserved as long as it aligns with her goals. In other words, no agent having an intention that aligns with her goals may not have the intention to bring about a state of affairs in which her intention is preserved as long as it aligns with her goals.
Therefore, the operator $I$ should not be regarded as a \emph{tense} operator. Also, it is worth observing that $I$, like $B$, is not factive and, differently from $B$, is not implied by $K$\footnote{Note that there is no axiom that makes the intentional operator $I_a$ factive as is the case for $K_a$ (i.e., there is no axiom of the form $I_a\varphi\to\varphi$). This implies that intention does not collapse onto an intention of \emph{truth preservation} but it is rather to be meant as an intention of acting to bring about a state of affairs in which $\varphi$ is \emph{still} true, if it already does, and $\varphi$ \emph{becomes} true if it is not yet.}.

The axioms of $\mathsf{S}$ suggest that knowledge $K_a$ and intention $I_a$ are independent in their base functionality. Indeed, we assume that agents need not intend to bring about a state of affairs in which a given proposition $\varphi$ holds, if they know it does. This is motivated by counterexamples occurring in everyday life. If I know that the door is open, then this does not mean that I have the intention to bring about a state of affairs (e.g. by acting in some way) in which the door is still open. The motivation for not assuming the converse is clear. These axioms tell us that intending $\varphi$ implies knowing that the intention holds ($K_a I_a \varphi$), and it also implies that the intention includes knowledge ($I_a K_a \varphi$). Therefore, the system formalizes that if an agent intends $\varphi$, they are also aware of their own intention, and they intend to know $\varphi$ as well. This creates a relationship where intention presupposes a certain level of self-awareness and knowledge. Axioms A9 and A10 have the role of determining a rather strong notion of “intending to”. Indeed, we assume that agents are always aware (where aware is naively expressed as “knowing that”) of their intention. This is the content of A9. Moreover, we assume that intentions are “intentions of success” in the following sense. If an agent aims at bringing about a state of affairs in which $\varphi$ is true, then she intends to bring about a state of affairs in which she knows her intention has been successfully fulfilled. We consider “aware intention”, “intention to be successful in obtaining what it is intended to”, and “intention” as one and the same thing. Moreover, we observe that these assumptions bring with them conditions that might look quite strong at first sight. For example, it can be proven that $\vdash_{\mathsf{S}}\varphi\to\psi$ implies $\vdash_{\mathsf{S}}I_{a}\varphi\to I_{a}K_{a}\psi$. In other words, our agents are not only aware of their intentions, and they intend to be successful, but they are fully aware (in the above senses) of the material consequences of their intentions.  However, although their combination with axioms for normal modalities makes them bearer of (at first sight) quite strong consequences, the above assumptions seem reasonable when formalizing the kind of intention of concealment playing a key role in secrecy scenarios.

Let $A$ be a non-empty set. Recall that a binary relation $R\subseteq A\times A$ is said to be \emph{serial} provided that, for any $i\in A$, there exists $j\in A$ such that $R(i,j)$.

\begin{definition}\label{def:s-mod}An \emph{$\mathcal{S}$-frame} (a \emph{frame}, for short) is a tuple $$\mathcal{F}=(W,\{R^{I}_{a}\}_{a\in Ag},\{R^{K}_{a}\}_{a\in Ag},\{R^{B}_{a}\}_{a\in Ag})$$ such that:
\begin{enumerate}
    \item $W$ is a non-empty set of worlds (or states);
    \item $R^{B}_{a}\subseteq W\times W$ is serial;
    \item For any $a\in Ag$, $R^{I}_{a}\subseteq W\times W$ is serial and transitive;
    \item For any $a\in Ag$, $R^{K}_{a}\subseteq W\times W$ is reflexive and transitive;
    \item For any $i,j,w\in W$, and any $a\in Ag$, if $R^{K}_{a}(i,j)$ and $R^{I}_{a}(j,w)$, then $R^{I}_{a}(i,w)$.
    \item For any $a\in Ag$, $R^{B}_{a}\subseteq R^{K}_{a}$;
    \item For any $i,j,w\in W$ and $a\in Ag$, if $R^{K}_{a}(i,j)$ and $R^{B}_{a}(j,w)$, then $R^{B}_{a}(i,w)$;
    \item For any $i,j,w\in W$ and $a\in Ag$, $R^{I}_{a}(i,j)$ and $R^{K}_{a}(j,w)$ imply $R^{I}_{a}(i,w)$.
\end{enumerate}
\end{definition}
 
\begin{definition}
    An $\mathcal{S}$-model (a \emph{model}, in brief) is a tuple $$\mathcal{M}=(W,\{R^{I}_{a}\}_{a\in Ag},\{R^{K}_{a}\}_{a\in Ag},\{R^{B}_{a}\}_{a\in Ag},v)$$ such that
    \begin{enumerate}
        \item $(W,\{R^{I}_{a}\}_{a\in Ag},\{R^{K}_{a}\}_{a\in Ag},\{R^{B}_{a}\}_{a\in Ag})$ is an $\mathcal{S}$-frame, and
        \item $v: \mathit{Var}\to\mathcal{P}(W)$ is a mapping, called an \emph{evaluation}.
    \end{enumerate}
\end{definition}
As customary, given a model $\mathcal{M}=(W,\{R^{I}_{a}\}_{a\in Ag},\{R^{K}_{a}\}_{a\in Ag},\{R^{B}_{a}\}_{a\in Ag},v)$, $v$ can be regarded as a mapping assigning to any $p\in \mathit{Var}$ the set $v(p)$ of worlds in which $p$ is true. 

\begin{definition}
    For any model $\mathcal{M}$ and any $i\in W$, 
    the truth of $\varphi\in \mathrm{Fm}_{\mathsf{S}}$ at $i$ w.r.t.~$\mathcal{M}$, written $\mathcal{M},i\models\varphi$, is defined recursively as follows
\begin{itemize}
    \item $\mathcal{M},i\models p$ iff $i\in v(p)$;
    \item $\mathcal{M},i\models \neg\varphi$ iff $\mathcal{M},i\not\models \varphi$;
    \item $\mathcal{M},i\models \varphi\land\psi$ iff $\mathcal{M},i\models\varphi$ and $\mathcal{M},i\models \psi$;
    \item $\mathcal{M},i\models \star_{a}\varphi$ iff, for any $j\in W$, if $R^{\star}_{a}(i,j)$, then $\mathcal{M},j\models \varphi$, with $\star\in\{K,B,I\}$ and $a\in Ag$.
\end{itemize}
\end{definition}

\label{validitydef}Let $\varphi\in \mathrm{Fm}_{\mathsf{S}}$ and $\mathcal{M}$ be a model. We say that $\varphi$ is true in $\mathcal{M}$, written $\mathcal{M}\models\varphi$ provided that $\mathcal{M},i\models\varphi$, for any $i\in W$. Also, we say that $\varphi$ is $\mathcal{S}$-valid, written $\models_{\mathcal{S}}\varphi$, if $\mathcal{M}\models\varphi$, for any model $\mathcal{M}$.


\noindent {The following result is a direct consequence of the well-known Sahlqvist's completeness theorem (see \cite[Theorem 4.42]{blackburn2002modal}) whose application is made available by observing that the system we deal with in this paper is obtained upon extending $\mathsf{K}$ with Sahlqvist axioms (\cite[Definition 3.51]{blackburn2002modal}).}

\begin{theorem}\label{compl-sound}For any $\varphi\in\mathrm{Fm}_{\mathsf{S}}$:
\[\vdash_{\mathsf{S}}\varphi\text{ iff }\models_{\mathcal{S}}\varphi.\]
\end{theorem}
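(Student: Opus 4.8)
The plan is to invoke the Sahlqvist completeness theorem directly, as the preceding paragraph already suggests, and the work amounts to verifying that the hypotheses of that theorem are met. First I would recall that $\mathsf{S}$ is obtained by extending the minimal multi-modal logic $\mathsf{K}$ (with the normal box operators $K_a$, $B_a$, $I_a$ for each $a\in Ag$, all closed under their necessitation rules $\mathrm{RN}_\star$ and governed by the distribution axioms A2) by the additional axiom schemes A3--A11. So it suffices to check that each of A3--A11 is a Sahlqvist formula in the sense of \cite[Definition 3.51]{blackburn2002modal}; once this is established, \cite[Theorem 4.42]{blackburn2002modal} gives that $\mathsf{S}$ is sound and strongly complete with respect to the class of frames defined by the first-order correspondents of those axioms, and canonicity comes for free.

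Next I would go through the axioms one at a time and identify them as instances of well-known Sahlqvist shapes. Axioms A3 ($K_a\varphi\to\varphi$), A4 ($K_a\varphi\to K_aK_a\varphi$), A7 ($B_a\varphi\to K_aB_a\varphi$), A9 ($I_a\varphi\to K_aI_a\varphi$), A10 ($I_a\varphi\to I_aK_a\varphi$) and A11 ($I_a\varphi\to I_aI_a\varphi$) are all of the form $\Box\varphi\to\Box_1\cdots\Box_n\varphi$ with a single positive occurrence of $\varphi$ and no diamonds in the antecedent, hence Sahlqvist; their first-order correspondents are, respectively, reflexivity of $R^K_a$, transitivity of $R^K_a$, the inclusion-composition condition linking $R^K_a$ and $R^B_a$ in item 7, the composition condition linking $R^K_a$ and $R^I_a$ in item 5, the composition condition of item 8, and transitivity of $R^I_a$. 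Axiom A6 ($K_a\varphi\to B_a\varphi$) has the same shape (boxed antecedent, positively-occurring $\varphi$) and corresponds to $R^B_a\subseteq R^K_a$, item 6. Axioms A5 ($B_a\varphi\to\neg B_a\neg\varphi$, i.e.\ $B_a\varphi\to\Diamond^B_a\varphi$) and A8 ($I_a\varphi\to\neg I_a\neg\varphi$, i.e.\ $I_a\varphi\to\Diamond^I_a\varphi$) are the familiar $\mathbf{D}$ axioms, manifestly Sahlqvist, corresponding to seriality of $R^B_a$ and of $R^I_a$ respectively. Collecting these correspondents reproduces exactly the defining conditions 1--8 of an $\mathcal{S}$-frame in Definition \ref{def:s-mod}, so the class of $\mathcal{S}$-frames is precisely the Sahlqvist-definable class associated with $\mathsf{S}$; this yields both directions of the biconditional $\vdash_{\mathsf{S}}\varphi$ iff $\models_{\mathcal{S}}\varphi$.

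Strictly speaking, one should also note that the standard Sahlqvist machinery is stated for the basic modal similarity type and transfers without change to the finite multi-modal setting with finitely many agents and three modalities per agent; this is routine and explicitly covered in \cite{blackburn2002modal}. I do not anticipate a genuine obstacle here: the only mildly delicate point is making sure each axiom really does fall under the syntactic definition of a Sahlqvist formula --- in particular that the boxed-atom antecedents contain no diamonds and that every antecedent atom occurs positively in the consequent --- and that the computed first-order correspondents agree on the nose with the frame conditions listed in Definition \ref{def:s-mod}. Since all of A3--A11 are among the most classical examples of Sahlqvist axioms (T, 4, D, inclusion, and composition axioms), this verification is entirely mechanical, and the theorem follows.
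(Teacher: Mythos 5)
Your proposal is correct and follows exactly the route the paper takes: the paper's entire proof is the remark that $\mathsf{S}$ extends $\mathsf{K}$ by Sahlqvist axioms, so soundness, completeness, and canonicity follow from \cite[Theorem 4.42]{blackburn2002modal}. You merely spell out the axiom-by-axiom verification and the matching of first-order correspondents to the conditions of Definition \ref{def:s-mod}, all of which checks out.
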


As there are non-trivial $\mathcal{S}$-models (see, e.g., Proposition \ref{ex:notrivializ}), Theorem \ref{compl-sound} shows that our system is \emph{consistent}. Also, its proof yields that $\mathsf{S}$ is \emph{canonical} (cf. \cite[p. 56]{Hughes1}). Consequently, one has that $\mathsf{S}$ is \emph{compact} (see \cite[p.104]{Hughes1}).

Given the above discourse, through the next sections, $\models_{\mathcal{S}}$ and $\vdash_{\mathsf{S}}$ will be regarded as interchangeable. Moreover, it is worth noticing that our formal system does not cause undesired collapses of modalities we are dealing with in this paper. Let $A$ be a non-empty set. We set $\Delta_{A}:=\{(i,i):i\in A\}.$
\begin{proposition}\label{ex:notrivializ}Let $p\in \mathit{Var}$. Then:
\begin{enumerate}
    \item $\not\vds I_{a}p\to B_{a}p$;
    \item $\not\vds I_{a}p\to K_{a}p$;
    \item $\not\vds B_{a}p\to K_{a}p$.
\end{enumerate}
\end{proposition}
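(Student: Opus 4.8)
Each of the three non-derivabilities is a separability claim that, by the completeness theorem (Theorem~\ref{compl-sound}), reduces to exhibiting an $\mathcal{S}$-model together with a world at which the relevant implication fails. So the plan is, in each case, to construct a small finite frame satisfying all eight conditions of Definition~\ref{def:s-mod}, choose an evaluation for $p$, and verify that the antecedent modal formula holds at some world $i$ while the consequent fails there. The only genuine work is bookkeeping: making sure the frame-conditions (seriality of $R^{B}_{a}$ and $R^{I}_{a}$, reflexivity and transitivity of $R^{K}_{a}$, the inclusion $R^{B}_{a}\subseteq R^{K}_{a}$, and the three ``mixing'' closure conditions~5, 7, 8) are all met by the handcrafted relations, since a finite frame makes transitivity and these closure conditions checkable by inspection.

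For item~3, $\not\vds B_{a}p\to K_{a}p$, I would take $W=\{i,j\}$, put $R^{K}_{a}=\Delta_{W}$ (reflexive, transitive, and a subrelation closure-condition~7 holds trivially since $R^{K}_{a}$ is the diagonal), and set $R^{B}_{a}=\{(i,j),(j,j)\}$. Seriality of $R^{B}_{a}$ is clear, and $R^{B}_{a}\subseteq R^{K}_{a}$ \emph{fails} — so instead I must enlarge $R^{K}_{a}$ to contain $R^{B}_{a}$, i.e. take $R^{K}_{a}=\Delta_{W}\cup\{(i,j)\}$, which is still reflexive and transitive; condition~7 ($R^{K}_{a}$ then $R^{B}_{a}$ gives $R^{B}_{a}$) is checked directly. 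For the remaining agents $a'\ne a$ and all of $R^{I}$, put everything equal to $\Delta_{W}$. Let $v(p)=\{j\}$. Then $R^{B}_{a}(i,\cdot)$ reaches only $j\in v(p)$, so $\mathcal{M},i\models B_{a}p$, whereas $R^{K}_{a}(i,i)$ and $i\notin v(p)$, so $\mathcal{M},i\not\models K_{a}p$; hence $B_{a}p\to K_{a}p$ is refuted.

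For items~1 and~2 the strategy is analogous but one must be careful that $R^{I}_{a}\subseteq R^{K}_{a}$ is \emph{not} required, so $R^{I}_{a}$ may point to a $p$-world while $R^{K}_{a}$ (and $R^{B}_{a}$) stays on the diagonal. Concretely, on $W=\{i,j\}$ with $v(p)=\{j\}$, set $R^{I}_{a}=\{(i,j),(j,j)\}$ (serial and transitive), $R^{K}_{a}=R^{B}_{a}=\Delta_{W}$, and all other agents' relations diagonal. Then $\mathcal{M},i\models I_{a}p$ but $\mathcal{M},i\not\models K_{a}p$ and $\mathcal{M},i\not\models B_{a}p$, settling both 1 and 2 with a single model. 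The one subtlety to double-check is closure condition~5 ($R^{K}_{a}(i,k)$ and $R^{I}_{a}(k,w)$ imply $R^{I}_{a}(i,w)$) and condition~8 ($R^{I}_{a}(i,k)$ and $R^{K}_{a}(k,w)$ imply $R^{I}_{a}(i,w)$): since $R^{K}_{a}$ is the diagonal, both reduce to $R^{I}_{a}(i,w)\Rightarrow R^{I}_{a}(i,w)$, trivially true. I expect the main (minor) obstacle to be exactly this simultaneous satisfaction of the mixing conditions together with $R^{B}_{a}\subseteq R^{K}_{a}$ in item~3 — it forces $R^{K}_{a}$ to be slightly larger than the diagonal — but on a two-element frame it is routine to verify, and if one prefers one can instead keep $W=\{i,j\}$ with $R^{K}_{a}$ the full relation $W\times W$, which is reflexive, transitive, contains any serial $R^{B}_{a}$, and makes conditions~7, 5, 8 immediate.
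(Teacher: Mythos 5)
Your countermodel for items~1 and~2 is exactly the paper's: $W=\{i,j\}$, $R^{K}_{a}=R^{B}_{a}=\Delta_{W}$, $R^{I}_{a}=\{(i,j),(j,j)\}$, $v(p)=\{j\}$, and your check of the mixing conditions there is correct, since $R^{K}_{a}$ is the diagonal. (The paper leaves item~3 as an exercise, so only your items~1--2 can be compared directly; they coincide.)

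Your item~3 construction, however, has a genuine flaw. You correctly notice that $R^{B}_{a}\subseteq R^{K}_{a}$ forces $R^{K}_{a}=\Delta_{W}\cup\{(i,j)\}$, but you then keep $R^{I}_{a}=\Delta_{W}$, and the remark that conditions~5 and~8 ``reduce to $R^{I}_{a}(i,w)\Rightarrow R^{I}_{a}(i,w)$'' no longer applies once $R^{K}_{a}$ is strictly larger than the diagonal. Concretely, condition~5 with the triple $(i,j,j)$ gives $R^{K}_{a}(i,j)$ and $R^{I}_{a}(j,j)$, hence demands $R^{I}_{a}(i,j)$, which $\Delta_{W}$ lacks; condition~8 with $(i,i,j)$ gives $R^{I}_{a}(i,i)$ and $R^{K}_{a}(i,j)$ and likewise demands $R^{I}_{a}(i,j)$. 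So the structure is not an $\mathcal{S}$-frame. Your fallback of taking $R^{K}_{a}=W\times W$ makes things worse, not ``immediate'': with $R^{K}_{a}$ total, conditions~5 and~8 force every world to have the same set of $I_{a}$-successors, which $\Delta_{W}$ violates. The repair is easy --- set $R^{I}_{a}:=R^{B}_{a}=\{(i,j),(j,j)\}$ (serial, transitive, and then conditions~5 and~8 hold because every world's unique $I_{a}$-successor is $j$ and $R^{K}_{a}$-successors of $j$ are only $j$) --- but as written the item~3 model does not validate the frame conditions you set out to verify.
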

\begin{proof}Let us consider just (1) and (2), leaving the remaining item as a simple exercise for the reader. Let $$\mathcal{M}=(W=\{i,j\},\{R^{I}_{a}\}_{a\in Ag},\{R^{K}_{a}\}_{a\in Ag},\{R^{B}_{a}\}_{a\in Ag},v)$$ be such that, for any $a\in Ag$:
\begin{itemize}
    \item $R^{K}_{a}=R^{B}_{a}:=\Delta_{W}$; and
    \item $R^{I}_{a}:=\{(i,j),(j,j)\}.$
\end{itemize}
A routine check shows that $\mathcal{M}$ is an $\mathcal{S}$-frame. Moreover, let $v:\mathit{Var}\to\mathcal{P}(W)$ be such that for some fixed $p\in \mathit{Var}$, $v(p)=\{j\}$. Upon extending $v$ to an evaluation on the whole $\mathrm{Fm}_{\mathsf{S}}$, one has $\mathcal{M},i\not\models K_{a}p\lor B_{a}p$ but $\mathcal{M},i\models I_{a}p$. \qed
\end{proof}  

Of course, it naturally raises the question as to whether it is possible to prove that $\mathsf{S}$ is decidable. Indeed, a positive answer can be easily obtained by means of a quite standard filtration technique. To make this work available also to non-specialist readers, we provide a decidability proof and its  auxiliary lemmas in Appendix A.

\begin{theorem}\label{thm:dec} $\mathsf{S}$ is decidable.
\end{theorem}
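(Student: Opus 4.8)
The plan is to establish the \emph{finite model property} (FMP) for $\mathsf{S}$ with an effective bound on the size of counter-models, and then read off decidability from it. First I would record the two easy ingredients: since $\mathsf{S}$ has finitely many axiom schemes and rules over the fixed finite set $Ag$, the set $\{\varphi : \vdash_{\mathsf{S}}\varphi\}$ is recursively enumerable; and since the defining conditions 1--8 of Definition~\ref{def:s-mod} are first-order sentences universally quantified over $W$, membership of a finite structure in the class of $\mathcal{S}$-frames is decidable, as is truth of a formula at a world of a finite model. Granting the FMP in the effective form ``every $\mathsf{S}$-consistent $\varphi$ is satisfied in an $\mathcal{S}$-model with at most $g(\varphi)$ worlds, for a computable $g$'', decidability follows: by Theorem~\ref{compl-sound}, $\not\vdash_{\mathsf{S}}\varphi$ iff $\neg\varphi$ is satisfied in some $\mathcal{S}$-model, iff $\neg\varphi$ is satisfied in some $\mathcal{S}$-model of size $\le g(\neg\varphi)$ --- and the latter is a decidable condition.

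To prove the FMP I would use filtration. Fix an $\mathsf{S}$-consistent $\varphi$ and, via Theorem~\ref{compl-sound}, an $\mathcal{S}$-model $\mathcal{M}=(W,\{R^{I}_{a}\},\{R^{K}_{a}\},\{R^{B}_{a}\},v)$ with $\mathcal{M},w_{0}\models\varphi$ for some $w_{0}$. Let $\Sigma$ be the least set of formulas that contains all subformulas of $\varphi$, is closed under single negations, and satisfies: if $I_{a}\psi\in\Sigma$ then $K_{a}\psi,K_{a}I_{a}\psi\in\Sigma$, and if $B_{a}\psi\in\Sigma$ then $K_{a}B_{a}\psi\in\Sigma$. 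I would check $\Sigma$ is finite --- the extra closure rules only prefix already-present $B$- or $I$-modalised subformulas by $K_{a}$, and the newly created formulas are $K_{a}$-headed, so the rules no longer fire on them. Put $w\equiv w'$ iff $\mathcal{M},w\models\psi \Leftrightarrow \mathcal{M},w'\models\psi$ for all $\psi\in\Sigma$, $W'=W/{\equiv}$ (whence $|W'|\le 2^{|\Sigma|}$, computable in $\varphi$), and $v'(p)=\{[w] : \mathcal{M},w\models p\}$ for the variables $p$ of $\varphi$.

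Next I would define the filtrated relations by ``box propagation'', which is needed both to recover transitivity and to preserve the interaction conditions that the naive minimal filtration destroys: $\bar R^{K}_{a}([w],[w'])$ iff $\mathcal{M},w\models K_{a}\xi$ implies $\mathcal{M},w'\models K_{a}\xi$ for all $K_{a}\xi\in\Sigma$; $\bar R^{I}_{a}([w],[w'])$ iff $\mathcal{M},w\models I_{a}\xi$ implies $\mathcal{M},w'\models I_{a}\xi\wedge K_{a}\xi$ for all $I_{a}\xi\in\Sigma$; and $\bar R^{B}_{a}([w],[w'])$ iff $\bar R^{K}_{a}([w],[w'])$ and $\mathcal{M},w\models B_{a}\xi$ implies $\mathcal{M},w'\models \xi\wedge B_{a}\xi$ for all $B_{a}\xi\in\Sigma$. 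Using A3--A4 one gets that $\bar R^{K}_{a}$ is a reflexive, transitive filtration of $R^{K}_{a}$; using A8, A10, A11 (A10 justifies the ``$K_{a}\xi$'' clause, A11 the transitivity) that $\bar R^{I}_{a}$ is a serial, transitive filtration of $R^{I}_{a}$; and using A5--A7 that $\bar R^{B}_{a}$ is a serial filtration of $R^{B}_{a}$ with $\bar R^{B}_{a}\subseteq\bar R^{K}_{a}$. The standard Filtration Lemma (induction on $\psi\in\Sigma$) then yields $\mathcal{M}',[w]\models\psi \Leftrightarrow \mathcal{M},w\models\psi$ for $\psi\in\Sigma$, in particular $\mathcal{M}',[w_{0}]\models\varphi$. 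Finally I would verify that the frame of $\mathcal{M}'$ meets conditions 1--8: seriality and reflexivity persist because a filtration always relates $[w]$ to $[w']$ whenever $R^{\star}_{a}(w,w')$; transitivity of $\bar R^{K}_{a},\bar R^{I}_{a}$ and $\bar R^{B}_{a}\subseteq\bar R^{K}_{a}$ are built in; and conditions 5, 7, 8 follow from transitivity of $\bar R^{K}_{a}$ together with the closure of $\Sigma$ --- e.g.\ for condition 8, from $\bar R^{I}_{a}([i],[j])$, $\bar R^{K}_{a}([j],[w])$ and $\mathcal{M},i\models I_{a}\chi$ (with $I_{a}\chi\in\Sigma$) one gets $\mathcal{M},j\models I_{a}\chi\wedge K_{a}\chi$, hence (A9) $\mathcal{M},j\models K_{a}I_{a}\chi$, and since $K_{a}\chi,K_{a}I_{a}\chi\in\Sigma$, propagation along $\bar R^{K}_{a}$ gives $\mathcal{M},w\models K_{a}\chi\wedge I_{a}\chi$, i.e.\ $\bar R^{I}_{a}([i],[w])$; conditions 5 and 7 are symmetric via A9 and A7.

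The crux --- and the step I expect to be the main obstacle --- is balancing two opposing demands on the filtrated relations: they must be large enough to be transitive and to honour the cross-modal conditions 5, 7, 8, yet small enough that the ``hard'' direction of the Filtration Lemma still goes through. This is precisely what forces both the ``$K_{a}\xi$''-clauses in the definitions of $\bar R^{I}_{a},\bar R^{B}_{a}$ and the enlargement of $\Sigma$ by the bounded modal-prefix closure above, whose finiteness in turn rests on the $\mathsf{S}$-provable collapses of iterated prefixes ($K_{a}K_{a}\psi\leftrightarrow K_{a}\psi$, $I_{a}K_{a}\psi\leftrightarrow I_{a}\psi$, etc., all immediate from A3 and A9--A11). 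Everything else is routine bookkeeping.
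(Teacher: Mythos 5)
Your proposal is correct and follows the same skeleton as the paper's proof: reduce decidability to the finite model property via Theorem~\ref{compl-sound}, and obtain the FMP by a filtration whose relations are defined by ``box propagation'' so that transitivity and the cross-modal frame conditions survive. The one genuine difference is the bookkeeping device used to make conditions 5, 7 and 8 of Definition~\ref{def:s-mod} go through. The paper first rewrites the target formula with the translation $t$ sending $I_{a}\psi$ to $I_{a}K_{a}t(\psi)$ (justified by $\vds I_{a}\varphi\leftrightarrow I_{a}K_{a}\varphi$) and filters through $t(\varphi)$, so that the body of every $I_{a}$-formula in the closure set is itself a $K_{a}$-formula and hence propagates along $\overline{R}^{K}_{a}$; it also builds $I_{a}$- and $B_{a}$-propagation directly into the definition of $\overline{R}^{K}_{a}$. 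You instead leave the formula alone, close $\Sigma$ under adding $K_{a}\psi$, $K_{a}I_{a}\psi$ and $K_{a}B_{a}\psi$, keep $\bar R^{K}_{a}$ as pure $K_{a}$-propagation, and recover the needed transfers via A9 and A7 together with factivity. The two devices are interchangeable and solve exactly the problem the paper flags at the end of its appendix (why one must filter through $t(\varphi)$ rather than $\varphi$); yours arguably keeps the filtration lemma statement cleaner at the cost of a slightly larger closure set. One small quibble: the finiteness of $\Sigma$ is a purely syntactic matter (your closure rules only produce $K_{a}$-headed formulas on which they no longer fire, as you correctly say earlier), so it does not ``rest on'' the provable collapses $K_{a}K_{a}\psi\leftrightarrow K_{a}\psi$ and $I_{a}K_{a}\psi\leftrightarrow I_{a}\psi$; those are relevant to the paper's translation, not to your termination argument.
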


{As the reader may observe, we choose to formalize the keeping of a true secret within the formal system $S4$ and $KD4$ instead of $S5$ and $KD45$. This choice, although not a standard (see section 6), can be justified precisely considering the object of the formalization, i.e. the keeping of true secrets. The decision reflects a desire to capture agents with epistemic limitations. By choosing $S4$ for knowledge, we forgo negative introspection, acknowledging that agents may not always know whether they lack knowledge, which is a more realistic portrayal of human cognition. Similarly, using $KD4$ for belief excludes negative introspection, allowing for agents who are uncertain about their own beliefs. This approach aligns with scenarios where agents are not ideal reasoners, thus providing a more flexible and nuanced representation of both knowledge and belief, without violating formal consistency.
This framework becomes particularly relevant when modeling the intention of keeping a secret. A secret involves intentionally limiting knowledge or belief to certain agents, and the absence of perfect introspection helps to account for scenarios where an agent may be unaware of their knowledge gaps or belief states, thus better reflecting real-world scenarios of secrecy, where both cognitive and informational boundaries are imperfect or deliberately managed.}

\section{Some properties of true secrets}\label{sec some properties of true}

In this section, we investigate some basic properties of the operator $S_{a,b}$. As it will be clear from the sequel, many properties of the Xiong-{\AA}gotnes operator for ``having a secret'' still hold in our framework, regardless of the addition of the intention operator $I$. In the sequel, we exhibit only the proofs of statements in which properties of the intentionality operator play a key role or which are not explicitly mentioned in \cite{Xiong2023} but we believe are somehow bearer of conceptual insights,  leaving the remaining ones to the reader.

We start by discussing some basic properties bridging our idea behind the intention of secrecy and the notion of knowing a secret of~\cite{Xiong2023}. Note that, due to results stated below, our operator satisfies the closure under equivalence rule: $\text{if} \vds\varphi\leftrightarrow\psi, \text{then} \vds S_{a,b}\varphi\leftrightarrow S_{a,b}\psi$ (see item 28 from Table \ref{tab:agot}),  the impossibility of keeping validities secret: if $\vds \varphi$, then $\vds \neg S_{a,b}\varphi$ and $\vds\neg S_{a,b}\neg\varphi$ (cf. item 21, 22 from Table \ref{tab:agot}), as well as the non-existence of tautological intentions of secrecy: $\not\vds\sab\varphi$ (for any $\varphi$).

\begin{proposition}\label{prop:propertiessab}Let $\varphi\in\mathrm{Fm}_{\mathsf{S}}$ and $a,b\in Ag$ with $a\ne b$. The following hold:
\begin{enumerate}
\item $\vds \neg S_{a,a}\varphi$;
\item $\vds \neg S_{a,b}\top$ and $\vds \neg S_{a,b}\bot$;
\item $\vds S_{a,b}\varphi\to\varphi$;
\item $\vds \sab\varphi\to K_{a}\sab\varphi$;
\item $\vds S_{a,b}\varphi\leftrightarrow S_{a,b}S_{a,b}\varphi$;
\item $\vds\neg K_{b}\varphi\to\neg K_{b}S_{a,b}\varphi$;
\item $\vds\sab\varphi\to\neg\sab\neg\varphi$;
\item $\vds\sab\varphi\to\sab K_{a}\varphi$;
\item $\vds\neg\sab S_{b, a}\varphi$.
\end{enumerate}
\end{proposition}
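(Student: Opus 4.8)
The overall approach is to treat each conjunct of $S_{a,b}\varphi = K_a\varphi \wedge B_a\neg K_b\varphi \wedge I_a(\varphi\wedge\neg K_b\varphi)$ as an ordinary normal-modal formula and to deploy the standard derived apparatus that A1, A2 and the rules $\mathrm{RN}_\star$ make available uniformly for $K_a$, $B_a$, $I_a$: \emph{monotonicity} (from $\vds\chi\to\theta$ infer $\vds\star\chi\to\star\theta$), \emph{distribution} of $\star$ over conjunction, and \emph{agglomeration} $\star\chi_1\wedge\cdots\wedge\star\chi_n\to\star(\chi_1\wedge\cdots\wedge\chi_n)$. I will also use repeatedly the ``subsumption'' facts recorded in Section~\ref{sect:motiv}: from $I_a(\varphi\wedge\neg K_b\varphi)$ one extracts $I_a\varphi$ and $I_a\neg K_b\varphi$ by distribution, and then $I_aK_a\varphi$ by A10 and $I_aI_a(\varphi\wedge\neg K_b\varphi)$ by A11. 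Several items are then routine: (3) is the first conjunct plus A3; (7) follows because the third conjuncts of $S_{a,b}\varphi$ and $S_{a,b}\neg\varphi$ yield $I_a\varphi$ and $I_a\neg\varphi$, which clash by A8 (or, more cheaply, $K_a\varphi$ and $K_a\neg\varphi$ via A3); (1) holds because $K_a\varphi$ gives $B_aK_a\varphi$ via A4 and A6, contradicting the second conjunct $B_a\neg K_a\varphi$ by A5; (2) because $K_a\bot$ contradicts A3 while $\vds K_b\top$ gives $\vds B_aK_b\top$ and hence $\vds\neg B_a\neg K_b\top$ by A5; and (6), in the equivalent form $K_bS_{a,b}\varphi\to K_b\varphi$, is obtained by pushing $\vds S_{a,b}\varphi\to\varphi$ through $K_b$ by monotonicity.

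Items (4) and (8) need a little care with the intention operator but stay direct. For (4) I would apply A4, A7, A9 to the three conjuncts respectively and agglomerate under $K_a$. For (8) the valid implication $\neg K_b\varphi\to\neg K_bK_a\varphi$ (from A3 for agent $b$ plus monotonicity) is the workhorse: pushing it through $B_a$ upgrades $B_a\neg K_b\varphi$ to $B_a\neg K_bK_a\varphi$, and pushing it through $I_a$ upgrades $I_a\neg K_b\varphi$ to $I_a\neg K_bK_a\varphi$; together with $K_aK_a\varphi$ (A4) and $I_aK_a\varphi$ (A10 applied to $I_a\varphi$), agglomerating the last two under $I_a$, this assembles $S_{a,b}K_a\varphi$.

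The two delicate items are (5) and (9). For (5), the direction $S_{a,b}S_{a,b}\varphi\to S_{a,b}\varphi$ is just (3) instantiated at $S_{a,b}\varphi$. For the converse I unfold $S_{a,b}S_{a,b}\varphi = K_aS_{a,b}\varphi\wedge B_a\neg K_bS_{a,b}\varphi\wedge I_a(S_{a,b}\varphi\wedge\neg K_bS_{a,b}\varphi)$: the first conjunct is exactly (4), the second follows by pushing the implication $\neg K_b\varphi\to\neg K_bS_{a,b}\varphi$ of (6) through $B_a$ and using the second conjunct $B_a\neg K_b\varphi$. The crux is the third conjunct, where one must resist distributing $I_a$ over the definitional conjunction of $S_{a,b}\varphi$ ``for free''; instead, from the single hypothesis $I_a(\varphi\wedge\neg K_b\varphi)$ I derive (i) $I_aK_a\varphi$ by A10; (ii) $I_aB_a\neg K_b\varphi$, by A10 applied to $I_a\neg K_b\varphi$ and then pushing A6's $K_a\neg K_b\varphi\to B_a\neg K_b\varphi$ through $I_a$; (iii) $I_aI_a(\varphi\wedge\neg K_b\varphi)$ by A11; and (iv) $I_a\neg K_bS_{a,b}\varphi$ by pushing (6) through $I_a$. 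Agglomerating (i)--(iv) under $I_a$ yields $I_a\big((K_a\varphi\wedge B_a\neg K_b\varphi\wedge I_a(\varphi\wedge\neg K_b\varphi))\wedge\neg K_bS_{a,b}\varphi\big)$, i.e.\ $I_a(S_{a,b}\varphi\wedge\neg K_bS_{a,b}\varphi)$, closing (5). Finally, for (9) it suffices to refute $S_{a,b}S_{b,a}\varphi$: its first conjunct $K_aS_{b,a}\varphi$ gives $B_aS_{b,a}\varphi$ by A6, while from $\vds S_{b,a}\varphi\to K_bS_{b,a}\varphi$ (the argument of (4) with the roles of $a,b$ swapped) the second conjunct $B_a\neg K_bS_{b,a}\varphi$ upgrades, via monotonicity through $B_a$, to $B_a\neg S_{b,a}\varphi$, and A5 forbids holding both $B_aS_{b,a}\varphi$ and $B_a\neg S_{b,a}\varphi$. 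I expect the main obstacle to be the third conjunct of the forward direction of (5): it is the only place where the combined force of the awareness axiom A10, the transitivity axiom A11, the knowledge-to-belief bridge A6 and item (6) is genuinely required, and some care is needed to re-agglomerate the four $I_a$-formulas into an intention whose argument is literally $S_{a,b}\varphi\wedge\neg K_bS_{a,b}\varphi$.
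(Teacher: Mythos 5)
Your proposal is correct and follows essentially the same route as the paper: item (4) via A4, A7, A9 and agglomeration under $K_a$; item (5)'s forward direction by assembling $K_a\sab\varphi$, $B_a\neg K_b\sab\varphi$, $I_a\neg K_b\sab\varphi$ and $I_a\sab\varphi$ (the last exactly as in the paper's derivation using A10, A6 and A11) before re-agglomerating under $I_a$; and item (8) via the implication $\neg K_b\varphi\to\neg K_bK_a\varphi$ pushed through $B_a$ and $I_a$ — the paper merely phrases these arguments semantically in an arbitrary $\mathcal{S}$-model, which it declares interchangeable with $\vds$. The only quibble is a label: the workhorse $\neg K_b\varphi\to\neg K_bK_a\varphi$ in (8) comes from A3 for agent $a$ (factivity of $K_a$) pushed through $K_b$ by monotonicity, not from A3 for agent $b$.
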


\begin{proof}
Throughout the proof, $\mathcal{M}=(W,\{R^{I}_{a}\}_{a\in Ag},\{R^{K}_{a}\}_{a\in Ag},\{R^{B}_{a}\}_{a\in Ag},v)$ is an  arbitrary $\mathcal{S}$-model, and $i\in W$.

\begin{enumerate}

\item[4.] By (A4), (A7) and (A9), and the distributivity of $K_{a}$ over $\land$.

\item[5.] $\vds \sab(\sab\varphi)\to\sab\varphi$ follows by item (3). To prove the converse direction, assume $\mathcal{M},i\models\sab\varphi$. By item (4), one has $\mathcal{M},i\models K_{a}\sab\varphi$. Moreover, suppose towards a contradiction that $\mathcal{M},i\not\models B_{a}\neg K_{b}\sab\varphi$. So, there is $j\in W$, such that $R^{B}_{a}(i,j)$ and $\mathcal{M},j\models K_{b}\sab\varphi$. By the definition of $\sab\varphi$ and the distributivity of $K_{b}$ over $\land$, one has $\mathcal{M},j\models K_{b}K_{a}\varphi$. Moreover, since it holds that $\vds K_b K_a \varphi \to K_b \varphi$, we have $\mathcal{M},j\models K_{b}\varphi$. But then $\mathcal{M},i\not\models B_{a}\neg K_{b}\varphi$, a contradiction. We conclude that $\mathcal{M},i\models B_{a}\neg K_{b}\sab\varphi$. Similarly, we prove that  $\mathcal{M},i \models I_{a}\neg K_{b}\sab\varphi$. Finally, $\mathcal{M},i\models I_{a}\sab\varphi$ follows by item (5). We conclude that $\mathcal{M},i\models K_{a}\sab\varphi\land B_{a}\neg K_{b}\sab\varphi\land I_{a}\neg K_{b}\sab\varphi\land I_{a}\sab\varphi$. The desired conclusion follows the distributivity of $I_{a}$ over $\land$. 
\item[8.] Assume $\mathcal{M},i\models \sab\varphi$. From $\mathcal{M},i\models K_{a}\varphi$, one has $\mathcal{M},i\models K_{a}K_{a}\varphi$, by (A4). 
Also, from $\mathcal{M},i\models K_{b}K_{a}\varphi\to K_{b}\varphi$, we obtain $\mathcal{M},i\models\neg K_{b}\varphi\to\neg K_{b}K_{a}\varphi$ and, by a customary reasoning, also $\mathcal{M},i\models B_{a}\neg K_{b}\varphi\to B_{a}\neg K_{b}K_{a}\varphi$. 
Consequently, we have $\mathcal{M},i\models B_{a}\neg K_{b}K_{a}\varphi$. $\mathcal{M},i\models I_{a}\neg K_{b}K_{a}\varphi$ can be proven similarly. Finally, note that $\mathcal{M},i\models I_{a}\varphi$ yields $\mathcal{M},i\models I_{a}K_{a}\varphi$ by (A10), and so $\mathcal{M},i\models I_{a}(\neg K_{b}K_{a}\varphi\land K_{a}\varphi)$.
\qed
\end{enumerate}
\end{proof}

We notice that all items are either proven explicitly or are easy to obtain in \cite{Xiong2023}. 
Item (1) means that an agent cannot keep a statement secret to herself.
Similarly, item (2) specifies that tautologies and contradictions cannot be kept secret from any agent. Item (2) corresponds to item 14 and item 13 from~Table \ref{tab:agot}.
Item (3) emphasizes what we meant by \textit{true secret} in Section~\ref{sect:motiv}. It corresponds to item 12 from Table \ref{tab:agot}.
Item (4) states a principle of awareness of secret: if agent $a$ keeps a proposition $\varphi$ secret from $b$, then $a$ knows to be in such a position. Hence, the operator $S$ rules out unconscious omissions of information. It corresponds to item 7 in Table~\ref{tab:agot}.
Item (5) suggests that keeping something secret is equivalent to keeping secret its secrecy. It corresponds to item 15 in Table \ref{tab:agot}. At the same time, item (6) is a direct consequence of the notion of a true secret: no agent can know the secrecy of a given proposition $\varphi$ without knowing that $\varphi$ is true. Note that we do not have in general, $\vds\neg K_{b}S_{a,b}\varphi$ (cf. item 2 from Table \ref{tab:agot}), since for us it is reasonable that agent $b$ can come to know that agent $a$ is trying to keep the information $\varphi$ secret (although, unsuccessfully in this case).
Item (7) establishes the consistency of the notion of secrecy.
Item (8) is a consequence of the factivity of knowledge, because of which 
keeping $\varphi$ secret from agent $b$ requires keeping $K_a \varphi$ secret from agent $b$. Item (7) corresponds to item 11 from Table \ref{tab:agot}, while item (8) can be derived within the Xiong-{\AA}gotnes framework, e.g., upon assuming the S5 axioms. Finally, item (9), establishes that it is not possible for an agent $a$ to keep secret from $b$ the fact that $b$ is keeping some information secret from $a$. This corresponds to item 9 from Table~\ref{tab:agot}. Interestingly, differently from \cite{Xiong2023}, for us it is possible for an agent $a$ to keep secret from an agent $b$ that $b$ is not keeping secret some information from $a$, due to the lack of negative introspection of our agents (formally: $\not\vds\neg \sab\neg S_{b,a}\varphi$, when $a\ne b$). In addition, as it will be made clear soon (cf. proposition \ref{prop:notperfsecret}) our secrets are not exclusive, since two agents could keep an information secret from each other, not realizing that they both know the information.

The following proposition states some specific properties of the intention modality in combination with the operator $S_{a,b}$.

\begin{proposition}\label{prop:propertiessab2}Let $\varphi\in\mathrm{Fm}_{\mathsf{S}}$ and $a,b\in Ag$ with $a\ne b$. The following hold:
\begin{enumerate}
\item $\vds\sab\varphi\to I_{a}\sab\varphi;$
\item $\vds\neg S_{a,b}K_{b}\varphi\land\neg\sab I_{b}\varphi\land\neg\sab B_{b}\varphi$;
\item $\vds \neg I_{a}K_{b}\sab\varphi$.
\end{enumerate}
\end{proposition}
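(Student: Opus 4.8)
Throughout I work interchangeably with $\vds$ and $\vdss$ via Theorem~\ref{compl-sound}, and I use repeatedly that all three box operators are normal (A2 together with $\mathrm{RN}$), hence monotone: $\vds\psi\to\chi$ yields $\vds\star_{c}\psi\to\star_{c}\chi$ for $\star\in\{I,K,B\}$. Items (1) and (2) are short consequences of Proposition~\ref{prop:propertiessab} plus normality; the genuinely new content is item (3), which is where I expect the only real work.

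\emph{Item (1).} Proposition~\ref{prop:propertiessab}(5) gives $\vds\sab\varphi\to\sab\sab\varphi$. Unfolding the definition of $\sab$, the formula $\sab\sab\varphi$ has $I_{a}(\sab\varphi\land\neg K_{b}\sab\varphi)$ among its conjuncts, and by monotonicity of $I_{a}$ this implies $I_{a}\sab\varphi$. Chaining the two implications yields $\vds\sab\varphi\to I_{a}\sab\varphi$.

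\emph{Item (2).} All three conjuncts follow from one uniform observation. For $\star_{b}\in\{K_{b},I_{b},B_{b}\}$, the pertinent instance of A4 (for $K_{b}$), A9 (for $I_{b}$), or A7 (for $B_{b}$) gives $\vds\star_{b}\varphi\to K_{b}\star_{b}\varphi$, hence $\vds(\star_{b}\varphi\land\neg K_{b}\star_{b}\varphi)\leftrightarrow\bot$. By closure under provable equivalence (derivable from $\mathrm{RN}_{I}$ and A2) this gives $\vds I_{a}(\star_{b}\varphi\land\neg K_{b}\star_{b}\varphi)\leftrightarrow I_{a}\bot$, and $\vds\neg I_{a}\bot$ exactly as in the proof of Proposition~\ref{prop:propertiessab}(2) (from $\vds I_{a}\top$ via A8, since $\neg\top\leftrightarrow\bot$). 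As $I_{a}(\star_{b}\varphi\land\neg K_{b}\star_{b}\varphi)$ is a conjunct of $\sab\star_{b}\varphi$, we obtain $\vds\neg\sab\star_{b}\varphi$ for each of the three choices, and the desired conjunction follows.

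\emph{Item (3).} Here I would argue semantically. Fix a model $\mathcal{M}=(W,\dots,v)$ and $i\in W$ and suppose, towards a contradiction, $\mathcal{M},i\models I_{a}K_{b}\sab\varphi$. By seriality of $R^{I}_{a}$ choose $j$ with $R^{I}_{a}(i,j)$; then $\mathcal{M},j\models K_{b}\sab\varphi$, and by reflexivity of $R^{K}_{b}$ also $\mathcal{M},j\models\sab\varphi$, whence, by the definition of $\sab$ and monotonicity of $I_{a}$, $\mathcal{M},j\models I_{a}\neg K_{b}\varphi$. Using seriality of $R^{I}_{a}$ again, choose $l$ with $R^{I}_{a}(j,l)$, so that $\mathcal{M},l\models\neg K_{b}\varphi$. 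On the other hand, transitivity of $R^{I}_{a}$ gives $R^{I}_{a}(i,l)$, hence $\mathcal{M},l\models K_{b}\sab\varphi$; since $\vds\sab\varphi\to\varphi$ by Proposition~\ref{prop:propertiessab}(3), monotonicity of $K_{b}$ yields $\mathcal{M},l\models K_{b}\varphi$, contradicting $\mathcal{M},l\models\neg K_{b}\varphi$. Hence $\mathcal{M},i\models\neg I_{a}K_{b}\sab\varphi$; as $\mathcal{M},i$ were arbitrary, $\vdss\neg I_{a}K_{b}\sab\varphi$, and Theorem~\ref{compl-sound} gives $\vds\neg I_{a}K_{b}\sab\varphi$.

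The main obstacle is exactly item (3): one must notice that $K_{b}\sab\varphi$ simultaneously forces $\varphi$ to be $K_{b}$-known (via the factive $K_{a}$ inside $\sab$) and, being entailed under $I_{a}$ by $\sab\varphi$, forces $a$ to intend $\neg K_{b}\varphi$, and that reconciling these requires stepping \emph{twice} along $R^{I}_{a}$ and collapsing the detour via transitivity — this is why seriality alone is not enough and A11 (resp. transitivity of $R^{I}_{a}$) is essential. If one prefers to remain proof-theoretic, the same contradiction can be reached syntactically: from $\vds K_{b}\sab\varphi\to I_{a}\neg K_{b}\varphi$ and $\vds K_{b}\sab\varphi\to K_{b}\varphi$, using A8, A11, and monotonicity of $I_{a}$, one derives $\vds I_{a}K_{b}\sab\varphi\to(I_{a}I_{a}K_{b}\varphi\land\neg I_{a}I_{a}K_{b}\varphi)$; the semantic route above is shorter.
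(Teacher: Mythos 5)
Your items (2) and (3) are correct. For (2) you take a genuinely different and more uniform route than the paper: the paper derives the contradiction from the $K_{a}$/$B_{a}$ conjuncts of $\sab\star_{b}\varphi$ (obtaining $B_{a}K_{b}K_{b}\varphi$ against $B_{a}\neg K_{b}K_{b}\varphi$ via A4/A9/A7, A6 and the consistency axiom A5), whereas you observe once and for all that $\star_{b}\varphi\land\neg K_{b}\star_{b}\varphi$ is inconsistent and refute the $I_{a}$ conjunct via $\vds\neg I_{a}\bot$; both work, and yours handles the three cases in a single stroke using A8 instead of A5. For (3) your semantic argument is essentially the paper's proof read through the frame correspondences: the two hops along $R^{I}_{a}$ collapsed by transitivity are exactly the paper's derivation of $I_{a}I_{a}(K_{b}\varphi\land\neg K_{b}\varphi)$ from $I_{a}K_{b}\varphi$ via A11 together with $I_{a}I_{a}\neg K_{b}\varphi$, so the content is the same.

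Item (1), however, is circular as written. You derive $\sab\varphi\to I_{a}\sab\varphi$ from Proposition~\ref{prop:propertiessab}(5), but the forward direction of that proposition, $\sab\varphi\to\sab\sab\varphi$, has $I_{a}(\sab\varphi\land\neg K_{b}\sab\varphi)$ --- hence $I_{a}\sab\varphi$ --- as a conjunct of its conclusion, so it is strictly stronger than the statement you are proving; and indeed the paper's proof of Proposition~\ref{prop:propertiessab}(5) explicitly invokes the present item to supply the conjunct $I_{a}\sab\varphi$ (the reference ``by item (5)'' there is a mis-citation of this very item). Your citation therefore pushes all of the work back onto a result whose proof depends on item (1). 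The intended direct argument is short: from the conjunct $I_{a}(\varphi\land\neg K_{b}\varphi)$ of $\sab\varphi$ extract $I_{a}\varphi$ and $I_{a}\neg K_{b}\varphi$ by normality; A10 gives $I_{a}K_{a}\varphi$; A10 followed by A6 and monotonicity of $I_{a}$ gives $I_{a}B_{a}\neg K_{b}\varphi$; A11 gives $I_{a}I_{a}(\varphi\land\neg K_{b}\varphi)$; and recombining these under $I_{a}$ by distributivity over $\land$ yields $I_{a}\sab\varphi$.
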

\begin{proof}
Throughout the proof, $\mathcal{M}=(W,\{R^{I}_{a}\}_{a\in Ag},\{R^{K}_{a}\}_{a\in Ag},\{R^{B}_{a}\}_{a\in Ag},v)$ is an  arbitrary $\mathcal{S}$-model, and $i\in W$.
\begin{enumerate}

\item Suppose that $\mathcal{M},i\models\sab\varphi$. Then $\mathcal{M},i\models I_{a}(\neg K_{b}\varphi\land \varphi)$ implies that $\mathcal{M},i\models I_{a}\varphi$ by the distributivity of $I_{a}$ over $\land$. By (A10), one has that $\mathcal{M},i\models I_{a}K_{a}\varphi$. Also, since $\mathcal{M},i\models I_{a}\neg K_{b}\varphi$, one has $\mathcal{M},i\models I_{a}K_{a}\neg K_{b}\varphi$ and also $\mathcal{M},i\models I_{a}B_{a}\neg K_{b}\varphi$ by (A2) w.r.t. $I_{a}$, (A6), and (RMP). Finally, $\mathcal{M},i\models I_{a}I_{a}T_{b}\varphi$ follows from (A11).

\item Upon noticing that, if $\mathcal{M},i\models\sab K_{b}\varphi$, then $\mathcal{M},i\models K_{a}K_{b}\varphi\land B_{a}\neg K_{b}K_{b}\varphi$. By applying (A4) and a  customary reasoning, one has $\mathcal{M},i\models  K_{a} K_{b}K_{b}\varphi$ and so $\mathcal{M},i\models B_{a} K_{b}K_{b}\varphi$ by (A6). However, this is impossible by (A5). Similarly, if $\mathcal{M},i\models\sab I_{b}\varphi$, then $\mathcal{M},i\models B_{a}\neg K_{b}I_{b}\varphi$, and so $\mathcal{M},i\models B_{a}\neg I_{b}\varphi$, by (A9), $\vds (I_{b}\varphi\to K_{b}I_{b}\varphi)\to(\neg K_{b}I_{b}\varphi \to\neg I_{b}\varphi)$, various applications (A2) w.r.t. $B_{a}$ and (RMP). However, by $\mathcal{M},i\models K_{a}I_{b}\varphi$, one has also $\mathcal{M},i\models B_{a}I_{b}\varphi$, by (A6). But this is impossible by (A5). The conclusion $\mathcal{M},i\not\models \sab B_{b}\varphi$ can be reached similarly.

\item Upon noticing that $\mathcal{M},i\models I_{a}K_{b}\sab\varphi$ implies $\mathcal{M},i\models I_{a}K_{b}\varphi$ as well as $\mathcal{M},i\models I_{a}I_{a}\neg K_{b}\varphi$. However, the former condition entails that $\mathcal{M},i\models I_{a}I_{a}K_{b}\varphi$ and so $\mathcal{M},i\models I_{a}I_{a}(K_{b}\varphi\land\neg K_{b}\varphi)$, which is impossible.
\qed
\end{enumerate}
\end{proof}

Interestingly, by item (1), agent $a$ intends to preserve the secret. This is coherent with our modeling framework, which is not dynamic and cannot represent situations in which evolving dynamics make $\varphi$ not worth being protected anymore.
Item (2) states that an agent $a$ cannot keep $b$'s intentions, beliefs and knowledge secret from $b$ herself (cf. item 3 from Table \ref{tab:agot}).
Item (3) states a coherency criterion for agents' intentions. Indeed, no agent $a$ can intend to let another agent $b$ know that she is keeping a proposition $\varphi$ secret from $b$.

We observe that item (1) of Proposition \ref{prop:propertiessab2} can be generalized as follows.
Let $a\in Ag$, $\varphi\in\mathrm{Fm}_{\mathsf{S}}$ and $\Box\in\{K_{a},B_{a},I_{a}\}$. We define, for any $n\geq 0$, $\Box^{n}\varphi$ recursively as follows. We set $\Box^{0}\varphi:=\varphi$ and $\Box^{n+1}\varphi:=\Box\Box^{n}\varphi$.
\begin{proposition}\label{prop:A}Let $\varphi\in\mathrm{Fm}_{\mathsf{S}}$ and $a,b\in Ag$. Then, for any $n\geq 1$: $$\vds\sab\varphi\to I^{n}_{a}\sab\varphi.$$
\end{proposition}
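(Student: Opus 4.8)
The statement generalizes Proposition~\ref{prop:propertiessab2}(1), which is exactly the case $n=1$. The natural approach is induction on $n$, using as the base case the already-established fact $\vds\sab\varphi\to I_{a}\sab\varphi$. For the inductive step, I would assume $\vds\sab\varphi\to I^{n}_{a}\sab\varphi$ and aim to derive $\vds\sab\varphi\to I^{n+1}_{a}\sab\varphi$.

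\medskip

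The cleanest way to run the inductive step is to first prove the auxiliary claim that $I_{a}$ is \emph{monotone}: if $\vds\alpha\to\beta$, then $\vds I_{a}\alpha\to I_{a}\beta$. This follows immediately from $\mathrm{RN}_{I_a}$ applied to $\alpha\to\beta$ together with axiom (A2) for $I_a$ and (RMP). Given monotonicity, apply it to the case-$n=1$ result $\vds\sab\varphi\to I_{a}\sab\varphi$ to obtain $\vds I^{n}_{a}\sab\varphi\to I^{n}_{a}(I_{a}\sab\varphi)=I^{n+1}_{a}\sab\varphi$ (peeling off one outer $I_a$ at a time, $n$ times, or more directly by an inner induction establishing $\vds I^{n}_{a}\alpha\to I^{n}_{a}\beta$ from $\vds\alpha\to\beta$). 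Chaining this with the inductive hypothesis $\vds\sab\varphi\to I^{n}_{a}\sab\varphi$ via transitivity of $\to$ (i.e.\ propositional logic, A1, and RMP) yields $\vds\sab\varphi\to I^{n+1}_{a}\sab\varphi$, completing the induction.

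\medskip

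An alternative, perhaps even shorter, route avoids iterated monotonicity: one can instead induct using the transitivity-style axiom (A11), $I_{a}\varphi\to I_{a}I_{a}\varphi$, which (again by monotonicity of $I_a^{k}$ for suitable $k$) gives $\vds I^{n}_{a}\psi\to I^{n+1}_{a}\psi$ for any $\psi$; taking $\psi=\sab\varphi$ and composing with the inductive hypothesis does the job. Either way the structural ingredients are the same: the base case from Proposition~\ref{prop:propertiessab2}(1), monotonicity of $I_a$ (hence of $I_a^{n}$), and propositional chaining.

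\medskip

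I do not expect any genuine obstacle here; the only mild care needed is to state and prove the monotonicity/distribution lemma for iterated boxes $I^{n}_{a}$ cleanly, so that "applying a derivable implication under $n$ nested $I_a$'s" is rigorously justified rather than hand-waved. Once that is in place, the argument is a routine induction.
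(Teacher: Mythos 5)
Your proof takes essentially the same route as the paper: induction on $n$ with Proposition~\ref{prop:propertiessab2}(1) as the base case, the inductive step being justified by monotonicity of $I_a$ (which the paper leaves implicit in the phrase ``by induction applying Proposition~\ref{prop:propertiessab2}(1)''). The one point you miss is that the statement is asserted for \emph{all} $a,b\in Ag$, whereas Proposition~\ref{prop:propertiessab2}(1) — your base case — is only stated for $a\ne b$. The paper dispatches the case $a=b$ separately: by Proposition~\ref{prop:propertiessab}(1) one has $\vds\neg S_{a,a}\varphi$, so the implication holds vacuously. Adding that one line makes your argument complete; everything else (the monotonicity lemma for $I_a$ via $\mathrm{RN}_{I_a}$, (A2) and (RMP), its iteration to $I_a^{n}$, and the propositional chaining) is correct and is exactly the machinery the paper's terse proof relies on.
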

    \begin{proof}
    If $a=b$, then the statement is vacuously true by Proposition \ref{prop:propertiessab}(1). Otherwise, our conclusion follows by induction applying Proposition \ref{prop:propertiessab2}(1). 
    \qed
    \end{proof}
    
It is worth observing that, while Xiong-{\AA}gotnes' notion of ``having a secret'' satisfies (at least whenever S5 is considered) what they call Secret negation completeness (see item 5 in Table \ref{tab:agot}), our notion of ``intending to keep a true secret'' does not even if negation completeness is assumed in a weaker form. Moreover, we observe that the result below shows the non-validity of item 6 from Table \ref{tab:agot} (in  \cite{Xiong2023} ensured once S5 is taken into account) in our framework.
\begin{proposition}\label{prop:notnegcompl} The following hold:
\begin{enumerate}
    \item $\not\vds K_{a}\varphi\to (\neg\sab\varphi\to K_{a}\neg\sab\varphi)$;
    \item $\not\vds K_{a}\varphi\to K_{a}\sab\varphi\lor K_{a}\neg\sab\varphi$.
\end{enumerate}
\end{proposition}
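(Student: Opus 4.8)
The plan is to use soundness (the easy direction of Theorem~\ref{compl-sound}) to reduce both non-derivability claims to the construction of a single $\mathcal{S}$-model $\mathcal{M}$ with a world $i$ at which both implications fail. Note first that the intended reading is $a\neq b$: if $a=b$ then $\vds\neg S_{a,a}\varphi$ by Proposition~\ref{prop:propertiessab}(1), whence $\vds K_{a}\neg S_{a,a}\varphi$, and both (1) and (2) become derivable. So fix $a\neq b$. To refute (1) I want, at $i$: $K_{a}\varphi$ true, $\sab\varphi$ false, yet some $R^{K}_{a}$-successor $j$ of $i$ with $\sab\varphi$ true (so $K_{a}\neg\sab\varphi$ fails at $i$). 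The very same $i$ and $j$ refute (2), since then $K_{a}\sab\varphi$ fails at $i$ (because $\sab\varphi$ is false at the successor $i$ itself) and $K_{a}\neg\sab\varphi$ fails at $i$ (because $\sab\varphi$ is true at $j$), while $K_{a}\varphi$ holds. The guiding observation is that factivity and transitivity of $K_{a}$ force $\varphi$ to hold at all $R^{K}_{a}$-successors of $i$ (in particular at $j$), so the discrepancy in the truth value of $\sab\varphi$ between $i$ and $j$ must be produced through the conjunct $B_{a}\neg K_{b}\varphi$: at $j$ agent $a$ should believe $b$ ignorant of $\varphi$, while at $i$ agent $a$ should consider possible a state where $b$ knows $\varphi$.

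Concretely, I would take $W=\{i,j,k\}$, choose a variable $p$ with $v(p)=\{i,j\}$ and put $\varphi:=p$. For agent $b$ set $R^{K}_{b}=R^{B}_{b}=R^{I}_{b}=\{(i,i),(j,j),(k,k),(j,k)\}$, so that $K_{b}p$ is true at $i$ but false at $j$ and at $k$. For agent $a$ set $R^{K}_{a}=R^{B}_{a}=\{(i,i),(j,j),(k,k),(i,j)\}$ and $R^{I}_{a}=\{(i,j),(j,j),(k,k)\}$; every further agent of $Ag$ gets all three relations equal to $\Delta_{W}$. A routine check of the truth definition then yields: $K_{a}p$ holds at $i$ and at $j$; at $j$ all of $K_{a}p$, $B_{a}\neg K_{b}p$ and $I_{a}(p\wedge\neg K_{b}p)$ hold, hence $\sab p$ holds at $j$; but $B_{a}\neg K_{b}p$ fails at $i$, since $i$ is its own $R^{B}_{a}$-successor and $K_{b}p$ holds there, hence $\sab p$ fails at $i$. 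Therefore at $i$ we have $K_{a}p$ true, $\neg\sab p$ true, $K_{a}\neg\sab p$ false and $K_{a}\sab p$ false, which falsifies both $K_{a}\varphi\to(\neg\sab\varphi\to K_{a}\neg\sab\varphi)$ and $K_{a}\varphi\to(K_{a}\sab\varphi\lor K_{a}\neg\sab\varphi)$ at $i$.

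The step I expect to be the main obstacle is verifying that $\mathcal{M}$ is genuinely an $\mathcal{S}$-frame, specifically the three mixed interaction conditions of Definition~\ref{def:s-mod}: the $K$-then-$I$ condition~(5), the combination of $R^{B}_{a}\subseteq R^{K}_{a}$ with the $K$-then-$B$ condition~(7), and the $I$-then-$K$ condition~(8). These force, e.g., $(i,j)\in R^{B}_{a}$ once $(i,j)\in R^{K}_{a}$, and $(i,j)\in R^{I}_{a}$ once $(i,j)\in R^{K}_{a}$ together with $(j,j)\in R^{I}_{a}$; the relations above were picked minimally so that every such closure demand is already satisfied and no new pair is forced in. Reflexivity and transitivity of the $R^{K}$'s, seriality of the $R^{B}$'s and $R^{I}$'s, transitivity of the $R^{I}$'s, and the per-agent character of all the conditions (so that the trivial $\Delta_{W}$ relations assigned to the remaining agents cause no interference) are all immediate. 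Once the frame is validated, the local computations at $i$, $j$, $k$ are straightforward bookkeeping.
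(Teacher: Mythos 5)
Your countermodel is correct: the frame conditions of Definition~\ref{def:s-mod} all check out for the relations you give (the only nontrivial closures are $(i,j)\in R^{K}_{a}$ forcing $(i,j)\in R^{I}_{a}$ via conditions (5)/(8) and $(i,j)\in R^{B}_{a}$ via (6)/(7), and you have built these in), and the local computations at $i$ and $j$ do falsify both implications at $i$. Your overall strategy — an explicit finite countermodel plus soundness — is the same as the paper's, but the details differ in two respects. First, the paper handles item (2) by observing that factivity of $K_{a}$ reduces it to item (1) and then gives a four-world countermodel for (1) only; you refute both items simultaneously at the same world, which is an equivalent but more self-contained packaging. Second, and more interestingly, the mechanism of failure is different: in the paper's model $\sab p$ fails at the actual world because the \emph{intention} conjunct $I_{a}\neg K_{b}p$ fails there, which is what licenses the paper's ensuing diagnosis that Secret Negation Completeness breaks down for lack of negative introspection w.r.t.\ intentions; in your model $I_{a}\neg K_{b}p$ is in fact \emph{true} at $i$ (its unique $R^{I}_{a}$-successor is $j$, where $\neg K_{b}p$ holds) and the failure is carried entirely by the belief conjunct $B_{a}\neg K_{b}p$. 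So your heuristic remark that the discrepancy ``must'' come through $B_{a}\neg K_{b}\varphi$ is an overstatement — the paper's own countermodel produces it through the intention conjunct — but this does not affect the validity of your construction; if anything it shows the principle already fails for purely doxastic reasons, independently of the intention operator.
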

\begin{proof}Since it can be proven that (2) implies (1), we confine ourselves to provide a counterexample for (1). Let us consider the structure $$\mathcal{M}=(\{i,j,u,w\},,\{R^{I}_{a}\}_{a\in Ag},\{R^{K}_{a}\}_{a\in Ag},\{R^{B}_{a}\}_{a\in Ag},v))$$ such that, for a fixed $a\in Ag$, one has 
\begin{itemize}
    \item $R^{K}_{a}=R^{B}_{a}:=\Delta_{W}\cup\{(i,u)\}$;
    \item $R^{I}_{a}:=(\Delta_{W}\smallsetminus\{(i,i)\})\cup\{(i,j)\}$,
\end{itemize}
 and, for any other $b\in Ag$ such that $a\ne b$
 \begin{itemize}
    \item $R^{K}_{b}=R^{B}_{b}:=\Delta_{W}\cup\{(u,w)\}$;
    \item $R^{I}_{b}:=\Delta_{W}$.
\end{itemize}
Moreover, let $v$ be such that, for a fixed $p\in Var$, $v(p)=\{i,j,u\}$. It can be verified that $\mathcal{M}$ is an $\mathcal{S}$-model. Upon extending $v$ to an evaluation over $\mathrm{Fm}_{\mathsf{S}}$, one has $\mathcal{M},i\models K_{a}\varphi$, $\mathcal{M},i\models \neg I_{a}\neg K_{b}\varphi$, and so $\mathcal{M},i\models \neg\sab\varphi$. However, one has also $\mathcal{M},i\not\models K_{a}\neg\sab\varphi$.
\qed
\end{proof}

As the proof of Proposition \ref{prop:notnegcompl} shows, the Secret Negation Completeness principle fails for our operator since we do not assume any form of negative introspection w.r.t.  intentions, namely 
 \[\neg I_{a}\varphi\to K_{a}\neg I_{a}\varphi\label{naivenegintro}\tag{NI}\] need not hold. The reason why we have not assumed such a condition among our assumptions is that negative introspection w.r.t. intentions seems a rather strong requirement as it presupposes that any agent has always a clear position concerning whether she intends to bring about states of affairs validating a proposition or not, for \emph{any} proposition. Indeed, while, according to  \cite{Slepian2017}, ``an
individual has a secret the moment he or she decides to withhold information about an episode or act from another person'', so intentions of secrecy are always explicit and known, agents may not be aware that they do not intend to keep a secret, think e.g. to cases of self-deception, or the more common cases in which agents do not even take into account a given proposition.

Beside negative results from Proposition \ref{prop:notnegcompl}, the next proposition highlights further ``non-properties'' of our operator due to the lack of negative introspection, and our definition of ``intending to keep a true secret''.

\begin{proposition}\label{proposition:nonproperties}
    Let $\varphi\in\mathrm{Fm}_{\mathsf{S}}$ and $a,b\in Ag$ with $a\ne b$. Then:
\begin{enumerate}
\item $\not\vds\neg\sab\varphi\to \sab\neg \sab\varphi$; 
\item
$\not\vds\neg \sab\varphi\to \neg \sab\neg \sab\varphi$;
\item $\not\vds\neg \sab(\varphi\to\psi)\to(\neg \sab\varphi\to\neg \sab\psi)$;
\item $\not\vds\neg \sab\neg K_{b}\varphi$;
\item $\not\vds\neg\sab\neg S_{b,a}\varphi$
\end{enumerate}
\end{proposition}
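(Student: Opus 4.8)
The plan is to appeal throughout to soundness and completeness (Theorem~\ref{compl-sound}), so that each item reduces to producing an $\mathcal{S}$-model (and a world) refuting the displayed implication --- equivalently, for (4) and (5), an $\mathcal{S}$-model with a world satisfying the formula $\sab(\cdots)$ whose negation is claimed non-valid. Items (1)--(3) will be cheap, following from results already established together with the single observation that $\sab p$ is satisfiable; items (4) and (5) are the substantial ones and will require fresh countermodels that exploit the absence of negative introspection for agent $b$ (a genuine feature of our $S4/KD4$ setting rather than an $S5/KD45$ one; in the framework of \cite{Xiong2023} these items would be valid, cf.\ Table~\ref{tab:agot}).

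For (1) I would not build anything new: in the model $\mathcal{M}$ from the proof of Proposition~\ref{prop:notnegcompl} one has $\mathcal{M},i\models\neg\sab\varphi$ and $\mathcal{M},i\not\models K_{a}\neg\sab\varphi$; since $\vds\sab\chi\to K_{a}\chi$ (the first conjunct of $\sab\chi$), this yields $\mathcal{M},i\not\models\sab\neg\sab\varphi$, so the implication fails at $i$. For (3) I would instantiate $\varphi:=\bot$ and $\psi:=p$: then $\vds\neg\sab\bot$ by Proposition~\ref{prop:propertiessab}(2), and, using $\vds(\bot\to p)\leftrightarrow\top$, closure under equivalence and Proposition~\ref{prop:propertiessab}(2) again, also $\vds\neg\sab(\bot\to p)$; hence the implication in (3) holds in $\mathsf{S}$ iff $\vds\neg\sab p$, which fails as soon as $\sab p$ is satisfiable. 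A concrete witness is the two-world model on $W=\{i,t\}$ with $v(p)=\{i\}$, agent $a$'s three relations all equal to $\Delta_{W}$, agent $b$'s three relations all equal to $\Delta_{W}\cup\{(i,t)\}$, and $\Delta_{W}$ for every other agent: at $i$ one reads off $K_{a}p$, $B_{a}\neg K_{b}p$ and $I_{a}(p\wedge\neg K_{b}p)$, i.e.\ $\mathcal{M},i\models\sab p$. For (2) I would show that $\sab\neg\sab p$ is satisfiable at some $i_{0}$; then $\mathcal{M},i_{0}\models\neg\sab p$ too (Proposition~\ref{prop:propertiessab}(3)), so $\neg\sab p\to\neg\sab\neg\sab p$ fails at $i_{0}$. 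This can be arranged on $W=\{i_{0},u\}$ with $v(p)=\{u\}$, agent $a$'s relations all $\Delta_{W}$ and agent $b$'s relations all $W\times W$: then $\sab p$ holds at $u$, while at $i_{0}$ agent $a$ knows, believes and intends $\neg\sab p$ and $b$ fails to know it, because $b$ considers $u$ possible from $i_{0}$.

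Items (4) and (5) are the heart of the argument. For (4) I would take $W=\{i_{0},u\}$, $v(p)=\{u\}$, agent $a$'s relations all $\Delta_{W}$, and agent $b$'s relations all equal to the preorder $\{(i_{0},i_{0}),(u,u),(i_{0},u)\}$ (and $\Delta_{W}$ for the rest): then $\neg K_{b}p$ holds at $i_{0}$ (as $p$ is false there) while $K_{b}p$ holds at $u$, so $\neg K_{b}\neg K_{b}p$ holds at $i_{0}$; since $a$'s relations fix $i_{0}$, all three conjuncts of $\sab\neg K_{b}p$ are satisfied at $i_{0}$, giving $\not\vds\neg\sab\neg K_{b}p$. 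For (5) the idea is the mirror image: on $W=\{i_{0},u\}$, $v(p)=\{u\}$, let agent $b$'s relations all be the preorder $\{(i_{0},i_{0}),(u,u),(i_{0},u)\}$ and agent $a$'s relations all be the preorder $\{(i_{0},i_{0}),(u,u),(u,i_{0})\}$. Then $S_{b,a}p$ holds at $u$ --- there $b$ knows $p$ while, because $a$ considers $i_{0}$ possible from $u$, $a$ does not --- whereas at $i_{0}$ all of $a$'s relations stay at $i_{0}$, where $S_{b,a}p$ fails, and $b$ still considers $u$ possible from $i_{0}$; hence the three conjuncts of $\sab\neg S_{b,a}p$ hold at $i_{0}$, giving $\not\vds\neg\sab\neg S_{b,a}p$.

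The main obstacle is twofold. First, one has to recognise that the countermodels for (4)--(5) must make agent $b$ violate negative introspection, which dictates the shape of the frames. Second, one must check that the structures used for (2), (4) and (5) really are $\mathcal{S}$-frames in the sense of Definition~\ref{def:s-mod}; the delicate clauses are (5), (7) and (8), which are closure conditions intertwining $R^{I}_{c}$, $R^{B}_{c}$ and $R^{K}_{c}$ for each agent $c$, so that enlarging $R^{K}_{b}$ risks propagating through them. The device that keeps this routine is to set $R^{B}_{c}=R^{I}_{c}=R^{K}_{c}$ for every agent $c$ whose $R^{K}_{c}$ is not the identity: since each such relation is a preorder (reflexive and transitive --- and $W\times W$ qualifies), clauses (2)--(8) all collapse to reflexivity/transitivity of the relations, which holds by construction, and what remains is only the mechanical pointwise truth computation at the distinguished world, which I would leave to the reader.
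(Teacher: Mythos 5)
Your proposal is correct. The paper in fact states Proposition~\ref{proposition:nonproperties} without any proof (only the comparison with items 17, 23, 25 of Table~\ref{tab:agot} follows it), so there is no ``official'' argument to measure yours against; what you supply is exactly the kind of countermodel construction the paper leaves to the reader, and it is consistent in style with the explicit countermodels given for Propositions~\ref{prop:notnegcompl}, \ref{prop:nonmonotonic} and \ref{prop:notperfsecret}. I checked your models: the two-world frames with all of one agent's relations equal to a single preorder satisfy clauses (2)--(8) of Definition~\ref{def:s-mod} for the reason you give, and the pointwise evaluations for (2), (4) and (5) go through (in particular, $\sab \neg\sab p$ at $i_{0}$ in (2), $\sab\neg K_{b}p$ at $i_{0}$ in (4), and $\sab\neg S_{b,a}p$ at $i_{0}$ in (5) all hold as claimed, each exploiting a failure of negative introspection for $b$). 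The reuse of the model of Proposition~\ref{prop:notnegcompl} for item (1) via $\vds\sab\chi\to K_{a}\chi$ is sound, and the instantiation $\varphi:=\bot$, $\psi:=p$ for item (3) correctly reduces that item to the satisfiability of $\sab p$, under the (correct, and forced by item (4)) reading of these claims as asserting that the schemes are not valid for all instances. Two cosmetic points only: in the models for items (2) and (5) you should state explicitly that every agent other than $a$ and $b$ gets the identity relation throughout, as you did for (4); and in item (2) it is worth recording that $\sab p$ holds at $u$ before concluding $\neg K_{b}\neg\sab p$ at $i_{0}$, since that is the step where $b$'s total accessibility relation is actually used.
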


Note that (1)-(3) from  Proposition \ref{proposition:nonproperties} fail in \cite{Xiong2023} even whenever S5 is considered (item 17, 23, 25 from Table \ref{tab:agot}). In turn, this entails that our system witnesses that items 26 and 27 from Table \ref{tab:agot} hold in our framework as well.  However, due to the definition they give, $S_{a}$ satisfies (4) and (5) (see item 4 from Table \ref{tab:agot}).

\section{Relation between true secrets and information flows}\label{sec:relation between}

Perhaps the main advantage of representing secrecy by combining concepts such as knowledge, belief, and intentions is that it allows us to formally reason about issues concerning, on the one hand, the ability of the keeper of the secret to keep it a secret and, on the other hand, the ability of the nescient to learn anything about it. These issues are studied extensively by theories devoted to analyzing information flows within complex systems (e.g., networks of cyber-physical systems and groups of human agents following the steps of an interactive communication protocol). Two remarkable examples are given by the principle of Zero-Knowledge (ZK)~\cite{10.1137/0218012} and by the noninterference approach to confidentiality analysis~\cite{GM82}. The former is concerned with the ability of the keeper to prove to be the secret keeper without revealing any further information about the secretum other than the keeper's knowledge of it (e.g., in specific security protocols, the keeper is interested in showing to possess a given key without revealing it). The latter defines a security model employed to restrict the information flow through systems and is based on verifying whether (the use of) private information has a side effect on (the observations of) public information.

In this section, we present properties of the secrecy operator that help formalize some of the issues about information flow analysis in a logical setting by proposing examples related to ZK and noninterference. In particular, we will discuss the relationship among secrets, the intentions of the secret keeper, and the consequences for the nescient.

We start with the remark that, in the present framework, the Interpolation Rule still holds, cf. p. \pageref{eq:intprule}.

\begin{proposition} 
\label{prop:implications}
Let $\varphi,\psi \in\mathrm{Fm}_{\mathsf{S}}$ and
$a,b\in Ag$ with $a\ne b$. The following hold:
\begin{enumerate}
    \item $\vds \varphi\to\psi$ implies $\vds B_{a}\neg K_{b}\psi\to B_{a}\neg K_{b}\varphi$;
    \item $\vds \varphi\to\psi$ implies $\vds I_{a}\neg K_{b}\psi\to I_{a}\neg K_{b}\varphi$;
    \item $\vds\varphi\to\psi$ and $\vds\psi\to\chi$ imply $\vds(\sab\varphi\land\sab\chi)\to\sab\psi$.
\end{enumerate}
\end{proposition}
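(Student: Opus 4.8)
The plan is to handle the three items in order, the first two being lemmas feeding into the third. For item (1), I would argue contrapositively. Suppose $\vds\varphi\to\psi$; by $\mathrm{RN}_{K_b}$ and (A2) this gives $\vds K_b\varphi\to K_b\psi$, hence $\vds\neg K_b\psi\to\neg K_b\varphi$. Applying $\mathrm{RN}_{B_a}$ and (A2) again yields $\vds B_a\neg K_b\psi\to B_a\neg K_b\varphi$, which is exactly the claim. Item (2) is identical in structure: from $\vds\neg K_b\psi\to\neg K_b\varphi$, apply $\mathrm{RN}_{I_a}$ and (A2) with $\star=I_a$ to obtain $\vds I_a\neg K_b\psi\to I_a\neg K_b\varphi$. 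Both of these are routine monotonicity arguments for normal modalities; the only thing to be careful about is that the antecedent of $\neg K_b$ flips the direction of the implication, which the contrapositive handles cleanly.

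For item (3), assume $\vds\varphi\to\psi$ and $\vds\psi\to\chi$. I would work semantically (using Theorem~\ref{compl-sound} to pass freely between $\vds$ and $\models_{\mathcal S}$): fix an arbitrary $\mathcal S$-model $\mathcal M$ and world $i$ with $\mathcal M,i\models\sab\varphi\land\sab\chi$, and show $\mathcal M,i\models\sab\psi$, i.e. $\mathcal M,i\models K_a\psi\land B_a\neg K_b\psi\land I_a(\psi\land\neg K_b\psi)$. The first conjunct $K_a\psi$ follows from $\mathcal M,i\models K_a\varphi$ (a conjunct of $\sab\varphi$) together with $\vds\varphi\to\psi$ and normality of $K_a$. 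The second conjunct $B_a\neg K_b\psi$ follows from $\mathcal M,i\models B_a\neg K_b\chi$ (a conjunct of $\sab\chi$) together with $\vds\psi\to\chi$ and item (1) of this proposition. For the third conjunct, note $\mathcal M,i\models I_a(\varphi\land\neg K_b\varphi)$, so by distributivity of $I_a$ over $\land$ we get $\mathcal M,i\models I_a\varphi$; combined with $\vds\varphi\to\psi$ and normality of $I_a$, this gives $\mathcal M,i\models I_a\psi$. Likewise $\mathcal M,i\models I_a(\chi\land\neg K_b\chi)$ from $\sab\chi$ yields $\mathcal M,i\models I_a\neg K_b\chi$, and by item (2) applied to $\vds\psi\to\chi$ we obtain $\mathcal M,i\models I_a\neg K_b\psi$. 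Finally, re-combining via distributivity of $I_a$ over $\land$ gives $\mathcal M,i\models I_a(\psi\land\neg K_b\psi)$, completing the argument.

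The main thing to get right — the closest this comes to a genuine obstacle — is the bookkeeping of \emph{which} of the two hypotheses $\sab\varphi$ and $\sab\chi$ supplies each conjunct of $\sab\psi$: the $K_a$-conjunct and the $I_a\varphi$-part of the intention come from the \emph{bottom} formula $\varphi$ (using $\varphi\to\psi$ and upward monotonicity of $K_a$, $I_a$), whereas the $B_a\neg K_b$-conjunct and the $I_a\neg K_b$-part of the intention come from the \emph{top} formula $\chi$ (using $\psi\to\chi$ and the \emph{order-reversing} monotonicity of $B_a\neg K_b$ and $I_a\neg K_b$ established in items (1)--(2)). This mirrors exactly the shape of the Xiong--{\AA}gotnes interpolation rule recalled on p.~\pageref{eq:intprule}, and no property of the modalities beyond normality, A2, A10 (not even needed here), and items (1)--(2) is required; in particular factivity and seriality play no role.
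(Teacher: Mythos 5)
Your proof is correct and follows exactly the argument the paper intends (the paper leaves this proposition unproved, as announced at the start of Section~\ref{sec some properties of true}): items (1)--(2) are the standard monotonicity-plus-contraposition derivations for normal modalities, and item (3) assembles the three conjuncts of $\sab\psi$ from the right sources — $K_a$ and $I_a\psi$ upward from $\sab\varphi$, the $\neg K_b$-parts downward from $\sab\chi$ via (1)--(2) — precisely mirroring the Xiong--{\AA}gotnes interpolation rule. Your closing observation that only normality is used (no factivity, seriality, or A10) is also accurate.
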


We believe that a further deepening of the meaning of the Xiong-{\AA}gotnes Interpolation principle is in order. For example, suppose that ``uploading a top-secret document to a private repository'' ($\varphi$) implies that ``the repository turns on read-only access'' ($\psi$), and that $\psi$ implies that ``an alert is sent to the repository admin'' ($\chi$). Then, suppose both $\varphi$ and $\chi$ are kept secret from an agent $b$ (even if $b$ is aware of the kind of connection from $\varphi$ to $\chi$ through $\psi$). In that case, it is reasonable to assume that $b$ cannot be aware of $\psi$ under the hypothesis above because no observation can help $b$ to infer the veracity of $\psi$.\\
It is worth observing that Proposition~\ref{prop:implications}(3) can be extended to inference chains of arbitrary length.
\begin{proposition}Let $\{\varphi_{1},\dots,\varphi_{n}\}\subseteq\mathrm{Fm}_{\mathsf{S}}$ ($n\geq 2$) and $a,b\in Ag$. Then
\[\vds\varphi_{i}\to\varphi_{i+1}\ (1\leq i\leq n)\text{ implies }\vds\sab\varphi_{1}\land\sab\varphi_{n}\to\bigwedge_{i=1}^{n}\sab\varphi_{i}.\]
\end{proposition}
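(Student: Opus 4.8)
The plan is to prove the statement by induction on $n$, using Proposition~\ref{prop:implications}(3) as the base case and inductive engine. First I would fix the hypothesis that $\vds \varphi_i \to \varphi_{i+1}$ for all $1 \leq i \leq n-1$ (note the index range should be $1 \leq i \leq n-1$ for the chain of $n$ formulas), and observe that by transitivity of $\vds$ applied to the classical implication, we obtain $\vds \varphi_i \to \varphi_j$ for any $i \leq j$. This is the key combinatorial fact that lets us apply the three-term interpolation repeatedly.

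For the base case $n = 2$, there is nothing to prove: $\sab\varphi_1 \land \sab\varphi_2 \to \sab\varphi_1 \land \sab\varphi_2$ is a tautology. For the inductive step, assume the claim holds for chains of length $n$, and consider a chain $\varphi_1, \dots, \varphi_{n+1}$ with $\vds\varphi_i \to \varphi_{i+1}$ for $1 \leq i \leq n$. By the remark above, we have $\vds \varphi_1 \to \varphi_n$ and $\vds \varphi_n \to \varphi_{n+1}$, and also $\vds \varphi_1 \to \varphi_i$ and $\vds \varphi_i \to \varphi_n$ for each $1 \leq i \leq n$. The strategy is to first derive $\sab\varphi_n$ from the endpoints $\sab\varphi_1$ and $\sab\varphi_{n+1}$ via Proposition~\ref{prop:implications}(3) applied to $\varphi_1 \to \varphi_n \to \varphi_{n+1}$; once $\sab\varphi_n$ is in hand together with $\sab\varphi_1$, the induction hypothesis applied to the sub-chain $\varphi_1, \dots, \varphi_n$ yields $\bigwedge_{i=1}^n \sab\varphi_i$. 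Conjoining this with $\sab\varphi_{n+1}$ gives $\bigwedge_{i=1}^{n+1}\sab\varphi_i$, completing the step.

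Alternatively, and perhaps more cleanly, one could avoid the nested induction: for each fixed $j$ with $1 \leq j \leq n$, apply Proposition~\ref{prop:implications}(3) directly to the chain $\varphi_1 \to \varphi_j \to \varphi_n$ (using $\vds\varphi_1 \to \varphi_j$ and $\vds\varphi_j \to \varphi_n$, both obtained from transitivity), concluding $\vds (\sab\varphi_1 \land \sab\varphi_n) \to \sab\varphi_j$. Taking the conjunction over all $j = 1, \dots, n$ immediately gives the result in one shot, since $\sab\varphi_1$ and $\sab\varphi_n$ appear as conjuncts anyway. I would present this second approach as the main argument, as it is shorter and the first is then just a remark.

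The proof has essentially no obstacle: the only point requiring a moment's care is the bookkeeping of the index range (the hypothesis $\vds\varphi_i \to \varphi_{i+1}$ ranges over $1 \leq i \leq n-1$, not $1 \leq i \leq n$, for a list of $n$ formulas — the excerpt's statement is slightly loose here) and the observation that $\vds$ composed with classical modus ponens gives transitivity of implications, which is immediate from propositional logic (A1) and (RMP). Everything else is a direct invocation of Proposition~\ref{prop:implications}(3) together with closure of $\vds$ under conjunction introduction.
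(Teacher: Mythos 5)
Your proposal is correct, and both of your variants work. Your first variant is essentially identical to the paper's own proof: induction on $n$ with base case $n=2$ trivial, and the inductive step extracting $\sab\varphi_{n}$ from the endpoints $\sab\varphi_{1}\land\sab\varphi_{n+1}$ via Proposition~\ref{prop:implications}(3) applied to the chain $\varphi_{1}\to\varphi_{n}\to\varphi_{n+1}$, then invoking the induction hypothesis on the sub-chain. Your preferred second variant, however, is a genuinely different and cleaner decomposition: rather than inducting, you apply the interpolation property once for each index $j$, to the chain $\varphi_{1}\to\varphi_{j}\to\varphi_{n}$, obtaining $\vds(\sab\varphi_{1}\land\sab\varphi_{n})\to\sab\varphi_{j}$ directly, and then conjoin over $j$. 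This buys a shorter, non-recursive argument that makes transparent why only the two endpoints are needed in the antecedent, at the cost of first establishing the (easy) transitive closure $\vds\varphi_{i}\to\varphi_{j}$ for all $i\leq j$; the paper's induction instead threads the endpoints through progressively shorter chains. Your observation that the index range in the statement should read $1\leq i\leq n-1$ is also correct; as written, $\varphi_{n+1}$ is undefined. One small point that neither you nor the paper addresses explicitly: Proposition~\ref{prop:implications}(3) is stated for $a\ne b$, whereas the present proposition allows arbitrary $a,b\in Ag$; the case $a=b$ is vacuously true since $\vds\neg S_{a,a}\varphi$ by Proposition~\ref{prop:propertiessab}(1), so the antecedent is unsatisfiable. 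A one-line case distinction (as the paper itself does in the analogous Proposition~\ref{prop:A}) would make either version fully rigorous.
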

\begin{proof}
By induction on $n$. The case $n=2$ is trivial. Now, assume that our claim holds for $n\geq 2$. By axioms of classical logic, one has $\vds\varphi_{1}\to\varphi_{n}$ which, jointly with $\vds \varphi_{n}\to\varphi_{n+1}$, implies, by Proposition~\ref{prop:implications}(3), $\vds \sab\varphi_{1}\land\sab\varphi_{n+1}\to\sab\varphi_{n}$. Also, a customary argument shows that $\vds \sab\varphi_{1}\land\sab\varphi_{n+1}\to\sab\varphi_{1}\land \sab\varphi_{n}$. Now, by induction hypothesis one has $\sab\varphi_{1}\land \sab\varphi_{n}\to\bigwedge_{i=1}^{n}\sab\varphi_{i}$. By the transitivity of $\to$, $\vds \sab\varphi_{1}\land\sab\varphi_{n+1}\to\bigwedge_{i=1}^{n}\sab\varphi_{i}$ and so, reasoning as above, $\vds \sab\varphi_{1}\land\sab\varphi_{n+1}\to\bigwedge_{i=1}^{n+1}\sab\varphi_{i}$. \qed
\end{proof}

We observe that an analogous result can be obtained also for the Xiong-{\AA}gotnes operator by putting in good use the same inductive argument and properties stated in Table \ref{tab:agot}.
We now reason about the secrecy of implication.

\begin{proposition}\label{prop:pierluigigraziani}Let $\varphi,\psi \in\mathrm{Fm}_{\mathsf{S}}$ and $a,b\in Ag$ with $a\ne b$. The following hold:
\begin{enumerate}
     \item $\vds B_{a}K_{b}\psi\lor I_{a}K_{b}\psi\to\neg\sab(\varphi\to\psi);$
     \item $\vds B_{a}K_{b}\neg\varphi\lor I_{a}K_{b}\neg\varphi\to\neg\sab(\varphi\to\psi).$
\end{enumerate}
\end{proposition}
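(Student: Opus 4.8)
The plan is to prove both implications semantically, using soundness (Theorem~\ref{compl-sound}) so that it suffices to show $\mathcal{S}$-validity of each formula. Both items have the same shape: a disjunction of two hypotheses, each of which should already block the third conjunct of $\sab(\varphi\to\psi)$, namely $I_{a}(\,(\varphi\to\psi)\wedge\neg K_{b}(\varphi\to\psi)\,)$. So fix an arbitrary $\mathcal{S}$-model $\mathcal{M}$ and a world $i$, and argue by cases on which disjunct holds, aiming in each case to derive $\mathcal{M},i\not\models\sab(\varphi\to\psi)$; since the two items are symmetric (replace $\psi$ by $\neg\varphi$, using that $\varphi\to\psi$ and $\varphi\to\neg(\neg\psi)$ coincide, or rather that $K_{b}\neg\varphi$ interacts with $\varphi\to\psi$ the same way $K_{b}\psi$ does via $K_{b}$ being normal and $\neg\varphi\to(\varphi\to\psi)$), I would prove item (1) in detail and indicate that (2) follows by the analogous computation starting from $\vds\neg\varphi\to(\varphi\to\psi)$.

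For item (1), first case: suppose $\mathcal{M},i\models B_{a}K_{b}\psi$. Assume towards a contradiction that $\mathcal{M},i\models\sab(\varphi\to\psi)$; then in particular $\mathcal{M},i\models B_{a}\neg K_{b}(\varphi\to\psi)$. Now use that $\psi\to(\varphi\to\psi)$ is a tautology, hence $\vds K_{b}\psi\to K_{b}(\varphi\to\psi)$ by $\mathrm{RN}_{K_{b}}$ and (A2), hence $\vds\neg K_{b}(\varphi\to\psi)\to\neg K_{b}\psi$, and then $\vds B_{a}\neg K_{b}(\varphi\to\psi)\to B_{a}\neg K_{b}\psi$ by a further application of $\mathrm{RN}_{B_{a}}$ and (A2) (this is exactly Proposition~\ref{prop:implications}(1) with $\varphi:=\psi$, $\psi:=\varphi\to\psi$). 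So $\mathcal{M},i\models B_{a}\neg K_{b}\psi$, which together with $\mathcal{M},i\models B_{a}K_{b}\psi$ contradicts seriality of $R^{B}_{a}$ (axiom A5: $B_{a}K_{b}\psi\to\neg B_{a}\neg K_{b}\psi$). Second case: suppose $\mathcal{M},i\models I_{a}K_{b}\psi$. Again assume $\mathcal{M},i\models\sab(\varphi\to\psi)$, so $\mathcal{M},i\models I_{a}\neg K_{b}(\varphi\to\psi)$ (a conjunct extracted from $I_{a}((\varphi\to\psi)\wedge\neg K_{b}(\varphi\to\psi))$ via distributivity of $I_{a}$ over $\wedge$). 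By the same monotonicity step as above, now applied under $I_{a}$ (Proposition~\ref{prop:implications}(2) with $\varphi:=\psi$), $\vds I_{a}\neg K_{b}(\varphi\to\psi)\to I_{a}\neg K_{b}\psi$, so $\mathcal{M},i\models I_{a}\neg K_{b}\psi$; together with $\mathcal{M},i\models I_{a}K_{b}\psi$ this contradicts A8 ($I_{a}\varphi\to\neg I_{a}\neg\varphi$), i.e. the seriality of $R^{I}_{a}$. Either case yields the contradiction, so $\mathcal{M},i\models B_{a}K_{b}\psi\vee I_{a}K_{b}\psi$ forces $\mathcal{M},i\not\models\sab(\varphi\to\psi)$, which is what we want.

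For item (2), the only change is the choice of tautology: $\neg\varphi\to(\varphi\to\psi)$ is a classical tautology, so $\vds K_{b}\neg\varphi\to K_{b}(\varphi\to\psi)$ and hence $\vds\neg K_{b}(\varphi\to\psi)\to\neg K_{b}\neg\varphi$; propagating this under $B_{a}$ resp. $I_{a}$ exactly as before, the hypothesis $B_{a}K_{b}\neg\varphi$ (resp. $I_{a}K_{b}\neg\varphi$) again clashes with $B_{a}\neg K_{b}\neg\varphi$ (resp. $I_{a}\neg K_{b}\neg\varphi$) via A5 (resp. A8). I do not expect any genuine obstacle here; the whole argument is a routine exploitation of normality of $K_{b}$, monotonicity of $B_{a}$ and $I_{a}$ (already isolated as Proposition~\ref{prop:implications}(1)--(2)), distributivity of $I_{a}$ over conjunction, and the consistency axioms A5 and A8. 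The only point requiring a little care is bookkeeping the two reductions ($K_{b}\psi$ for (1), $K_{b}\neg\varphi$ for (2)) and making sure the monotonicity direction is used correctly — from $\neg K_{b}(\varphi\to\psi)$ \emph{down} to $\neg K_{b}\psi$, not the other way — but this is exactly the direction supplied by Proposition~\ref{prop:implications}.
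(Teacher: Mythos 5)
Your proof is correct and follows essentially the same route as the paper's: the key tautology $\psi\to(\varphi\to\psi)$, normality of $K_{b}$, monotonicity under $B_{a}$ resp.\ $I_{a}$, and a clash with the consistency axioms A5 resp.\ A8. The only (immaterial) divergence is in item (2), which the paper obtains from item (1) via $(\varphi\to\psi)\leftrightarrow(\neg\psi\to\neg\varphi)$ and closure of $\sab$ under provable equivalence, whereas you rerun the argument directly from the tautology $\neg\varphi\to(\varphi\to\psi)$; both are fine.
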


\begin{proof}
\mbox{}
\begin{enumerate}
\item Let $\mathcal{M}=\mathcal{M}=(W,\{R^{I}_{a}\}_{a\in Ag},\{R^{K}_{a}\}_{a\in Ag},\{R^{B}_{a}\}_{a\in Ag},v)$ be an arbitrary $\mathcal{S}$-model and let $i\in W$. Assume that $\mathcal{M},i\models B_{a}K_{b}\psi\lor I_{a}K_{b}\psi$. Also, suppose towards a contradiction that $\mathcal{M},i\models B_{a}\neg K_{b}(\varphi\to\psi)$. Let us first consider the case $\mathcal{M},i\models B_{a}K_{b}\psi$. An easy check shows that $\mathcal{M}i\models B_{a}(K_{a}\psi\land \neg K_{b}(\varphi\to\psi))$. However, one has also that $\vds\psi\to(\varphi\to\psi)$ and so, by ($\mathrm{RN}_{K_{b}}$), (A2) and (RMP) $\vds K_{b}\psi\to K_{b}(\varphi\to\psi)$. Therefore, by ($\mathrm{RN}_{B_{a}}$), $\vds B_{a}(K_{b}\psi\to K_{b}(\varphi\to\psi))$. But then $\mathcal{M},i\models B_{a}(K_{a}\psi\land \neg K_{b}(\varphi\to\psi))\land B_{a}(K_{b}\psi\to K_{b}(\varphi\to\psi))$, which is a contradiction by (A6). Assuming that $\mathcal{M},i\models I_{a}K_{b}\psi$, and applying analogous reasoning, we reach the same conclusion by (A10).
\item The result follows from item 1 and from $\vds(\varphi\to\psi)\leftrightarrow(\neg\psi\to\neg\varphi)$. \qed
\end{enumerate}
\end{proof}

The intuition, based on the semantics of $\to$, is that the secrecy of $\varphi\to\psi$ decays when agent $b$ is supposed to know 
(or it is intended agent $b$ to know) either $\psi$ -- see item ($1$) -- or the negation of $\varphi$ -- see item ($2$). Note that, due to the way \cite{Xiong2023} formalizes it, $S_{a}$ satisfies both conditions of Proposition \ref{prop:pierluigigraziani}.
Inspired by the same line of reasoning, we emphasize the effect of conjunctions of information over secrets in the following results.

\begin{proposition}\label{prop:conjunctions}
Let $\varphi,\psi \in\mathrm{Fm}_{\mathsf{S}}$ and $a,b\in Ag$ with $a\ne b$. The following hold:
\begin{enumerate}
\item $\vds \sab\varphi\land K_{a}\psi\land I_{a}\psi\to\sab(\varphi\land\psi)$;
\item $\vds\sab\varphi\land\sab\psi\to\sab(\varphi\land\psi)$;
\item $\vds \sab(\varphi\land\psi)\land\sab(\varphi\lor\psi)\to\sab\varphi\land\sab\psi$.
\end{enumerate}
\end{proposition}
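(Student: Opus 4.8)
The plan is to prove each of the three items semantically, arguing over an arbitrary $\mathcal{S}$-model $\mathcal{M}=(W,\{R^{I}_{a}\}_{a\in Ag},\{R^{K}_{a}\}_{a\in Ag},\{R^{B}_{a}\}_{a\in Ag},v)$ and world $i\in W$, and using soundness (Theorem~\ref{compl-sound}) to transfer the conclusions back to $\vds$. Unfolding $\sab\chi$ always means establishing the three conjuncts $K_{a}\chi$, $B_{a}\neg K_{b}\chi$, and $I_{a}(\chi\land\neg K_{b}\chi)$, so in each item I will handle these three in turn, distributing $K_{a}$, $B_{a}$, $I_{a}$ over $\land$ as needed (each modality is normal by A2 and $\mathrm{RN}_{\star}$).

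For item (1): assume $\mathcal{M},i\models\sab\varphi\land K_{a}\psi\land I_{a}\psi$. From $K_{a}\varphi$ and $K_{a}\psi$ we get $K_{a}(\varphi\land\psi)$. For the belief conjunct, I first note $\vds(\varphi\land\psi)\to\varphi$, hence $\vds K_{b}(\varphi\land\psi)\to K_{b}\varphi$, hence (passing to the contrapositive and applying $B_{a}$) $\vds B_{a}\neg K_{b}\varphi\to B_{a}\neg K_{b}(\varphi\land\psi)$ — exactly the shape of Proposition~\ref{prop:implications}(1) — so $B_{a}\neg K_{b}(\varphi\land\psi)$ follows from $B_{a}\neg K_{b}\varphi$. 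For the intention conjunct, the same implication run through Proposition~\ref{prop:implications}(2) gives $I_{a}\neg K_{b}(\varphi\land\psi)$ from $I_{a}\neg K_{b}\varphi$; combining with $I_{a}\varphi$ (extracted from $I_{a}(\varphi\land\neg K_{b}\varphi)$) and the hypothesis $I_{a}\psi$ via distributivity of $I_{a}$ over $\land$ yields $I_{a}((\varphi\land\psi)\land\neg K_{b}(\varphi\land\psi))$. That closes (1). Item (2) is then immediate: from $\sab\psi$ we extract $K_{a}\psi$ and $I_{a}\psi$ (the latter again by distributing $I_{a}$ over $\land$ inside $I_{a}(\psi\land\neg K_{b}\psi)$), and apply item (1).

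Item (3) is the delicate one and I expect it to be the main obstacle, since here we are \emph{combining} two secrets into information about their components rather than decomposing a conjunction. Assume $\mathcal{M},i\models\sab(\varphi\land\psi)\land\sab(\varphi\lor\psi)$. The knowledge conjunct is easy: $K_{a}(\varphi\land\psi)$ already gives $K_{a}\varphi$ and $K_{a}\psi$. The work is in the $\neg K_{b}$ components. The key observation is that $\vds(\varphi\land\psi)\to\varphi\to(\varphi\lor\psi)$, so $\varphi$ sits \emph{between} $\varphi\land\psi$ and $\varphi\lor\psi$ in the entailment order; hence Proposition~\ref{prop:implications}(3) (the Interpolation Rule) applied with $\varphi\land\psi\vdash\varphi\vdash\varphi\lor\psi$ directly yields $(\sab(\varphi\land\psi)\land\sab(\varphi\lor\psi))\to\sab\varphi$, and symmetrically (swapping the roles of $\varphi,\psi$, using $\varphi\land\psi\vdash\psi\vdash\varphi\lor\psi$) $\to\sab\psi$. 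Conjoining the two gives $\sab\varphi\land\sab\psi$, which is exactly the conclusion. So in fact item (3) reduces cleanly to two instances of the already-established Interpolation Rule, and the only thing to be careful about is verifying the two three-term entailment chains in classical propositional logic, which is routine. I would therefore present (3) as a short corollary of Proposition~\ref{prop:implications}(3) rather than re-running a semantic argument.
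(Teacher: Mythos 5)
Your proposal is correct and follows essentially the same route as the paper: item (1) by distributing the modalities and using the monotonicity facts of Proposition~\ref{prop:implications}(1)--(2), item (2) as an immediate corollary of (1) via $\sab\psi\to K_{a}\psi\land I_{a}\psi$, and item (3) as two applications of the Interpolation Rule (Proposition~\ref{prop:implications}(3)) along the chains $\varphi\land\psi\to\varphi\to\varphi\lor\psi$ and $\varphi\land\psi\to\psi\to\varphi\lor\psi$. The only difference is presentational: you anticipated item (3) as the hard case before noticing it collapses to the Interpolation Rule, which is precisely how the paper handles it.
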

\begin{proof}
Let $\mathcal{M}=(W,\{R^{I}_{a}\}_{a\in Ag},\{R^{K}_{a}\}_{a\in Ag},\{R^{B}_{a}\}_{a\in Ag},v)$ be an arbitrary $\mathcal{S}$-model, and $i \in W$. Then:
\begin{enumerate}
    \item 
Note that $\mathcal{M},i\models\sab\varphi\land K_{a}\psi$ implies $\mathcal{M},i\models K_{a}\varphi\land K_{a}\psi$, and so also $\mathcal{M},i\models K_{a}(\varphi\land\psi)$. Moreover, it is easily seen that $\vds B_{a}\neg K_{b}\varphi\to B_{a}\neg K_{b}(\varphi\land\psi)$ as well as $\vds I_{a}\neg K_{b}\varphi\to I_{a}\neg K_{b}(\varphi\land\psi)$. Finally, since $\mathcal{M},i\models I_{a}\varphi\land I_{a}\psi$, one has $\mathcal{M},i\models I_{a}(\varphi\land\psi)$. Therefore. $\mathcal{M},i\models\sab\varphi\land K_{a}\psi\land I_{a}\psi\to K_{a}(\varphi\land\psi)\land B_{a}\neg K_{b}(\varphi\land\psi)\land I_{a}(\neg K_{b}(\varphi\land\psi)\land(\varphi\land\psi))=\sab(\varphi\land\psi)$. 
   \item 
The result follows directly by the previous item upon noticing that $\vds\sab\psi\to K_{a}\psi$. 
   \item 
The result follows by Proposition~\ref{prop:implications}(3) upon noticing that $\vds\varphi\land\psi\to\varphi$ and $\vds\varphi\land\psi\to\psi$ as well as $\vds\varphi\to\varphi\lor\psi$ and $\vds\psi\to\varphi\lor\psi$. \qed

\end{enumerate}
\end{proof}

The first two items express conditions implying a form of additivity of secrecy for conjunction. 
Regarding item ($1$), the secrecy of $\varphi$ is pursued by agent $a$ even if $\varphi$ is in conjunction with any other piece of information $\psi$ known (and intended to stay true) by agent $a$, otherwise revealing $\varphi \land \psi$ could expose $\varphi$ to agent $b$. A stronger result without the third conjunct is provable in \cite{Xiong2023}. Item ($2$) expresses a much stronger condition about $\psi$, which, for the previous case, is kept secret by agent $a$. However, it is worth noticing that the opposite direction for item ($2$) does not hold. Those results correspond to item 19 and 20 in Table \ref{tab:agot}. A weaker result concerning such a direction is exemplified in item ($3$), which could also be obtained in \cite{Xiong2023}, since this is an application of the interpolation rule. Hence, the secrecy of $(\varphi \land \psi)$, by itself, is insufficient to conclude anything about the secrecy of its constituents, as shown by the following result.

\begin{proposition}\label{prop:nonmonotonic}Let $\varphi,\psi \in\mathrm{Fm}_{\mathsf{S}}$ and $a,b\in Ag$ with $a\ne b$. The following holds:
$$\not\vds S_{a,b}(\varphi\land\psi)\to \sab\varphi\lor\sab\psi.$$
\end{proposition}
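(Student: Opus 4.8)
The plan is to build a single counter-model $\mathcal{M}$ witnessing $\mathcal{M},i\models S_{a,b}(\varphi\land\psi)$ while $\mathcal{M},i\not\models S_{a,b}\varphi$ and $\mathcal{M},i\not\models S_{a,b}\psi$. Since the hard constraints in $S_{a,b}(\varphi\land\psi)$ concern $K_a$, $B_a$ and $I_a$ of agent $a$ only (plus $\neg K_b$ of the conjunction), the idea is to make the conjunction $\varphi\land\psi$ behave exactly like a ``good'' secret for $a$, but arrange that $\varphi$ alone fails the $\neg K_b$ requirement and $\psi$ alone fails (say) the $K_a$ requirement. Concretely I would use two propositional variables $p,q$, set $\varphi:=p$, $\psi:=q$, and take a small frame (three or four worlds should suffice) with the relations for $a$ designed so that from $i$: every $R^K_a$- and $R^B_a$-successor satisfies $p\land q$ (so $K_a(p\land q)$, and hence $B_a(p\land q)$, hold, giving $K_a(\varphi\land\psi)$), every $R^I_a$-successor satisfies $p\land q\land\neg K_b(p\land q)$, and every $R^B_a$-successor satisfies $\neg K_b(p\land q)$.

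First I would fix the worlds and the $b$-relations so that $K_b(p\land q)$ is false at the relevant worlds but, say, $K_b p$ holds at some $R^B_a$-successor of $i$ — this breaks $B_a\neg K_b\varphi$, so $S_{a,b}\varphi$ fails. Next, for $\psi=q$, I would arrange that $q$ is true at all $R^K_a$- and $R^I_a$-successors needed for the conjunction but is \emph{false} at some $R^K_a$-successor reached only via the extra part of the knowledge relation, so that $K_a q$ fails at $i$; this kills $S_{a,b}\psi$. One clean way: let $W=\{i,j,u\}$ with $R^K_a=R^B_a=\Delta_W\cup\{(i,u)\}$ and $R^I_a=\{(i,j),(j,j),(u,u)\}$ (adjusting so transitivity and the frame conditions (3)--(8) of Definition~\ref{def:s-mod} hold), put $p,q$ true at $i$ and $j$, put $p$ true and $q$ false at $u$, and set the $b$-relations so that $K_b(p\land q)$ fails everywhere but $K_b p$ holds at $u$.

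Then I would run the routine verification: (i) the tuple is genuinely an $\mathcal S$-frame — this means checking seriality of $R^B_a$, reflexivity/transitivity of $R^K_a$, seriality/transitivity of $R^I_a$, the inclusion $R^B_a\subseteq R^K_a$, and the three composition conditions $R^K_a;R^I_a\subseteq R^I_a$, $R^K_a;R^B_a\subseteq R^B_a$, $R^I_a;R^K_a\subseteq R^I_a$ (and trivially for the other agents, whose relations I take to be $\Delta_W$ except where $K_b$ is needed); (ii) $\mathcal M,i\models K_a(p\land q)\land B_a\neg K_b(p\land q)\land I_a\big((p\land q)\land\neg K_b(p\land q)\big)$, i.e.\ $\mathcal M,i\models S_{a,b}(p\land q)$; (iii) $\mathcal M,i\not\models B_a\neg K_b p$ (because of the $u$-world where $K_b p$ holds), hence $\mathcal M,i\not\models S_{a,b}p$; and (iv) $\mathcal M,i\not\models K_a q$ (because $q$ is false at $u$ and $R^K_a(i,u)$), hence $\mathcal M,i\not\models S_{a,b}q$.

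The main obstacle is purely bookkeeping rather than conceptual: one must choose the $b$-relations and the truth set of $q$ at $u$ so that the frame conditions linking $K$, $B$, $I$ (items 5--8 of Definition~\ref{def:s-mod}) are not violated by the extra pair $(i,u)$ added to $R^K_a=R^B_a$ — in particular transitivity of $R^K_a$ and $R^B_a$ forces $R^K_a;R^I_a$ and $R^K_a;R^B_a$ to be reexamined, so the $R^I_a$- and $R^B_a$-successors of $u$ must all be reachable from $i$ as well. Keeping $u$'s $I_a$- and $B_a$-successor to be $u$ itself (reflexive) and checking $R^K_a(i,u)$, $R^I_a(u,u)\Rightarrow R^I_a(i,u)$ — which forces adding $(i,u)$ to $R^I_a$, so I should instead route the $I_a$ relation to keep $u$ out of the picture for $S_{a,b}(p\land q)$, e.g.\ by making $u$ a dead end under $R^I_a$ is impossible by seriality, so the cleanest fix is to send $R^I_a(u,u)$ and accept $R^I_a(i,u)$, then ensure $p\land q\land\neg K_b(p\land q)$ holds at $u$ too by simply making $q$ true at $u$ but adding a \emph{fourth} world $w$ with $R^K_a(i,w)$, $q$ false at $w$, and $w$ outside the $I_a$- and $B_a$-images of $i$. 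I would finalize the exact four-world model along these lines and then the four checks above are immediate.
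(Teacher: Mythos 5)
There is a genuine gap, and it is fatal to half of your construction. You propose to make $S_{a,b}\psi$ fail by breaking the conjunct $K_a\psi$, i.e.\ by placing an $R^K_a$-successor $u$ (or $w$) of $i$ at which $q$ is false. But $K_a$ is a normal modality, so $\vds K_a(p\land q)\to K_a q$: any world witnessing the failure of $K_a q$ at $i$ also witnesses the failure of $K_a(p\land q)$ at $i$, and hence of $S_{a,b}(p\land q)$ itself. Your phrase ``$q$ is true at all $R^K_a$-successors needed for the conjunction but false at some $R^K_a$-successor reached only via the extra part of the knowledge relation'' is incoherent --- \emph{every} $R^K_a$-successor of $i$ is quantified over in evaluating $K_a(p\land q)$, so there is no successor that matters for $K_a q$ but not for the conjunction. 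The same objection applies to your final patch (the fourth world $w$ with $R^K_a(i,w)$ and $q$ false at $w$). More generally, since $K_a(\varphi\land\psi)\vds K_a\psi$ and $I_a(\varphi\land\psi\land\neg K_b(\varphi\land\psi))\vds I_a\psi$, the \emph{only} conjuncts of $S_{a,b}\psi$ that can fail while $S_{a,b}(\varphi\land\psi)$ holds are $B_a\neg K_b\psi$ and $I_a\neg K_b\psi$; these are the non-monotone parts, because $K_b\psi$ does not entail $K_b(\varphi\land\psi)$.

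Your treatment of $\varphi=p$ is the right mechanism (a $B_a$-successor of $i$ where $K_b p$ holds but $K_b(p\land q)$ fails), and this is exactly what the paper does --- but the paper applies the \emph{same} mechanism symmetrically to both conjuncts. In the paper's model, $i$ has two $B_a$-successors $i_1,i_2$; at $i_1$ agent $b$'s accessible worlds all satisfy $p$ but not all satisfy $q$ (so $K_b p$ holds, $K_b(p\land q)$ fails there), while at $i_2$ the situation is reversed (so $K_b q$ holds, $K_b(p\land q)$ fails there). Thus $B_a\neg K_b(p\land q)$ holds at $i$ while both $B_a\neg K_b p$ and $B_a\neg K_b q$ fail. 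To repair your proof, discard the $K_a$-based attack on $\psi$ entirely and add a second ``bad'' $B_a$-successor where $K_b q$ holds (with $K_b(p\land q)$ still failing), then redo the frame-condition bookkeeping.
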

\begin{proof}
Let us consider the structure $$\mathcal{M}=(\{i,i_{1},i_{2},i_{3},i_{4}\},\{R^{I}_{a}\}_{a\in Ag},\{R^{K}_{a}\}_{a\in Ag},\{R^{B}_{a}\}_{a\in Ag},v)$$ such that, for a fixed $a\in Ag$, one has
    \begin{itemize}
        \item $R^{K}_{a}=\Delta_{W}\cup\{(i,i_{1}),(i,i_{2}),(i_{1},i_{2}),(i_{2},i_{1})\}$;
        \item $R^{B}_{a}=R^{I}_{a}=\Delta_{w}\smallsetminus\{(i,i)\}\cup\{(i,i_{1}),(i,i_{2}),(i_{1},i_{2}),(i_{2},i_{1})\}$.
        \end{itemize}
and for any $b\ne a$
\begin{itemize}
        \item $R^{K}_{b}=R^{I}_{b}=\Delta_{W}\cup\{(i_{2},i_{4}),(i_{4},i_{2}),(i_{1},i_{3}),(i_{3},i_{1})\}$;
        \item $R^{B}_{b}=\Delta_{W}\cup\{(i_{2},i_{4}),(i_{1},i_{3})\}$.
    \end{itemize}
Also, let $v(p)=\{i,i_{1},i_{2},i_{3}\}$ and $v(q)=\{i,i_{1},i_{2},i_{4}\}$. Upon extending $v$ to an evaluation over $\mathrm{Fm}_{\mathsf{S}}$, one has $\mathcal{M},i\models\sab(p\land q)$. However, since $M,i_{2}\models K_{b}p$ and $M,i_{1}\models K_{b}p$ and $M,i_{2}\models K_{b}q$, one has that $\mathcal{M},i\not\models B_{a}\neg K_{b}p$ as well as $\mathcal{M},i\not\models B_{a}\neg K_{b}q$. Consequently, our result was obtained by definition. \qed
\end{proof}

The intuition behind such a negative result is that even though agent $a$ believes that $(\varphi \land \psi)$ is unknown to agent $b$, agent $a$ has no reason to believe that $\varphi$ (or $\psi$), alone, is unknown to agent $b$, which is a condition necessary to determine the secrecy of $\varphi$ (or of $\psi$). The same intuition applies to the intention $I_a(\neg K_b (\varphi \land \psi))$, from which we can derive neither $I_a(\neg K_b \varphi)$
nor $I_a(\neg K_b \psi)$. In other words, agent $a$ cannot decide which among $\varphi$ and $\psi$ is worth hiding to agent $b$. Note that Proposition   \ref{prop:nonmonotonic} shows, among other things, that the $\sab$ operator is non-monotonic, therefore mirroring an analogous property of the $S_{a}$ operator (see item 24 from Table \ref{tab:agot}).

Regarding information disjunction, we have the following results related to secrecy. 

\begin{proposition}
Let $\varphi,\psi\in\mathrm{Fm}_{\mathsf{S}}$ and $a,b\in Ag$ with $a\ne b$. The following hold:
\begin{enumerate}
\item $\vds\sab(\varphi\lor\psi)\land K_{a}\varphi\land I_{a}\varphi\to\sab\varphi$;
\item $\vds \sab(\varphi\lor\psi) \land \sab(\neg\varphi) \to \sab\psi$;
\item $\not\vds\sab(\varphi\lor\psi)\to\sab\varphi\lor\sab\psi$;
\item $\not\vds\sab\varphi\lor\sab\psi\to\sab(\varphi\lor\psi)$.
\end{enumerate}
\end{proposition}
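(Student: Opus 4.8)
The plan is to prove the four items separately, the first two by syntactic/semantic reasoning from the axioms and the last two by exhibiting counter-models, in the same style as the preceding propositions.

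For item (1), I would argue semantically in an arbitrary $\mathcal{S}$-model $\mathcal{M}$ at a world $i$. Assume $\mathcal{M},i\models\sab(\varphi\lor\psi)\land K_{a}\varphi\land I_{a}\varphi$. From $\sab(\varphi\lor\psi)$ I extract $K_{a}(\varphi\lor\psi)$, $B_{a}\neg K_{b}(\varphi\lor\psi)$, and $I_{a}\bigl(\neg K_{b}(\varphi\lor\psi)\land(\varphi\lor\psi)\bigr)$. The first conjunct of the conclusion, $K_{a}\varphi$, is given directly. For $B_{a}\neg K_{b}\varphi$: since $\vds\varphi\to(\varphi\lor\psi)$, I get $\vds K_{b}\varphi\to K_{b}(\varphi\lor\psi)$ and hence $\vds\neg K_{b}(\varphi\lor\psi)\to\neg K_{b}\varphi$; pushing this through $B_{a}$ via (A2) and (RMP) gives $B_{a}\neg K_{b}\varphi$ from $B_{a}\neg K_{b}(\varphi\lor\psi)$. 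The analogous move through $I_{a}$ yields $I_{a}\neg K_{b}\varphi$ from $I_{a}\neg K_{b}(\varphi\lor\psi)$ (using the distributivity of $I_{a}$ over $\land$ to peel off the first conjunct first). Finally $I_{a}\varphi$ is given, so combining $I_{a}\neg K_{b}\varphi$ and $I_{a}\varphi$ via distributivity of $I_{a}$ over $\land$ gives $I_{a}(\neg K_{b}\varphi\land\varphi)=I_{a}T_{b}\varphi$, and we are done. This is the routine pattern already used in Proposition \ref{prop:conjunctions}(1), just with $\lor$ in place of $\land$ and the implication running the other way.

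For item (2), I would use the interpolation machinery rather than a direct model argument. Note $\vds(\varphi\lor\psi)\to(\varphi\lor\psi)$ trivially, but the cleaner route is: $\vds\neg\varphi\to(\varphi\lor\psi)$ is false, so instead observe $\vds\psi\to(\varphi\lor\psi)$ and $\vds\neg\varphi\land(\varphi\lor\psi)\to\psi$. A direct semantic argument is safest: assume $\mathcal{M},i\models\sab(\varphi\lor\psi)\land\sab\neg\varphi$. From the $K_{a}$-conjuncts, $K_{a}(\varphi\lor\psi)$ and $K_{a}\neg\varphi$ give $K_{a}\psi$. From the $I_{a}$-conjuncts, distributing over $\land$ gives $I_{a}(\varphi\lor\psi)$ and $I_{a}\neg\varphi$, and since $\vds((\varphi\lor\psi)\land\neg\varphi)\to\psi$, pushing through $I_{a}$ (via $\mathrm{RN}_{I_a}$, (A2), (RMP), and distributivity) yields $I_{a}\psi$. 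For the belief conjunct, $\vds K_{b}\psi\to K_{b}(\varphi\lor\psi)$ gives $\vds\neg K_{b}(\varphi\lor\psi)\to\neg K_{b}\psi$, hence $B_{a}\neg K_{b}(\varphi\lor\psi)\to B_{a}\neg K_{b}\psi$; similarly $I_{a}\neg K_{b}(\varphi\lor\psi)\to I_{a}\neg K_{b}\psi$. Assembling $K_{a}\psi$, $B_{a}\neg K_{b}\psi$, $I_{a}\neg K_{b}\psi$, and $I_{a}\psi$ gives $\sab\psi$.

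For items (3) and (4), the work is to build finite $\mathcal{S}$-models falsifying the two non-implications, following the template of Propositions \ref{prop:notnegcompl} and \ref{prop:nonmonotonic}. For (3), I want a world where $a$ keeps $\varphi\lor\psi$ secret from $b$ but believes (or intends) neither $\neg K_b\varphi$ nor $\neg K_b\psi$ — e.g., make $b$ know $\varphi$ at some $R^{B}_{a}$-successor and $b$ know $\psi$ at another, while $b$ never knows $\varphi\lor\psi$ is unknown is automatically handled by keeping both $\varphi$ and $\psi$ false at worlds where $b$ is "stuck"; one checks $v$ so that $\varphi\lor\psi$ holds where needed for $K_a$, $I_a$, and fails for $b$'s ignorance. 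For (4), I want $a$ to keep $\varphi$ secret from $b$ at $i$ but fail to keep $\varphi\lor\psi$ secret, the obvious obstruction being that $a$ does not believe $\neg K_b(\varphi\lor\psi)$ because at some $R^{B}_{a}$-successor $\psi$ holds and $b$ knows it (hence $b$ knows $\varphi\lor\psi$), or because $a$ simply does not intend $\varphi\lor\psi$. The main obstacle in both is bookkeeping: the eight frame conditions of Definition \ref{def:s-mod} must be verified for each candidate model, and one must simultaneously arrange the valuation so that all three modal conjuncts of $\sab$ on the antecedent side hold while the needed conjunct on the consequent side fails. This is the same kind of careful-but-routine case check carried out in Proposition \ref{prop:nonmonotonic}, and I would present the models explicitly and remark that verification of the frame conditions and of the truth values at each world is a direct computation left to the reader.
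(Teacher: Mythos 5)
Your items (1) and (2) are correct and complete in outline: the three conjuncts of $\sab\varphi$ (resp.\ $\sab\psi$) are assembled exactly as you describe, using normality of $K_b$, $B_a$, $I_a$ together with the tautologies $\varphi\to(\varphi\lor\psi)$, $\psi\to(\varphi\lor\psi)$ and $((\varphi\lor\psi)\land\neg\varphi)\to\psi$. The paper itself gives no proof of this proposition (it is among those deferred to the reader, with a pointer to analogous counter-models in the Xiong--{\AA}gotnes setting), so for (1) and (2) there is nothing to diverge from.

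There is, however, a genuine error in your strategy for item (3). You propose to falsify $\sab\varphi\lor\sab\psi$ by making $K_b\varphi$ hold at one $R^{B}_{a}$-successor and $K_b\psi$ at another. But your own argument for item (1) shows that $B_{a}\neg K_{b}(\varphi\lor\psi)$ entails \emph{both} $B_{a}\neg K_{b}\varphi$ and $B_{a}\neg K_{b}\psi$ (and likewise for $I_a$): if $b$ knew $\varphi$ at some $R^{B}_{a}$-successor, $b$ would know $\varphi\lor\psi$ there, destroying the antecedent $\sab(\varphi\lor\psi)$. So the belief and intention-of-ignorance conjuncts of $\sab\varphi$ and $\sab\psi$ can never be the ones that fail; the only available obstructions are $K_{a}\varphi$, $K_a\psi$, $I_{a}\varphi$, $I_a\psi$. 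The counter-model must therefore be the classical one in which $a$ knows (and intends) the disjunction without knowing (or intending) either disjunct --- e.g.\ $i$ has $R^{K}_{a}$-successors $j_{1}\in v(p)\smallsetminus v(q)$ and $j_{2}\in v(q)\smallsetminus v(p)$. Your item (4) obstruction is correct ($b$ may know $\psi$, hence $\varphi\lor\psi$, at an $R^{B}_{a}$-successor without knowing $\varphi$), but the fallback you offer, that ``$a$ simply does not intend $\varphi\lor\psi$'', is not available either: $\sab\varphi$ yields $I_{a}\varphi$ and hence $I_{a}(\varphi\lor\psi)$ by normality, so only the $B_{a}\neg K_{b}(\varphi\lor\psi)$ and $I_{a}\neg K_{b}(\varphi\lor\psi)$ conjuncts can fail. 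Finally, for both (3) and (4) you stop short of exhibiting the models and verifying the eight frame conditions of Definition~\ref{def:s-mod}; since that verification is where essentially all of the work for the negative items lies (as in Propositions~\ref{prop:notnegcompl} and~\ref{prop:nonmonotonic}), the proof is incomplete as it stands.
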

    
As opposed to the case of additivity of conjunction, item ($1$) expresses a form of contraction of secrecy for disjunction. If agent $a$ intends to preserve the secrecy of $\varphi \lor \psi$ and simultaneously intends to maintain the veracity of $\varphi$, then she must preserve the secrecy of $\varphi$. A stronger result without the third conjunct can be easily proven in \cite{Xiong2023}.
Item ($2$) elaborates more on the effect of a secret disjunction.
As an illustrative example, let us consider the following home-banking access scenario. If an agent $a$ intends to keep her access policy ``using a strong password'' ($\varphi$) or ``using a 2-factor authentication system'' ($\psi$) secret from any adversary agent $b$ -- as formally represented by $\sab(\varphi\lor\psi)$ -- and, moreover, agent $a$ intends to keep ``using a weak password'' ($\neg\varphi$) secret from $b$ -- formally, $\sab(\neg\varphi$) -- one must conclude that $a$ is keeping ``using a 2-factor authentication system'' secret from $b$, namely $\sab\psi$. Item (2) can easily be obtained in \cite{Xiong2023}, see item 18 from Table \ref{tab:agot}. Moreover, in \cite{Xiong2023}, similar counter-models can be built for items (3) and (4).

As anticipated at the beginning of Section~\ref{sec:relation between},
we now use our logical framework to reason about important principles based on any ZK proof~\cite{10.1137/0218012}. Let us consider the following result.

\begin{proposition}\label{prop:ZKproof}
Let $\varphi\in\mathrm{Fm}_{\mathsf{S}}$ and $a,b\in Ag$ with $a\ne b$. Then:
\[ \not\vds \sab\varphi\land K_{a}(\varphi\to\psi)\land K_{b}\psi\to K_{b}\varphi. \]
\end{proposition}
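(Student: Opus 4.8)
Since the statement is a non-derivability claim, the plan is to use the soundness direction of Theorem~\ref{compl-sound}: it suffices to produce a single $\mathcal{S}$-model $\mathcal{M}$ and a world $i$ such that $\mathcal{M},i\models\sab\varphi\wedge K_{a}(\varphi\to\psi)\wedge K_{b}\psi$ while $\mathcal{M},i\not\models K_{b}\varphi$; then $\not\vdss(\sab\varphi\wedge K_{a}(\varphi\to\psi)\wedge K_{b}\psi)\to K_{b}\varphi$, hence the formula is not $\mathsf{S}$-derivable. Conceptually this is the zero-knowledge phenomenon: agent $b$ coming to know a logical consequence $\psi$ of the secret $\varphi$ need not reveal $\varphi$ itself, because $\psi$ can be true at worlds $b$ considers possible in which $\varphi$ fails.

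\textbf{The model.} I would take $W=\{i,j\}$, identify $\varphi$ with a variable $p$ and $\psi$ with a variable $q$, and set $v(p)=\{i\}$ and $v(q)=\{i,j\}$ (all other variables false everywhere). For agent $a$, let all three accessibility relations be the diagonal $\Delta_{W}$. For agent $b$ (and, with any legitimate choice such as the diagonal for the remaining agents), let $R^{K}_{b}=R^{B}_{b}=R^{I}_{b}:=\Delta_{W}\cup\{(i,j)\}$.

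\textbf{Verification.} First I would check the frame conditions of Definition~\ref{def:s-mod}. For $a$ everything is diagonal, so reflexivity, transitivity, seriality, and the interaction clauses 5--8 are immediate. For $b$ the single relation $\Delta_{W}\cup\{(i,j)\}$ is reflexive, serial, and transitive (the only new composite is $(i,j)$ followed by $(j,j)$, again $(i,j)$), and since all three of $b$'s relations coincide, conditions 5--8 together with $R^{B}_{b}\subseteq R^{K}_{b}$ all reduce to this transitivity. Then I evaluate at $i$: $p$ and $q$ are true at $i$, so $K_{a}p$ and $K_{a}(p\to q)$ hold since $i$ is the only $R^{K}_{a}$-successor of $i$; because $R^{K}_{b}(i,j)$ and $j\notin v(p)$, we get $\mathcal{M},i\not\models K_{b}p$ --- exactly the failure of the consequent --- and consequently $B_{a}\neg K_{b}p$ and $I_{a}(p\wedge\neg K_{b}p)$ hold at $i$, the only $R^{B}_{a}$- and $R^{I}_{a}$-successor of $i$ being $i$ itself, where $p\wedge\neg K_{b}p$ is true; hence $\mathcal{M},i\models\sab p$. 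Finally $\mathcal{M},i\models K_{b}q$ because both $R^{K}_{b}$-successors $i,j$ lie in $v(q)$.

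\textbf{Main obstacle.} The only delicate point is the design of $b$'s relations. Factivity of $K_{a}$ and $K_{b}$, applied to the conjuncts $K_{a}\varphi$ and $K_{b}\psi$, forces both $\varphi$ and $\psi$ true at $i$, so the world witnessing $\neg K_{b}\varphi$ must be a genuinely distinct world $j$ with $\varphi$ false and $\psi$ true; at the same time the $R^{K}_{b}$-closure conditions 7--8, the inclusion $R^{B}_{b}\subseteq R^{K}_{b}$, and seriality of $R^{B}_{b}$ and $R^{I}_{b}$ must all be met. Taking all three of $b$'s relations equal to one reflexive transitive relation carrying the single extra edge $(i,j)$ is the most economical way to satisfy these requirements simultaneously; everything else is routine evaluation.
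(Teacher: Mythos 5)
Your proposal is correct and follows essentially the same strategy as the paper: exhibit a concrete $\mathcal{S}$-model falsifying the formula at some world and invoke soundness. Your two-world model (with $v(p)=\{i\}$, $v(q)=\{i,j\}$, diagonal relations for $a$, and $\Delta_{W}\cup\{(i,j)\}$ for all of $b$'s relations) verifies all the frame conditions of Definition~\ref{def:s-mod} and is in fact more economical than the four-world model used in the paper.
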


\begin{proof}
Let us consider the $\mathcal{S}$-model 
\[\mathcal{M}=(W=\{i,j,k,w\}.\{R^{I}_{a}\}_{a\in Ag},\{R^{K}_{a}\}_{a\in Ag},\{R^{B}_{a}\}_{a\in Ag},v)\] such that, for a fixed $a\in Ag$,
\begin{itemize}
    \item $R^{K}_{a}:=\Delta_{W}\cup\{(i,j)\}$;
    \item $R^{I}_{a} = R^{B}_{a}:=R^{K}_{a}\smallsetminus\{(i,i)\}$;
\end{itemize}
and, for any $b\ne a$,
\begin{itemize}
    \item $R^{K}_{b}:=\Delta_{W}\cup\{(i,j),(j,w)\}$;
    \item $R^{I}_{b}=R^{B}_{b}:= R^{K}_{b}\smallsetminus\{(i,i)\}$.
\end{itemize}
A routine check shows that $(W=\{i,j,k,w\}.\{R^{I}_{a}\}_{a\in Ag},\{R^{K}_{a}\}_{a\in Ag},\{R^{B}_{a}\}_{a\in Ag})$ is indeed an $\mathcal{S}-$frame. Moreover, fix $p,q\in Var$ and let $v:Var\to\mathcal{P}(W)$ be such that $v(p)=\{i,j\}$ and $v(q)=\{i,j,k\}$. Upon extending $v$ to an evaluation on the whole $\mathrm{Fm}_{\mathsf{S}}$, one has that $\mathcal{M},i\models K_{a}(p\land q)$ and so also $\mathcal{M},i\models K_{a}(p\to q)\land\sab p\land K_{b}q$. However, $\mathcal{M},i\not\models p$, since $\mathcal{M},{k}\models\neg p$.  \qed
\end{proof}

We illustrate the underlying intuition through an example from the setting of public key cryptography, which is a typical application domain of ZK proofs. Suppose that, as is common practice, the private key for the digital signature of agent $a$ is kept secret from any other agent $b$ ($\sab\varphi$). Agent $a$ knows that such a key is used to generate a valid signature on a particular document ($K_{a}(\varphi\to\psi)$). The ZK proof based on the use of the public key of agent $a$ allows agent $b$ to know that such a document is indeed signed by agent $a$ ($K_{b}\psi$). However, in no way does agent $b$ learn anything about the secret key of agent $a$. Therefore, $K_{b}\varphi$ cannot be a consequence of the previous conditions. Again, a counter-model for the formula of Proposition \ref{prop:ZKproof} can also be constructed in \cite{Xiong2023}.

Now, we investigate how our logical framework can be employed to reason about the noninterference approach to information flow analysis. First, let us observe a preliminary property of the secrecy operator. 
Since $S_{a,b}\varphi$ does not establish any certainty about 
the actual secrecy of $\varphi$, it is compatible with scenarios in which agent $a$ intends to keep $\varphi$ secret from agent $b$ and, at the same time, agent $b$ intends to keep $\varphi$ secret from agent $a$.
Consequently, we also have that $S_{a,b}\varphi$ is compatible even with a scenario in which, by any circumstances, agent $b$ acquires the knowledge of $\varphi$ despite the efforts of agent $a$. The following proposition formalizes the compatibilities above showing, among other things, the failure of items 1 and 8 from Table \ref{tab:agot}.

\begin{proposition}\label{prop:notperfsecret} Let $\varphi\in\mathrm{Fm}_{\mathsf{S}}$ and $a,b\in Ag$ with $a\ne b$. Then, the following hold:
\begin{enumerate}
\item $\not\vds S_{a,b}\varphi\to\neg S_{b,a}\varphi$;
    \item $\not\vds S_{a,b}\varphi\to\neg K_{b}\varphi$.
\end{enumerate}
\end{proposition}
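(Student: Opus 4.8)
The plan is to exhibit, for each of the two claims, a single $\mathcal{S}$-model together with a world at which the antecedent $S_{a,b}\varphi$ holds but the alleged consequent fails. Since both claims concern formulas involving only the agents $a$ and $b$, I would work with a small finite world-set and define the relations for $a$ and for every other agent $c\neq a$ (taking the latter to be essentially trivial, i.e., close to the identity, except for $b$), then check Definition~\ref{def:s-mod} clauses 1--8 hold, which for such small diagrams is a routine inspection.

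For item (1), $\not\vds S_{a,b}\varphi\to\neg S_{b,a}\varphi$, the key is a model symmetric in $a$ and $b$ with respect to a single propositional variable $p$. I would take worlds $W=\{i,j,k\}$ (or similar) and arrange: $R^{K}_{a}$, $R^{K}_{b}$ reflexive-transitive so that $p$ is true at every world $R^{K}_{a}$- and $R^{K}_{b}$-reachable from $i$ (so $K_a p$ and $K_b p$ both hold at $i$, witnessing $a,b$ both know $p$ but don't realise the other does); then $R^{B}_{a}\subseteq R^{K}_{a}$ chosen so that from $i$, agent $a$'s belief-accessible worlds all satisfy $\neg K_b p$ (e.g. by sending $a$'s belief arrow to a world where $b$'s knowledge relation "forgets" $p$), and symmetrically for $b$; and $R^{I}_{a}$, $R^{I}_{b}$ serial-transitive and compatible with the $K$/$B$ relations (clauses 3,5,8) sending $a$'s intention arrows to worlds validating $p\wedge\neg K_b p$ and symmetrically. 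Then $\mathcal{M},i\models S_{a,b}p$ and $\mathcal{M},i\models S_{b,a}p$, so the implication fails. This is exactly the "non-exclusive secrets" phenomenon announced after Proposition~\ref{prop:propertiessab}, so the model of Proposition~\ref{prop:notnegcompl} or Proposition~\ref{prop:nonmonotonic} can be adapted.

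For item (2), $\not\vds S_{a,b}\varphi\to\neg K_{b}\varphi$, I need a world $i$ with $S_{a,b}p$ but $K_b p$. The point is that $B_a\neg K_b p$ only constrains $a$'s belief-accessible worlds, not the evaluation world itself; so I put $p$ true everywhere $R^{K}_b$-reachable from $i$ (giving $K_b p$ at $i$) while still routing $R^{B}_a$ and $R^{I}_a$ from $i$ into a "doxastic alternative" where $b$'s knowledge relation leads to a $\neg p$-world, so $\neg K_b p$ holds there. I must double-check clause 6 ($R^{B}_a\subseteq R^{K}_a$) and clause 7 are respected — this forces the belief-alternative to also be $K_a$-accessible, which is fine as long as $p$ stays true there (factivity-compatible), with the $b$-ignorance happening one $R^{K}_b$-step further out.

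The main obstacle in both cases is the same: verifying the frame conditions, particularly the three "mixed" composition clauses (5, 7, 8) linking $R^{K}$ with $R^{I}$ and $R^{B}$, and seriality/transitivity of $R^{I}$ and $R^{B}$, without accidentally forcing $p$ to propagate to a world where I need $\neg p$ (which would destroy either $K_b p$, in item (2), or one of the $B_a\neg K_b p$ / $I_a\neg K_b p$ conjuncts). Keeping the "ignorance witness" for $b$ two relational steps away from $i$ — so that $K_b p$ can still hold at $i$ while $a$'s doxastic/intentional alternatives see a $b$-reachable $\neg p$ world — is the delicate bookkeeping; once the diagram is drawn, the truth-value computations are mechanical.
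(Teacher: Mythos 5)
Your proposal is correct and matches the paper's proof in its essentials: the paper likewise refutes item (1) with a single model symmetric in $a$ and $b$ (five worlds $\{i,j,w,u,k\}$, with $p$ true everywhere $R^{K}_{a}$- and $R^{K}_{b}$-reachable from $i$ and each agent's belief/intention arrows routed to a world from which the other agent's knowledge relation reaches a $\neg p$-world), and then observes that item (2) follows immediately from item (1), since $S_{b,a}\varphi$ contains the conjunct $K_{b}\varphi$ — so your separate second model for (2) is unnecessary, though harmless. One small bookkeeping point: three worlds will not suffice for (1), since $i$, the two distinct belief-alternatives of $a$ and $b$ (which cannot coincide, as each must fail to be $K$-accessible from $i$ for the other agent), and at least one $\neg p$-world are all forced to be distinct, so you need at least four worlds, consistent with your ``or similar'' hedge.
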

\begin{proof}
First, note that (2) follows by (1). Then, let us consider the following $\mathcal{S}$-model $$\mathcal{M}=(W=\{i,j,w,u,k\},\{R^{I}_{a}\}_{a\in Ag},\{R^{K}_{a}\}_{a\in Ag},\{R^{B}_{a}\}_{a\in Ag},v)$$ such that 
\begin{itemize}
    \item $R^{K}_{a}=\Delta_{W}\cup\{(i,j),(w,k)\}$;
    \item $R^{B}_{a}=R^{I}_{a}:=\{(i,j),(j,j),(u,u),(w,k),(k,k)\}$,
\end{itemize}
and, for any $b\in Ag\smallsetminus\{a\}$:
\begin{itemize}
    \item $R^{K}_{b}:=\Delta_{W}\cup\{(i,w),(j,u)\}$;
    \item $R^{B}_{b}=R^{I}_{b}=\{(i,w),(w,w),(k,k),(u,u),(j,u)\}$.
\end{itemize}
Let $v$ be such that $v(p)=\{i,j,w\}$. Upon extending $v$ to an evaluation over the whole $\mathrm{Fm}_{\mathsf{S}}$, it can be verified that $\mathcal{M},i\models\sab\varphi$ as well as $\mathcal{M},i\models S^{*}_{b,a}\varphi$. \qed
\end{proof}

Those results clearly hold also in \cite{Xiong2023} whenever factivity for the knowledge operator is dropped.

As previously mentioned, the objective of the noninterference theory~\cite{GM82}
is to study the effect of indirect information flows causing the disclosure of secret information. The flows of interest are those that can be formalized through formulas like:
\[S_{a,b}\psi \land K_b \varphi \land K_b (\varphi \to \psi) \tag{IIF}\label{indirectIF}
\]
from which it is easy to see that $K_b\psi$ immediately derives.

The sub-formula $S_{a,b}\psi$ expresses, in a way, the intentions and expectations of agent $a$ about the secrecy of $\psi$ for agent $b$. Hence, $\psi$ is the secretum that agent $a$ does not reveal directly to agent $b$.
The sub-formula $K_b (\varphi \to \psi)$ expresses a relation (between $\varphi$ and $\psi$) known to agent $b$ and triggering, in combination
with the sub-formula $K_b \varphi$, the indirect information flow that allows agent $b$ to infer  $\psi$ through the knowledge of $\varphi$. 
In other words, the knowledge of $(\varphi \to \psi)$ is at the base of the interference that, given $K_b \varphi$, provides knowledge of the secret.
Formally, it is worth observing that formula~(\ref{indirectIF}) is satisfiable in $\mathsf{S}$. However, the same does not hold for the Xiong-{\AA}gotnes' operator $S$, at least if the factivity of knowledge is assumed. This highlights that, whenever `` true secrets'' are taken into account, the notion of ``having a secret'' portrayed by $S$ results in an actual impossibility for outsiders to infer the content of the secret, namely, secret keepers have full control over the secrets they have. On the contrary, intentions of secrecy do not exclude possible information leakages due to the inferential behavior of nescients. This is a fundamental point for information flow analysis and the modeling of noninterference\footnote{This is in contrast with item 1 of Table \ref{tab:agot}, which is the very definition of secret given in \cite{Xiong2023}.}.

The following line of pseudo-code represents a classic example of \eqref{indirectIF}: 
\[
\mathtt{if~secret\_boolean} == 0: \mathtt{print(``Hello, World!")} 
\]
where the value of $\mathtt{secret\_boolean}$ is the secret $\psi$ that cannot be directly accessed by agent $b$, who can, however, observes the standard output. Hence, by observing the output $\mathtt{``Hello, World!"}$ ($K_b \varphi$) and by knowing the program ($K_b(\varphi \to \psi$)), agent $b$ infers information about the secret  without reading it directly. 

Revealing to agent $a$ the nature of such an information flow known by agent $b$ is preparatory to finding a solution to the information leakage. The following result expresses such an idea.

\begin{proposition}\label{prop:rev1}
Let $\varphi,\psi \in\mathrm{Fm}_{\mathsf{S}}$ and $a,b\in Ag$ with $a\ne b$. \\ Then, $\vds K_{a}(K_{b}\varphi\land K_{b}(\varphi\to\psi))\to K_a (\neg\sab\psi).$
\end{proposition}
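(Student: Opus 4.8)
The strategy is a purely syntactic derivation from the axioms of $\mathsf{S}$, whose crux is a controlled use of positive introspection for $K_a$ in combination with $A5$ and $A6$. By Theorem~\ref{compl-sound} we could equally well argue semantically, and I sketch that route too at the end.

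First I would isolate the obvious auxiliary theorem $\vds (K_b\varphi \wedge K_b(\varphi\to\psi)) \to K_b\psi$, which is immediate from $A2$ instantiated at $\star = K_b$ (distribution of $K_b$ over $\to$) together with propositional logic. Since $\vds \chi_1\to\chi_2$ entails $\vds K_a\chi_1\to K_a\chi_2$ (apply $\mathrm{RN}_{K_a}$, $A2$ for $K_a$, and $\mathrm{RMP}$), this lifts to
\[
\vds K_{a}\bigl(K_{b}\varphi\land K_{b}(\varphi\to\psi)\bigr)\to K_{a}K_{b}\psi .
\]
Hence it suffices to establish $\vds K_{a}K_{b}\psi\to K_a(\neg\sab\psi)$.

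Second, I would derive $\vds K_{a}K_{b}\psi \to \neg\sab\psi$. From $A6$ (with $\varphi:=K_b\psi$) we get $K_{a}K_{b}\psi\to B_{a}K_{b}\psi$, and from $A5$ (again with $\varphi:=K_b\psi$) we get $B_{a}K_{b}\psi\to\neg B_{a}\neg K_{b}\psi$. Since $B_{a}\neg K_{b}\psi$ is literally one of the three conjuncts of $\sab\psi$, propositional reasoning gives $\neg B_{a}\neg K_{b}\psi\to\neg\sab\psi$, and chaining the three implications yields $\vds K_{a}K_{b}\psi\to\neg\sab\psi$. To push the outer $K_a$ through, I would apply $\mathrm{RN}_{K_a}$ to this theorem and use $A2$ for $K_a$, obtaining $\vds K_{a}K_{a}K_{b}\psi\to K_{a}(\neg\sab\psi)$; then $A4$ (positive introspection, $K_{a}K_{b}\psi\to K_{a}K_{a}K_{b}\psi$) and transitivity of $\to$ give $\vds K_{a}K_{b}\psi\to K_{a}(\neg\sab\psi)$. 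Composing with the displayed implication from the first step closes the argument. Semantically, the same point reads: given $\mathcal{M},i\models K_{a}(K_{b}\varphi\land K_{b}(\varphi\to\psi))$ and any $j$ with $R^{K}_{a}(i,j)$, transitivity of $R^{K}_{a}$ forces $\mathcal{M},j\models K_{a}K_{b}\psi$; then $R^{B}_{a}\subseteq R^{K}_{a}$ makes every $R^{B}_{a}$-successor of $j$ satisfy $K_{b}\psi$, which contradicts $\mathcal{M},j\models B_{a}\neg K_{b}\psi$ by seriality of $R^{B}_{a}$, so $\mathcal{M},j\models\neg\sab\psi$.

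\textbf{Main obstacle.} The only genuine subtlety is that one \emph{cannot} short-cut the proof by deriving $\neg\sab\psi$ from $K_b\psi$ at a single world: since $R^{B}_{a}$ need not be reflexive, $B_{a}\neg K_{b}\psi$ and $K_{b}\psi$ are perfectly consistent locally (indeed $b$ may know $\psi$ while $a$ believes otherwise — cf. Proposition~\ref{prop:notperfsecret}). The extra layer of $K_a$, exploited through $A4$ (or, on the semantic side, through transitivity of $R^{K}_{a}$ and $R^{B}_{a}\subseteq R^{K}_{a}$), is exactly what forces $K_b\psi$ into all of $a$'s belief-alternatives so that it can clash with $A5$.
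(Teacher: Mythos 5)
Your proof is correct. The paper states Proposition~\ref{prop:rev1} without proof (it is among those left to the reader), and your derivation — reduce to $\vds K_aK_b\psi\to K_a(\neg\sab\psi)$ via the K-axiom for $K_b$, then chain A6 and A5 against the conjunct $B_a\neg K_b\psi$ of $\sab\psi$ and push the outer $K_a$ through with $\mathrm{RN}_{K_a}$, A2 and A4 — is exactly the intended argument; your remark that the outer $K_a$ (equivalently, transitivity of $R^K_a$ plus $R^B_a\subseteq R^K_a$) is indispensable because $K_b\psi$ and $B_a\neg K_b\psi$ are locally consistent (cf.\ Proposition~\ref{prop:notperfsecret}) correctly identifies the only subtle point.
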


Intuitively, if agent $a$ is aware that agent $b$ knows the indirect information flow, then she realizes that the secrecy of $\psi$ is violated.
We stress that what matters most is not agent $a$'s knowledge of the information flow but agent $a$'s knowledge/belief about agent $b$'s knowledge. Indeed, it can be seen that the same results hold for the Xiong-{\AA}gotnes operator $S$, once the S4 framework is considered. Moreover, the dependence of secrecy's intentions from actual secret keepers' beliefs about nescients knowledge is also emphasized by the following negative result, whose validity is also preserved in \cite{Xiong2023}.

\begin{proposition}\label{prop:rev2}
Let $\varphi,\psi \in\mathrm{Fm}_{\mathsf{S}}$ and $a,b\in Ag$ with $a\ne b$. \\ Then, $\not\vds\sab\psi \land K_{a}(\varphi\to\psi) \,\land\, K_a \varphi \to \sab\varphi$.
\end{proposition}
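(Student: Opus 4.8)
The statement to prove is a non-derivability result: $\not\vds\sab\psi \land K_{a}(\varphi\to\psi) \land K_a \varphi \to \sab\varphi$, where $a \ne b$. By the soundness direction of Theorem~\ref{compl-sound}, it suffices to exhibit a single $\mathcal{S}$-model $\mathcal{M}$ and a world $i$ at which the antecedent holds but the consequent fails.

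The plan is to build a small model in which $\varphi$ is genuinely concealed by $a$ from $b$ — so $\sab\varphi$ is \emph{not} a candidate to fail through its $K_a\varphi$ conjunct — but rather fails through the belief conjunct $B_a\neg K_b\varphi$. The key idea: arrange that from $a$'s doxastic perspective, $b$ might already know $\varphi$ (even though $b$ does not know it in the actual world, or does — factivity is irrelevant here since $K_a\varphi$ is the one being conjoined into the antecedent via $\sab\psi$, not $\sab\varphi$). Concretely I would pick two propositional variables $p$ (for $\varphi$) and $q$ (for $\psi$) with $v(p)\subseteq v(q)$ globally, so that $\varphi\to\psi$ is valid in the model and hence $K_a(\varphi\to\psi)$, $K_b(\varphi\to\psi)$ hold everywhere for free; also make $p$ true at $i$ so that $K_a\varphi$ can hold. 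Then design the relations $R^K_a, R^B_a, R^I_a$ so that at $i$: $K_a q$ holds (all $R^K_a$-successors satisfy $q$), $B_a\neg K_b q$ holds, $I_a(q\land\neg K_b q)$ holds — giving $\sab\psi$ — while $B_a\neg K_b p$ fails, because some $R^B_a$-successor of $i$ is a world where $b$ knows $p$. This breaks $\sab\varphi$ at $i$ without disturbing $\sab\psi$, provided $\psi$ is still hidden at that problematic world.

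The main obstacle — and the step requiring genuine care — is verifying the frame conditions of Definition~\ref{def:s-mod}: $R^B_a, R^I_a$ serial; $R^I_a, R^K_a$ transitive; $R^K_a$ reflexive; the two inclusions $R^B_a\subseteq R^K_a$; and the three composition/absorption conditions (items 5, 7, 8 of Definition~\ref{def:s-mod}) tying $R^K_a$ before $R^I_a$, $R^K_a$ before $R^B_a$, and $R^I_a$ after $R^K_a$. The cleanest way to satisfy all of these simultaneously is the device already used in Propositions~\ref{prop:nonmonotonic} and \ref{prop:notperfsecret}: take $R^K_a$ to be $\Delta_W$ together with a few extra pairs forming a transitive reflexive preorder, and set $R^B_a = R^I_a$ equal to $R^K_a$ with some reflexive loops removed (keeping seriality by ensuring every world still has an outgoing edge). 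One must also handle agent $b$'s relations: since only $K_b$ appears, one can set $R^K_b$, $R^B_b$, $R^I_b$ to be $\Delta_W$ plus whatever is needed to make $b$ know $p$ at the designated $R^B_a$-successor of $i$ while $b$ does \emph{not} know $q$ there (so $\psi$ remains secret at that world as seen through $I_a$ and $B_a$ from $i$). I would choose a world set like $W=\{i,j,w,u,k\}$, put the "$b$ knows $p$" witness as a reachable state from $i$ via $R^B_a$, route an extra $R^K_b$-edge into a $\neg q$ world from there to kill $K_b q$, and then mechanically check every clause of Definition~\ref{def:s-mod} plus the satisfaction of the antecedent conjuncts and the failure of $B_a\neg K_b\varphi$ at $i$. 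The verification is routine but must be done edge-by-edge; the composition conditions (5), (7), (8) are the ones most easily violated by a careless choice of the extra pairs, so they deserve explicit checking.
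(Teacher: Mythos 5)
There is a genuine gap, and it lies in the one concrete design decision you committed to: choosing the valuation with $v(p)\subseteq v(q)$ so that $\varphi\to\psi$ is \emph{globally} valid. That choice is incompatible with the rest of your own plan. You need a world $w$ that is an $R^{B}_{a}$-successor of $i$ at which $K_{b}p$ holds (to kill $B_{a}\neg K_{b}p$ and hence $\sab\varphi$) while $\neg K_{b}q$ still holds (so that $B_{a}\neg K_{b}q$, and hence $\sab\psi$, survives). But $K_{b}p\land\neg K_{b}q$ at $w$ forces some $R^{K}_{b}$-successor of $w$ satisfying $p\land\neg q$, i.e.\ a world where $\varphi\to\psi$ fails. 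With $v(p)\subseteq v(q)$ no such world exists: $K_{b}p$ would immediately yield $K_{b}q$ at $w$, destroying $\sab\psi$ at $i$. So the "provided $\psi$ is still hidden at that problematic world" caveat you flag can never be discharged under your valuation, and no choice of relations can rescue it.

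The fix is to secure $K_{a}(\varphi\to\psi)$ \emph{locally} rather than by global validity, and this is exactly what the paper's countermodel does: on $W=\{i,j,w,r\}$ it takes $v(p)=W$ (so $K_{b}p$ holds everywhere for trivial reasons, which is the cheapest way to break $B_{a}\neg K_{b}p$, given that $R^{B}_{a}$ is serial) and $v(q)=\{i,j,w\}$, with $R^{K}_{a}=\Delta_{W}\cup\{(i,j),(j,w),(i,w)\}$, $R^{B}_{a}=R^{I}_{a}=\{(i,w),(j,w),(w,w),(r,r)\}$, and $R^{K}_{b}=R^{B}_{b}=R^{I}_{b}=\Delta_{W}\cup\{(w,r)\}$. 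Here $p\to q$ fails at $r$, but $r$ is not $R^{K}_{a}$-accessible from $i$, so $K_{a}(p\to q)$ still holds at $i$; meanwhile $r$ \emph{is} $R^{K}_{b}$-accessible from $w$, which is what keeps $q$ hidden from $b$ at the unique $R^{B}_{a}$-/$R^{I}_{a}$-successor $w$ and preserves $\sab q$. Your remaining machinery (soundness reduction, breaking the belief conjunct, the edge-by-edge verification of the frame conditions of Definition~\ref{def:s-mod}) is the right approach and matches the paper; only the valuation needs to be turned around as above.
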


\begin{proof}
    Let us consider the structure $$\mathcal{M}=(\{i,j,w,r\},\{R^{I}_{a}\}_{a\in Ag},\{R^{K}_{a}\}_{a\in Ag},\{R^{B}_{a}\}_{a\in Ag},v)$$ such that, for a fixed $a\in Ag$, one has
    \begin{itemize}
        \item $R^{K}_{a}:=\Delta_{W}\cup\{(i,j),(j,w),(i,w)\}$;
        \item $R^{B}_{a}=R^{I}_{a}:=\{(i,w),(j,w),(w,w),(r,r)\},$
        \end{itemize}
and for any $b\ne a$
\begin{itemize}
        \item $R^{K}_{b}=R^{B}_{b}=R^{I}_{b}=\Delta_{W}\cup\{(w,r)\}$.
    \end{itemize}
Moreover, let $v$ be such that for fixed $p,q\in \mathit{Var}$, $v(p)=W$ and $v(q)=\{i,j,w\}$. A customary check shows that $\mathcal{M}$ is indeed an $\mathcal{S}$-model. Moreover, upon extending $v$ to an evaluation $v$ over $\mathrm{Fm}_{\mathsf{S}}$, one has that $\mathcal{M},i\models(\sab q)\land K_{a}(p\to q)$ but $\mathcal{M},i\not\models\sab p$.   \qed
\end{proof}

Intuitively, even if agent $a$ is aware of an information flow from $\varphi$ to $\psi$ and $\psi$ is a secret protected by agent $a$, not necessarily agent $a$ intends to protect also $\varphi$ from agent $b$.
The reason could be that agent $a$ has good reasons to believe that agent $b$ does not know the information flow from $\varphi$ to $\psi$.
This kind of reasoning is quite common in strategic risk assessment.
For instance, a chief information security officer (CISO) $a$ may be aware of a vulnerability in the corporate network ($\varphi$), which could lead to a data breach ($\psi$). However, even if the information loss is undesired ($S_{a,b}(\psi)$), CISO $a$ may decide not to protect the network, e.g., because it is supposed that hackers do not know how to exploit it. Under these considerations, it is interesting to formalize the intentions of the CISO in the unfortunate case in which it is believed that the hackers may exploit the vulnerability. The following result provides valuable insights. 

\begin{proposition}\label{prop:cisoex}
Let $\varphi,\psi \in\mathrm{Fm}_{\mathsf{S}}$ and $a,b\in Ag$ with $a\ne b$. Then:
$$\vds\sab\psi\land B_{a}K_{b}(\varphi\to\psi)\land K_{a}\varphi\to[\neg \sab\varphi \to (\neg I_{a}\varphi \lor \neg I_{a}K_{b}(\varphi\to\psi))].$$
\end{proposition}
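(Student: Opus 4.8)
The plan is to work by contraposition on the \emph{inner} implication. By classical propositional reasoning the statement is equivalent to
\[
\vds\sab\psi\land B_{a}K_{b}(\varphi\to\psi)\land K_{a}\varphi\land I_{a}\varphi\land I_{a}K_{b}(\varphi\to\psi)\to\sab\varphi,
\]
since from $\sab\varphi$ and the hypothesis $\neg\sab\varphi$ we immediately get $\neg I_{a}\varphi\lor\neg I_{a}K_{b}(\varphi\to\psi)$. So the real task is: assuming the five conjuncts on the left, reconstruct the three conjuncts $K_{a}\varphi$, $B_{a}\neg K_{b}\varphi$, $I_{a}(\varphi\land\neg K_{b}\varphi)$ of $\sab\varphi$. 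This can be done either syntactically or, in the style of the other proofs in this section, by fixing an arbitrary $\mathcal{S}$-model $\mathcal{M}$ and world $i$ satisfying the antecedent.

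The engine of the argument is the elementary validity $\vds K_{b}\varphi\land K_{b}(\varphi\to\psi)\to K_{b}\psi$, obtained from the tautology $(\varphi\land(\varphi\to\psi))\to\psi$ via $\mathrm{RN}_{K_{b}}$, (A2), (RMP) together with distributivity of $K_{b}$ over $\land$; its contrapositive is $\vds\neg K_{b}\psi\land K_{b}(\varphi\to\psi)\to\neg K_{b}\varphi$. Applying $\mathrm{RN}_{I_{a}}$, (A2), (RMP) and distributivity of $I_{a}$ over $\land$ turns this into $\vds I_{a}\neg K_{b}\psi\land I_{a}K_{b}(\varphi\to\psi)\to I_{a}\neg K_{b}\varphi$, and running the same recipe with $B_{a}$ in place of $I_{a}$ gives $\vds B_{a}\neg K_{b}\psi\land B_{a}K_{b}(\varphi\to\psi)\to B_{a}\neg K_{b}\varphi$.

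With these two ``transfer'' lemmas the three conjuncts of $\sab\varphi$ follow directly. First, $K_{a}\varphi$ is assumed outright. Second, unpacking $\sab\psi$ yields $B_{a}\neg K_{b}\psi$, so the $B_{a}$-transfer lemma together with the hypothesis $B_{a}K_{b}(\varphi\to\psi)$ delivers $B_{a}\neg K_{b}\varphi$. Third, $\sab\psi$ also yields $I_{a}(\psi\land\neg K_{b}\psi)$, hence $I_{a}\neg K_{b}\psi$ by distributivity of $I_{a}$ over $\land$; the $I_{a}$-transfer lemma plus $I_{a}K_{b}(\varphi\to\psi)$ then gives $I_{a}\neg K_{b}\varphi$, and combining this with the assumed $I_{a}\varphi$ via distributivity of $I_{a}$ over $\land$ yields $I_{a}(\varphi\land\neg K_{b}\varphi)$. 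Thus $\sab\varphi$ holds, contradicting $\neg\sab\varphi$, which closes the contrapositive and proves the claim.

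I do not expect a genuine obstacle here. The only point needing a little care is that the intentional conjunct of $\sab\varphi$ is the single boxed conjunction $I_{a}(\varphi\land\neg K_{b}\varphi)$, so one must derive $I_{a}\varphi$ and $I_{a}\neg K_{b}\varphi$ separately and then recombine them using the conjunction-introduction direction of distributivity (which every normal modality enjoys, needing nothing beyond (A2) and necessitation). It is worth remarking that, unlike several earlier proofs, no form of introspection (A4, A7, A9--A11) and no factivity of $K$ is invoked: the result rests purely on the $\mathsf{K}$-fragment of the modalities $K_{b}$, $B_{a}$, $I_{a}$ and the definition of $\sab$.
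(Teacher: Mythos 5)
Your proof is correct and follows essentially the same route as the paper's: both contrapose the inner implication and use the two ``transfer'' facts $B_{a}\neg K_{b}\psi\land B_{a}K_{b}(\varphi\to\psi)\to B_{a}\neg K_{b}\varphi$ and $I_{a}\neg K_{b}\psi\land I_{a}K_{b}(\varphi\to\psi)\to I_{a}\neg K_{b}\varphi$ to reassemble $\sab\varphi$ and reach a contradiction. Your closing remark that only the normal ($\mathsf{K}$-fragment) behaviour of the modalities is needed is accurate and is a nice observation the paper leaves implicit.
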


\begin{proof}
Let us consider the contrapositive of the consequent of the stated formula: $I_{a}\varphi \land I_{a}K_{b}(\varphi\to\psi)\to \sab\varphi.$
Let $\mathcal{M}$ be an arbitrary $\mathcal{S}$-model and let $i\in W$. Assume that $\mathcal{M},i\models \sab\psi\land B_{a}K_{b}(\varphi\to\psi)\land K_{a}\varphi$ but $\mathcal{M},i\models I_{a}\varphi\land I_{a}K_{b}(\varphi\to\psi)\land\neg\sab\varphi.$ Note that $\mathcal{M},i\models B_{a}K_{b}(\varphi\to\psi)$ and $\mathcal{M},i\models B_{a}\neg K_{b}\psi$ entails $\mathcal{M},i\models B_{a}\neg K_{b}\varphi$. Furthermore, $\mathcal{M},i\models I_{a}K_{b}(\varphi\to\psi)$ implies $\mathcal{M},i\models I_{a}\neg K_{b}\psi\to I_{a}\neg K_{b}\varphi$. In view of our assumptions, we conclude that $\mathcal{M},i\models I_{a}\neg K_{b}\varphi$. This boils down to a contradiction: $\mathcal{M},i\models\sab\varphi$. \qed
\end{proof}

Going back to our example, CISO $a$ has to protect sensitive data ($S_{a,b}(\psi)$), is aware of the vulnerability ($K_{a}\varphi$), and believes that the hackers know how to exploit the vulnerability
($B_{a}K_{b}(\varphi\to\psi)$). Then, if the CISO aims to avoid hiding the vulnerability from the hackers ($\neg S_{a,b}(\varphi)$), two alternative solutions are possible:
\begin{enumerate}
    \item the CISO invests to resolve the vulnerability ($\neg I_{a}\varphi$);
    \item the CISO invests to evade or mitigate attacks that can exploit the vulnerability ($\neg I_{a}K_{b}(\varphi\to\psi)$).
\end{enumerate}

Proposition~\ref{prop:cisoex} and the CISO example emphasize the kind of complex real-world scenarios about secrecy intentions that our operator is able to describe.

Until now, we have dealt with situations involving no more than two agents: a secret keeper and an alleged nescient. The following proposition provides some results concerning situations involving three agents and the transfer of secrets among them. 

\begin{proposition}\label{prop:threeagents}Let $\varphi \in\mathrm{Fm}_{\mathsf{S}}$ and $a,b,c\in Ag$ any triple of mutually distinct agents. The following hold:
\begin{enumerate}
\item $\vds K_{c}\sab\varphi\to K_{c}\varphi$;
\item $\vds\sab\varphi\to I_{a}\neg K_{b}S_{c,b}\varphi$;
\item $\vds\sab\varphi\to [B_{a}(K_{c}\varphi\to K_{b}\varphi)\land(I_{a}(K_{c}\varphi\to K_{b}\varphi))\leftrightarrow S_{a,c}\varphi]$;
\item $\vds\sab\varphi \land  I_{a}K_{c}\varphi\to I_{a}\neg(K_{c}\varphi\to K_{b}\varphi)$;
\item $\vds\sab\varphi\land K_{a}S_{c,b}\varphi\to( I_{a}S_{c,b}\varphi\leftrightarrow\sab (S_{c,b}\varphi))$;
\item $\not\vds\sab\varphi\land K_{a}S_{c,b}\varphi\to\sab (S_{c,b}\varphi)$;
\item $\not\vds\sab(S_{c,b}\varphi)\to\sab\varphi$;
\end{enumerate}    
\end{proposition}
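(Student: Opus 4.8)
Proposition \ref{prop:threeagents} collects seven assertions, so the plan is to handle them in groups according to the proof technique they require: the positive derivability claims (1)--(5) by semantic arguments over arbitrary $\mathcal{S}$-models together with the axioms listed in Table \ref{tab:assiomi} and the properties of $S_{a,b}$ already established in Propositions \ref{prop:propertiessab}, \ref{prop:propertiessab2}, and \ref{prop:implications}; and the two negative claims (6)--(7) by exhibiting explicit finite counter-models, mirroring the technique used throughout Section \ref{sec:relation between}.

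For (1): fix an arbitrary $\mathcal{S}$-model $\mathcal{M}$ and $i\in W$; if $\mathcal{M},i\models K_c\sab\varphi$ then by reflexivity of $R^{K}_c$ we get $\mathcal{M},i\models\sab\varphi$, hence $\mathcal{M},i\models K_a\varphi$ by definition, and $\mathcal{M},i\models\varphi$ by (A3); more carefully, $\mathcal{M},i\models K_c\sab\varphi$ gives, via $\vds\sab\varphi\to K_a\varphi\to\varphi$, $\vds K_c\sab\varphi\to K_c\varphi$ by $\mathrm{RN}_{K_c}$, (A2), and (RMP). For (2): from $\mathcal{M},i\models\sab\varphi$ we have $\mathcal{M},i\models I_a\neg K_b\varphi$; since $\vds S_{c,b}\varphi\to\varphi$ and, more to the point, $\vds K_b S_{c,b}\varphi\to K_b\varphi$ (using $\vds S_{c,b}\varphi\to K_c\varphi$, then $\vds K_b K_c\varphi\to K_b\varphi$ as in the proof of Proposition \ref{prop:propertiessab}(5) and (8)), we obtain $\vds\neg K_b\varphi\to\neg K_b S_{c,b}\varphi$, hence $\vds I_a\neg K_b\varphi\to I_a\neg K_b S_{c,b}\varphi$ by $\mathrm{RN}_{I_a}$, (A2), and (RMP), which yields the claim. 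For (3): the left-to-right direction of the biconditional under the hypothesis $\sab\varphi$ unpacks $\sab\varphi$ into $K_a\varphi\wedge B_a\neg K_b\varphi\wedge I_a(\varphi\wedge\neg K_b\varphi)$; combining $B_a\neg K_b\varphi$ with $B_a(K_c\varphi\to K_b\varphi)$ gives $B_a\neg K_c\varphi$ (by contraposition inside $B_a$, using (A2)), and combining $I_a\neg K_b\varphi$ with $I_a(K_c\varphi\to K_b\varphi)$ gives $I_a\neg K_c\varphi$, which together with $K_a\varphi$ and $I_a\varphi$ (and $I_aK_a\varphi$ via (A10), so one can reconstruct $I_a(\varphi\wedge\neg K_c\varphi)$ by distributivity of $I_a$ over $\wedge$) delivers $S_{a,c}\varphi$; conversely, $S_{a,c}\varphi$ gives $B_a\neg K_c\varphi$ and $I_a\neg K_c\varphi$, and the implications $\neg K_c\varphi\to(K_c\varphi\to K_b\varphi)$ are tautologies, so $B_a(K_c\varphi\to K_b\varphi)$ and $I_a(K_c\varphi\to K_b\varphi)$ follow. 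For (4): from $\sab\varphi$ we have $I_a\neg K_b\varphi$; together with $I_aK_c\varphi$ and distributivity of $I_a$ over $\wedge$ this yields $I_a(K_c\varphi\wedge\neg K_b\varphi)$, and since $\vds(K_c\varphi\wedge\neg K_b\varphi)\to\neg(K_c\varphi\to K_b\varphi)$ we conclude $I_a\neg(K_c\varphi\to K_b\varphi)$ by $\mathrm{RN}_{I_a}$, (A2), (RMP). For (5): this is the exact analogue of Proposition \ref{prop:propertiessab2}(1) combined with Proposition \ref{prop:propertiessab}(5), treating $S_{c,b}\varphi$ in place of the secretum --- under the hypothesis $\sab\varphi\wedge K_a S_{c,b}\varphi$ the forward direction follows because $I_a S_{c,b}\varphi$, together with the conjuncts $K_a S_{c,b}\varphi$ and $B_a\neg K_b S_{c,b}\varphi$ (the latter obtained from $B_a\neg K_b\varphi$ of $\sab\varphi$ as in (2)) and $I_a\neg K_b S_{c,b}\varphi$ (from (2) plus (A9)-style reasoning), reconstitutes $\sab(S_{c,b}\varphi)$; the reverse direction is Proposition \ref{prop:propertiessab}(3)-type plus the definition.

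The negative claims (6) and (7) are where the real work lies: I need to build $\mathcal{S}$-models satisfying all eight frame conditions of Definition \ref{def:s-mod}, which is the recurrent obstacle in this section --- verifying conditions 5, 7, and 8 (the interaction conditions between $R^K_a$, $R^B_a$, $R^I_a$) is delicate because the relations for the three agents $a,b,c$ must all be specified and must interact correctly. For (6), I want a model where $\sab\varphi$ and $K_a S_{c,b}\varphi$ hold at a world $i$ but $\sab(S_{c,b}\varphi)$ fails; the natural way to break $\sab(S_{c,b}\varphi)$ while keeping $K_a S_{c,b}\varphi$ is to violate $I_a\neg K_b S_{c,b}\varphi$ or $I_a S_{c,b}\varphi$ --- i.e., make $a$ know $S_{c,b}\varphi$ but not intend it, which is consistent since $I$ is independent of $K$ (Proposition \ref{ex:notrivializ}). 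For (7), I want $\sab(S_{c,b}\varphi)$ to hold while $\sab\varphi$ fails, which can be arranged by having $a$ believe (and intend) that $b$ doesn't know $S_{c,b}\varphi$ but not that $b$ doesn't know $\varphi$ --- plausible since, as noted after Proposition \ref{prop:nonmonotonic}, $B_a\neg K_b(\cdot)$ is not monotone. In both cases I expect to need roughly four to six worlds with carefully chosen $R^K_a, R^B_a, R^I_a$ for $a$ and for the generic $b$ (and where $c$ matters, for $c$), then extend a two-variable valuation and mechanically check the truth of the relevant subformulas at the designated world. The main obstacle throughout is bookkeeping: ensuring seriality of $R^B_a,R^I_a$, reflexivity–transitivity of $R^K_a$, the inclusion $R^B_a\subseteq R^K_a$, and the three composition conditions all simultaneously, for every agent, in each counter-model --- a routine but error-prone verification which I would present compactly as ``a routine check shows that $\mathcal{M}$ is an $\mathcal{S}$-model,'' as is done for the sibling propositions above.
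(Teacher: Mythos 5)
Items (1)--(5) of your proposal are correct and follow essentially the same lines as the paper: (1) is the normality of $K_c$ applied to $\vds\sab\varphi\to\varphi$; (3) and (4) are the same contraposition-inside-$B_a$/$I_a$ arguments the paper gives semantically; (5) assembles the conjuncts of $\sab(S_{c,b}\varphi)$ exactly as the paper does, reusing item (2) for $I_a\neg K_b S_{c,b}\varphi$ and a $B_a$-monotonicity argument for $B_a\neg K_b S_{c,b}\varphi$. Your item (2) actually takes a cleaner route than the paper: you derive $\vds K_b S_{c,b}\varphi\to K_b\varphi$ from $\vds S_{c,b}\varphi\to\varphi$, contrapose, and push the implication through $I_a$ by $\mathrm{RN}_{I_a}$ and (A2); the paper instead argues via (A8) by refuting $I_a K_b S_{c,b}\varphi$, which only yields $\neg I_a K_b S_{c,b}\varphi$ rather than the stated $I_a\neg K_b S_{c,b}\varphi$, so your direct monotonicity argument is the one that actually closes the claim.

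The genuine gap is in (6) and (7): a non-derivability claim in this framework is proved only by exhibiting a concrete $\mathcal{S}$-model, and you stop at the level of a construction plan (``I expect to need roughly four to six worlds\dots''). Nothing in your text certifies that a frame satisfying all eight conditions of Definition~\ref{def:s-mod} and realizing your intended truth-value pattern actually exists, and this is precisely where the work lies --- the paper spends six worlds on (6) and four on (7), with the interaction conditions (5), (7), (8) constraining which edges may be added. Your target for (7) (make $I_a\neg K_b\varphi$ fail while $\sab(S_{c,b}\varphi)$ holds) is the right one and matches the paper's counter-model. For (6), however, one of your two proposed targets is unavailable: by your own item (2), $\sab\varphi$ already entails $I_a\neg K_b S_{c,b}\varphi$, so that conjunct cannot be falsified under the hypotheses; by item (5) the only conjunct that can fail is $I_a S_{c,b}\varphi$, and the paper falsifies it by arranging $\neg I_a K_c\varphi$ (an $I_a$-successor of the designated world with a $K_c$-successor refuting the secretum), whence $\neg I_a S_{c,b}\varphi$ since $\vds S_{c,b}\varphi\to K_c\varphi$. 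Until the two models are written down and checked, (6) and (7) remain unproved.
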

\begin{proof} Let $\mathcal{M}$ be an arbitrary $\mathcal{S}$-model and $i \in W$.

\begin{enumerate}
\item Direct consequence of Proposition~\ref{prop:propertiessab}(3).
\item By (A8), it is sufficient to show that $\mathcal{M},i\models \sab\varphi \land I_{a} K_{b}S_{c,b}\varphi$ is a contradiction. From $\mathcal{M},i\models \sab\varphi$ we derive $\mathcal{M},i\models I_{a} \neg K_{b} \varphi$ and from
$\mathcal{M},i\models I_{a} K_{b}S_{c,b}\varphi$ we derive 
$\mathcal{M},i\models I_{a} K_{b} \varphi$, thus the contradiction.

\item Suppose that $\mathcal{M},i\models\sab\varphi\land K_{a}S_{c,b}\varphi\land I_{a}S_{c,b}\varphi$. We just need to prove that $\mathcal{M},i\models B_{a}\neg K_{b}S_{c,b}\varphi$ and $\mathcal{M},i\models I_{a}\neg K_{b}S_{c,b}\varphi$. Since the latter follows from (2), we focus on the former. Suppose towards a contradiction that $\mathcal{M},i\not\models B_{a}\neg K_{b}S_{c,b}\varphi$. Therefore, there is $j\in W$ such that $R^{B}_{a}(i,j)$ and $\mathcal{M},j\models K_{b}S_{c,b}\varphi$. The latter condition entails $\mathcal{M},j\models K_{b}\varphi$. However, this means that $\mathcal{M},i\not\models B_{a}\neg K_{b}\varphi$, a contradiction. So we have, $\mathcal{M},i\models\sab\varphi\land K_{a}S_{c,b}\varphi\to(I_{a}S_{c,b}\varphi\to \sab(S_{c,b}\varphi)$. Therefore, since $\vds \sab(S_{c,b}\varphi)\to I_{a}S_{c,b}\varphi$, our conclusion obtains.

\item Assume that $\mathcal{M},i\models\sab\varphi$. If $\mathcal{M},i\models B_{a}(K_{c}\varphi\to K_{b}\varphi)\land I_{a}(K_{c}\varphi\to K_{b}\varphi)$, then one has $\mathcal{M},i\models B_{a}\neg K_{b}\varphi\to B_{a}\neg K_{c}\varphi$ as well as $\mathcal{M},i\models I_{a}\neg K_{b}\varphi\to I_{a}\neg K_{c}\varphi$.  Consequently, since $\mathcal{M},i\models\sab\varphi$, we have also $\mathcal{M},i\models B_{a}\neg K_{c}\varphi$ as well as $\mathcal{M},i\models I_{a}\neg K_{c}\varphi$. So, $\mathcal{M},i\models S_{a,c}\varphi$. Conversely, if $\mathcal{M},i\models S_{a,c}\varphi$, then $\mathcal{M},i\models I_{a}\neg K_{c}\varphi$. However, since $\mathcal{M},i\models I_{a}\neg K_{c}\varphi\to I_{a}(\neg K_{b}\varphi\to \neg K_{c}\varphi)$, we have also $\mathcal{M},i\models I_{a}(\neg K_{b}\varphi\to \neg K_{c}\varphi)$. Therefore, we have $\mathcal{M},i\models I_{a}(K_{c}\varphi\to K_{b}\varphi)$. Similarly, we prove that $\mathcal{M},i\models B_{a}(K_{c}\varphi\to K_{b}\varphi)$. 

\item Assume $\mathcal{M},i\models \sab\varphi \land I_{a}K_{c}\varphi$. If $\mathcal{M},i\not\models I_{a}\neg (K_{c}\varphi\to K_{b}\varphi)$, one has that there is $j\in W$ such that $R^{I}_{a}(i,j)$ and $\mathcal{M},j\models K_{c}\varphi\to K_{b}\varphi$. Therefore, $\mathcal{M},j\models K_{b}\varphi$ and so $\mathcal{M},i\not\models I_{a}\neg K_{b}\varphi$, a contradiction.

\item Let us consider the structure $$\mathcal{M}=(\{i,j,w,r,u,v\},\{R^{I}_{a}\}_{a\in Ag},\{R^{K}_{a}\}_{a\in Ag},\{R^{B}_{a}\}_{a\in Ag},v)$$ such that, for fixed $a,c\in Ag$ ($a\ne c$), one has
    \begin{itemize}
        \item $R^{K}_{a}:=\Delta_{W}\cup\{(i,j),(j,w),(i,w)\}$;
        \item $R^{B}_{a}=R^{I}_{c}=R^{B}_{c}:=\{(i,w),(j,w),(w,w),(r,r)\}$;
        \item $R^{I}_{a}:=R^{B}_{a}\cup\{(i,u)\}$;
        \item $R^{K}_{c}=R^{K}_{a}\cup\{(u,v)\}$;
        \end{itemize}
and for any $b\ne a,c$
\begin{itemize}
        \item $R^{K}_{b}=R^{B}_{b}=R^{I}_{b}=\Delta_{W}\cup\{(w,r)\}$.
    \end{itemize}
Moreover, let $v$ be such that for a fixed $q\in Var$, $v(q)=\{i,j,w,u\}$. A customary check shows that $\mathcal{M}$ is indeed an $\mathcal{S}$-model. Moreover, upon extending $v$ to an evaluation $v$ over $\mathrm{Fm}_{\mathsf{S}}$, one has that $\mathcal{M},i\models\sab q\land K_{a}S_{c,b}q$ but $\mathcal{M},i\not\models\neg I_{a}K_{c}q$, an so $\mathcal{M},i\not\models\sab(S_{c,b}q)$ as well.

\item Let us consider the structure  $$\mathcal{M}=(\{i,j,w,u\},\{R^{I}_{a}\}_{a\in Ag},\{R^{K}_{a}\}_{a\in Ag},\{R^{B}_{a}\}_{a\in Ag},v)$$
such that, for some $a,c\in Ag$:
\begin{itemize}
    \item $R^{I}_{a}:=\Delta_{W}\cup\{(i,w)\}$
    \item $R^{K}_{a}=R^{B}_{a}:=\Delta_{W}$;
    \item $R^{K}_{c}:=\Delta_{W}\cup\{(w,i)\}$;
    \item $R^{I}_{c}=R^{B}_{c}:=\{(i,i),(j,j),(w,i)\}$,
    \end{itemize}
    and, for any other $b\ne a,c$:
    \begin{itemize}
    \item $R^{K}_{b}=R^{B}_{b}=R^{I}_{b}:=\Delta_{W}\cup\{(i,j),(w,u)\}$.
\end{itemize}
Let $v$ be such that $v(q)=\{i,w,u\}$. Upon extending $v$ to an evaluation over the whole $\mathrm{Fm}_{\mathsf{S}}$, one has $\mathcal{M},i\models\sab(S_{c,b}q)$ but $\mathcal{M},i\not\models I_{a}\neg K_{b}q$. \qed
\end{enumerate}
\end{proof}

Let us briefly elaborate on Proposition \ref{prop:threeagents}. 
Items (1) and (2) directly result from our notion of a true secret. On the one hand, knowing that agent $a$ keeps $\varphi$ secret from agent $b$ entails knowing that $\varphi$ is true. On the other hand, if agent $a$ intends to keep $\varphi$ secret from agent $b$, then she intends to act in such a way that agent $b$ does not know that someone else is keeping $\varphi$ secret from her, on pain of disclosing $\varphi$ to agent $b$. 

Assuming by hypothesis that agent $a$ intends to keep $\varphi$ secret from agent $b$, the following items state conditions on trust from agent $a$ to another agent $c$. 
Item (3) establishes the two cases that convince agent $a$ that it is not worth keeping $\varphi$ secret from agent $c$ too: $(i)$ agent $a$ does not believe that agent $c$ communicates $\varphi$ to agent $b$, no matter the reason, or $(ii)$ agent $a$ does not intend to behave in such a way that agent $c$ can communicate $\varphi$ to agent $b$.
Item (4) states that if agent $a$ intends to communicate $\varphi$ to agent $c$, then she does not intend that agent $c$ may transfer $\varphi$ to agent $b$. Now, assuming that agent $a$ knows that agent $c$ keeps $\varphi$ secret from $b$ as well, items (5) and (6) establish that it is not always the case that agent $a$ intends to hide $c$'s secret to agent $b$. This depends on agent $a$'s intention of keeping $S_{c,b} \varphi$ true. In fact, agent $a$ may be motivated to convince agent $c$ that $\varphi$ is false in order to strengthen the secrecy of $\varphi$ to agent $b$.
Item (7) investigates the converse relation between the two secrets discussed above. Hiding from agent $b$ the fact that another agent $c$ is keeping $\varphi$ secret from agent $b$ does not mean we intend to hide $\varphi$ from agent $b$. For instance, an agent $a$ may want to reveal the secret $\varphi$ to agent $b$ without losing information about the secret keeper $c$. Note that items (1), (6), and (7) can also be obtained in \cite{Xiong2023}.

\section{Related works}\label{rel}

As we have already noted, there is a heterogeneous literature on the notion of secrecy, and logical papers such as~\cite{Ismail2020,Xiong2023} are also part of this literature. These works paved the way for the formal analysis of a commonsense notion of secret and the notion of knowing a secret, respectively.

Along the previous sections we have dealt with \cite{Xiong2023} in order to show that their framework can be somewhat expanded in order to extend the formal treatment of ``having a secret'' to ``intending to keep a true secret''. Indeed, we have shown that, once a simple intentionality operator is added, a large part of their theory can be preserved. We confine ourselves to observe that, once a wide framework accounting for both ``human'' and ``non-human'' dynamics of secrecy, and so eventually with non-perfect reasoners, is considered, then the Secret negation completeness' principle (cf. Proposition \ref{prop:notnegcompl}) need not hold, as there might be propositions that agents are not aware of. In turn, this entails that agents need not be always aware if they are willing to keep a piece of information secret or not. Since due references to the Xiong-{\AA}gotnes framework have been deepened along the preceding pages, we will not go any further, referring the interested reader to their work for details.

The system proposed in \cite{Ismail2020} is based on a sorted first-order language with equality, i.e., on a fragment of the VEL language of \cite{Bennett2004} equipped with a special sort for groups, two normal modal operators for belief and intention, and a non-normal modal operator for revelation. Thus, it is difficult to compare this system with ours from a syntactic and semantic point of view. However, some reflections on their definition of secrecy are in order. Indeed, in \cite{Ismail2020} Ismail and Shafie are after to formalize a notion of a secret involving a \emph{group} of secret keepers, a group of nescients, the content of a secret, a \emph{condition} under which the secret is supposed to be kept, and time. They provide the term-defined formula $\mathrm{Secret}_{0}(\varphi,K,N,\psi,t)$ whose intuitive reading is ``the group $K$ keeps $\varphi$ secret from the group $N$ under the condition $\psi$ at $t$'', and which subsumes the following facts. At time $t$, (i) it is true that $K$ will be possibly non-empty; (ii) any member $x$ of $K$ believes that $\varphi$ and $\psi$ and, moreover, that $N$ will be possibly non-empty; (iii) any member $x$ of $K$ does not believe that there is a member $y$ of $N$ whom $\varphi$ is revealed at $t$; (iv) any member $x$ of $K$ has the intention that in any time point $t'$ after $t$, as long as $\psi$ is true, $\varphi$ is revealed to no member of $N$.
 A key concept in the Ismail-Shafie approach is played by the notion of \textit{revelation}. As they point out (see \cite[p. 83]{Ismail2020}), they do not account \emph{acts} of revelation. Instead, they rely on a \emph{passive} notion of revelation, which allows us to interpret a statement like ``$\varphi$ is revealed to agent $x$'' as ``$\varphi$ is exposed or not covered to $x$''. As they remark, their notion of revelation is stronger than awareness but weaker than belief.
Upon considering their definition, it is not difficult to see that their notion of keeping a secret and the concept outlined in the present paper share a common flavor. In fact, in the light of their notion of revelation, understanding condition (iv) above as ``any secret keeper $a$ intends to bring about a state of affairs in which no nescient $b$ knows that $\varphi$'' is not a stretch. Therefore, their concept of keeping a secret is, at least in some regard, similar to ours, e.g., as it implies that secret keepers believe the secret is true, that it has not been revealed to nescients, and that secret keepers intend to preserve the ignorance of nescients about the (content of the) secret over time. However, it should be noted that, besides the obvious differences due to the employment of different syntactical and semantical settings, our framework and \cite{Ismail2020} differ in several aspects.
First, the system in \cite{Ismail2020} considers the belief operator $B$ as a $KD45$ operator and the intentionality $I$ as a $KD$ modal operator. This means that, concerning the belief operator, such a system is more ``powerful'' than ours. Still, in \cite{Ismail2020} intentionality is expressed as a $KD$ operator, while the operator $I$ we assume in this venue is $KD4$. Also, the interplay between belief and intentions in \cite{Ismail2020}  differs from ours. For example, axiom (A10) (and, of course, (A11)) are not  (\emph{mutatis mutandis}) mirrored in \cite{Ismail2020}. Conversely, we do not assume neither the negative introspection of agents with respect to intentions (axiom \textbf{IB1}) nor that agents which intend a given proposition $\varphi$ cannot believe that $\varphi$ is false (axiom \textbf{IB3}), see \cite[p. 84]{Ismail2020} for details. Finally, it should be remarked that in \cite{Ismail2020} true secrets and true ignorance of nescients are not considered as we do in the present paper. Therefore, as stated at the beginning of this section, our framework might be somehow regarded as ``intermediate'' between \cite{Xiong2023} and \cite{Ismail2020}. However,  it enjoys features that are not shared by previous attempts. 

\section{Conclusion}\label{concl}

In this work, we have introduced a robust formalization of the nuanced concept of intending to keep a true secret by pushing forward formal investigations carried out in \cite{Xiong2023} to capture the intricate dynamics of knowledge, beliefs, and intentions of secret keepers. The distinction between having a secret and intentionally keeping it is paramount, as it offers a richer perspective on the nature of secrecy. Indeed, the proposed formalization highlights three main components we believe should be taken into account in clarifying the concept of ``intending to keep $\varphi$ secret'': the secret keeper's knowledge that $\varphi$, her attribution of true ignorance to the agent/nescient she intends to keep $\varphi$  secret from, and her intention to act in such a way to ensure the persistence of the nescient's ignorance about $\varphi$. The last component is formalized by expanding the usual machinery of epistemic/doxastic logic by an intention operator, which enables a comprehensive understanding of agents' commitments to keep a secret.
Interestingly enough, the approach outlined in this paper suggests insightful considerations on ``keeping a true secret'' as well as it arguably provides a rather flexible framework which, 
positioned between existing approaches, adeptly navigates the static and intentional dimensions of secrecy and presents a versatile foundation opening avenues for diverse and impactful developments.

A first natural follow-up of our paper is investigating further metatheoretical properties of systems proposed in this venue, particularly regarding computational complexity issues with an eye to their applications.

Also, an important role in the dynamics of secrecy is played by the revelation of a true secret, which is meant as the act of communicating secret information with the expectation it still remains a secret to those not involved in the communication. Indeed, as it has been pointed out in the literature, e.g., in \cite{Bellman1981}, such a communicating act brings a metacommunicative dimension. Quoting B.L.~Bellman:
\begin{quote}
Secrecy is metacommunicative because when one hears the telling of a secret, several implicit instructions accompany it and constitute its key. That includes not only how the talk is to be understood but also that the information is not to be repeated and that the source where the knowledge was obtained is to be protected \cite[p. 9]{Bellman1981}.
\end{quote}
Therefore, a natural follow-up would be investigating, from a formal perspective, a suitable notion of revealing true secrets capable of encoding metacommunicative conditions it underpins. 
Along this line, it is also worth comparing 
our approach with the communication-based model of dynamic epistemic logic, with the aim of studying how intentions may evolve in response to communicative updates and how states accessible to $I_a$ may interact with communicative dynamics. Exploring the interaction between intentions and dynamic epistemic modes could provide insights into scenarios in which intentions and agents' epistemic states change simultaneously or influence each other, which could be shaped by a combination of these frameworks. 

As we have formalized it, the notion of ``intending to keep a true secret'' involves two agents only, i.e., a secret keeper and a nescient. However, real-world scenarios foresee many situations in which secret keepers, as well as nescients, form disjoint groups, each one with, e.g., specific inner ``epistemic dynamics''. For instance, each member $a$ of a group of secret keepers may know (or believe) any other secret keeper $c$ knows $a$ is among secret keepers. Generally, any group member might have reasonable expectations about the intentions, beliefs, and knowledge of other members of the same group. Therefore, we intend to further research the formal treatment of ``group secrets'' and how they change once group dynamics have been modified. We remark that a preliminary investigation of the above topics has been already carried out by the authors of the present paper in \cite{Aldini2025}.

As we delve into potential applications, the first important task is to make our proposed model susceptible to dynamic evolution.
In particular, a natural progression involves imbuing the model with temporal dynamics, enabling a more sophisticated representation of how secrets unfold over time. Integrating dynamic operators can capture the nuances of revealed secrets and the subsequent adjustments in agents' beliefs.
Moving beyond temporal aspects, we argue that our model can be enriched to unravel intricate relationships among agents. Exploring how an agent's knowledge and intentions influence the dynamics of secrecy, among others, adds a layer of complexity essential for understanding real-world social interactions.

Translating theoretical advancements into practical applications is a compelling direction, especially in cybersecurity. Addressing challenges such as safeguarding sensitive information \cite{10.1145/3289255}, managing access controls \cite{10.1007/978-3-540-78499-9_16,DBLP:conf/stairs/GenoveseRGT10}, and detecting internal threats within our framework holds promise for enhancing security protocols.
Moreover, representing and analyzing complex security policies within our framework is an intriguing avenue. This entails incorporating temporal constraints and accounting for contingent conditions to offer a comprehensive approach to security management.

Real-world applicability calls for experimental scrutiny to evaluate the model's effectiveness in capturing the intricacies of secrecy dynamics. Practical implementation in artificial intelligence or multi-agent systems, potentially leveraging machine learning, could bridge the gap between theoretical advancements and tangible impact.
As we envision the model's journey, multidimensional extensions come into play. 
To broaden the model's applicability, we aim to handle secrecy in contexts where agents have diverse sensitivity attributes, trust relationships~\cite{10.1093/logcom/exac016,10.1007/978-3-030-86772-0_41,10.1016/j.ijar.2024.109167,TagliaferriOslo}, or multiple objectives.
In conclusion, we argue that, due to its versatility, our model lends itself to a rich tapestry of possibilities. From dynamic refinements to real-world applications, it beckons researchers to chart new territories in the ever-evolving landscape of secrecy dynamics.
\subsection*{Acknowledgements}
The work of A. Aldini and P. Graziani was supported by the Italian Ministry of Education, University and Research through the PRIN 2022 project ``Developing Kleene Logics and their Applications'' (DeKLA), project
code: 2022SM4XC8. 
The work of D. Fazio and R. Mascella has been funded by the European Union - NextGenerationEU under the Italian Ministry of University and Research (MUR) National Innovation Ecosystem grant ECS00000041 - VITALITY - CUP C43C22000380007.

\bibliographystyle{plain}
\bibliography{bibliography.bib}

\appendix

\section{Some proofs}

\noindent In this section we outline the proof of  Theorem \ref{thm:dec}.\\ 

\subsection*{Proof of Theorem \ref{thm:dec}.} Since $\mathsf{S}$ is a finitely axiomatized normal modal system, it would suffice to show that it enjoys the finite model property (see, e.g. \cite[Theorem 8.15]{Hughes1}). To this aim, we put in good use the customary proof strategy of \emph{filtrations}.\\
By Theorem \ref{compl-sound}, for any $\varphi\in\mathrm{Fm}_{\mathsf{S}}$, if $\not\vds\varphi$, then there is an $\mathcal{S}$-model $\mathcal{M}$ falsifying it. If we are able to prove that, if such a model exists, then there is also a \emph{finite} $\mathcal{S}$-model $\mathcal{M}^{*}$ such that $\mathcal{M}^{*}\not\models\varphi$, one may provide a decision procedure for the validity of formulas as follows. We generate finite $\mathcal{S}$-models, as well as finite $\mathsf{S}$-proofs, in some order. If $\varphi$ is falsifiable, then the procedure will return a finite model falsifying it. On the contrary, the procedure will return a finite (since $\mathsf{S}$ is finitely axiomatized) proof for it (cf. \cite[Theorem 8.15]{Hughes1}).\\

Let $t:\mathrm{Fm}_{\mathsf{S}}\to\mathrm{Fm}_{\mathsf{S}}$ be defined recursively a follows:
\begin{itemize}
    \item[] $t(p):= p$, for any $p\in Var$;
    \item[] $t(\neg\varphi)=\neg t(\varphi)$;
    \item[] $t(\varphi\land\psi)=t(\varphi)\land t(\psi)$;
    \item[] $t(\star_{a}\varphi)=\star_{a}t(\varphi)$, for any $\star\in\{K,B\}$, and $a\in Ag$;
    \item[] $t(I_{a}\varphi)=I_{a}K_{a}t(\varphi)$ ($a\in Ag$).
\end{itemize}
The following lemma can be stated by means of a customary induction on the structure of formulas upon noticing that, for any $\varphi\in\mathrm{Fm}_{\mathsf{S}}$, $\vds I_{a}\varphi\leftrightarrow I_{a}K_{a}\varphi$ (left to the reader).

\begin{lemma}\label{lem:auxdec1}Let $\mathcal{M}=(W,\{R^{K}_{a}\}_{a\in Ag},\{R^{I}_{a}\}_{a\in Ag},\{R^{B}_{a}\}_{a\in Ag},v)$ be an $\mathcal{S}$-model. Then, for any  $\varphi\in\mathrm{Fm}_{\mathsf{S}}$, \[\mathcal{M}\models\varphi\text{ iff }\mathcal{M}\models t(\varphi).\]
\end{lemma}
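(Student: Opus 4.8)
\textbf{Proof plan for Lemma \ref{lem:auxdec1}.}

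The plan is to prove the biconditional $\mathcal{M}\models\varphi$ iff $\mathcal{M}\models t(\varphi)$ by showing the stronger statement that $\vds\varphi\leftrightarrow t(\varphi)$, which together with the soundness direction of Theorem \ref{compl-sound} immediately yields the semantic equivalence. First I would establish the preliminary fact flagged in the excerpt, namely $\vds I_{a}\varphi\leftrightarrow I_{a}K_{a}\varphi$ for every $\varphi\in\mathrm{Fm}_{\mathsf{S}}$ and every $a\in Ag$: the implication $I_{a}\varphi\to I_{a}K_{a}\varphi$ is exactly axiom (A10), while the converse follows from $\vds K_{a}\varphi\to\varphi$ (axiom (A3)), by applying $\mathrm{RN}_{I_{a}}$ to get $\vds I_{a}(K_{a}\varphi\to\varphi)$, then (A2) with $\star=I_{a}$ and (RMP) to conclude $\vds I_{a}K_{a}\varphi\to I_{a}\varphi$.

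Next I would prove $\vds\varphi\leftrightarrow t(\varphi)$ by induction on the structure of $\varphi$. The base case $\varphi=p$ is trivial since $t(p)=p$. The Boolean cases $\neg\varphi$ and $\varphi\land\psi$ follow from the induction hypotheses together with the fact that classical propositional reasoning (available via (A1) and (RMP)) lets us replace provably equivalent subformulas inside negation and conjunction. For the modal cases $\star_{a}\varphi$ with $\star\in\{K,B\}$: from the induction hypothesis $\vds\varphi\leftrightarrow t(\varphi)$ we obtain $\vds\star_{a}\varphi\leftrightarrow\star_{a}t(\varphi)$ by applying $\mathrm{RN}_{\star_{a}}$ to each direction and then (A2) and (RMP), i.e. the standard replacement-of-equivalents property for normal modalities (this is item 28 of Table \ref{tab:agot} for $K$, and the argument is identical for $B$). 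Finally, for the case $I_{a}\varphi$: by the induction hypothesis and the same replacement argument applied to $I_{a}$ we get $\vds I_{a}\varphi\leftrightarrow I_{a}t(\varphi)$, and then combining with the preliminary fact $\vds I_{a}t(\varphi)\leftrightarrow I_{a}K_{a}t(\varphi)$ we conclude $\vds I_{a}\varphi\leftrightarrow I_{a}K_{a}t(\varphi)=t(I_{a}\varphi)$, closing the induction.

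Having $\vds\varphi\leftrightarrow t(\varphi)$, the semantic claim is immediate: by the soundness half of Theorem \ref{compl-sound} the formula $\varphi\leftrightarrow t(\varphi)$ is $\mathcal{S}$-valid, hence true at every world of every $\mathcal{S}$-model $\mathcal{M}$; in particular $\mathcal{M},i\models\varphi$ iff $\mathcal{M},i\models t(\varphi)$ for each $i\in W$, so $\mathcal{M}\models\varphi$ iff $\mathcal{M}\models t(\varphi)$. I expect no serious obstacle here; the only point requiring a little care is bookkeeping in the inductive step for $I_{a}$, making sure that the translation is applied to the immediate subformula before the $K_{a}$ is prepended, and that the replacement-of-equivalents lemma for the normal modalities has been recorded (or is taken as a standard fact about normal modal logics, as the excerpt does via Table \ref{tab:agot}).
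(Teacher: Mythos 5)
Your proposal is correct and follows essentially the same route the paper indicates: a structural induction on $\varphi$ whose only non-routine step is the equivalence $\vds I_{a}\varphi\leftrightarrow I_{a}K_{a}\varphi$ (which you correctly derive from (A10) in one direction and from (A3), $\mathrm{RN}_{I_{a}}$, (A2) and (RMP) in the other), with the remaining cases handled by replacement of provable equivalents under the normal modalities. The paper leaves all of this to the reader, so your write-up simply fills in the details of the intended argument.
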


For any $\varphi\in \mathrm{Fm}_{\mathsf{S}}$, let $\Theta_{\varphi}$ be the set of subformulas of $\varphi$ (cf. \cite[p. 136]{Hughes1}). Let $\varphi\in\mathrm{Fm}_{\mathsf{S}}$ and consider an $\mathcal{S}$-model $\mathcal{M}=(W,\{R^{I}_{a}\}_{a\in Ag},\{R^{K}_{a}\}_{a\in Ag},\{R^{B}_{a}\}_{a\in Ag},v)$. Let $\approx_{t(\varphi)}\subseteq W^{2}$ be defined as follows, for any $i,j\in W$:
\[i\approx_{t(\varphi)}j\text{ iff, for any }\psi\in \Theta_{t(\varphi)},\ \mathcal{M},i\models\psi\text{ iff }\mathcal{M},j\models\psi.\]
Clearly, $\approx_{t(\varphi)}$ is an equivalence relation over $W$. Let us consider the set $W^{*}$ obtained by picking exactly one representative from each equivalence class $[i]_{\approx_{t(\varphi)}}$ ($i\in W$). Note that, since $t(\varphi)$ is of finite length, $W^{*}$ is finite. Moreover, let $v^{*}:Var\to\mathcal{P}(W^{*})$ be defined as $v^{*}(p)=v(p)\cap W^{*}$. Let $\{\widetilde{R}^{K}_{a}\}_{a\in Ag}$, $\{\widetilde{R}^{I}_{a}\}_{a\in Ag}$, and $\{\widetilde{R}^{B}_{a}\}_{a\in Ag}$ be families of binary relations over $W^{*}$. The model (to be meant as usually) $$\mathcal{M}^{*}=(W^{*},\{\widetilde{R}^{K}_{a}\}_{a\in Ag},\{\widetilde{R}^{I}_{a}\}_{a\in Ag},\{\widetilde{R}^{B}_{a}\}_{a\in Ag},v^{*})$$ is a \emph{filtration} of $\mathcal{M}$ \emph{through} $t(\varphi)$ (cf. \cite[p. 138]{Hughes1}) provided that, for any $a\in Ag$, $\star\in\{K,I,B\}$:
\begin{itemize}
    \item For any $i,j\in W^{*}$, if there is some $k\in W$ such that $R^{\star}_{a}(i,k)$ and $k\approx_{t(\varphi)}j$, then $\widetilde{R}^{\star}_{a}(i,j)$;
    \item For any $i,j\in W^{*}$, if $\widetilde{R}^{\star}_{a}(i,j)$, then, for any $\star_{a}\psi\in\Theta_{t(\varphi)}$: $$\mathcal{M},i\models \star_{a}\psi\text{ implies }\mathcal{M},j\models\psi.$$
\end{itemize}

The next lemma can be proven as for \cite[Theorem 8.1]{Hughes1}.

\begin{lemma}\label{lem:auxdec2} Let $\mathcal{M}=(W,\{R^{K}_{a}\}_{a\in Ag},\{R^{I}_{a}\}_{a\in Ag},\{R^{B}_{a}\}_{a\in Ag},v)$ be an $\mathcal{S}$-model and $\varphi\in\mathrm{Fm}_{\mathsf{S}}$. If $\mathcal{M}^{*}=(W^{*},\{\widetilde{R}^{K}_{a}\}_{a\in Ag},\{\widetilde{R}^{I}_{a}\}_{a\in Ag},\{\widetilde{R}^{B}_{a}\}_{a\in Ag},v^{*})$ is a filtration of $\mathcal{M}$ (through $\approx_{t(\varphi)}$), then, for any $\psi\in\Theta_{t(\varphi)}$, $w\in W^{*}$, $$\mathcal{M}^{*},w\models\psi\text{ iff }\mathcal{M},w\models\psi.$$
\end{lemma}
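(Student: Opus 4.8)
The plan is to prove the statement by induction on the structure of $\psi$, exploiting the fact that $\Theta_{t(\varphi)}$ is closed under subformulas so that the inductive hypothesis is always available for the immediate constituents of $\psi$. For the base case $\psi=p\in Var$, I would simply unfold the definition of $v^{*}$: since $w\in W^{*}$, one has $w\in v^{*}(p)=v(p)\cap W^{*}$ iff $w\in v(p)$, which gives $\mathcal{M}^{*},w\models p$ iff $\mathcal{M},w\models p$. The Boolean cases $\psi=\neg\chi$ and $\psi=\chi_{1}\land\chi_{2}$ are routine: the semantic clauses for $\neg$ and $\land$ at $w$ depend only on the truth values of the (strictly smaller) subformulas $\chi,\chi_{1},\chi_{2}\in\Theta_{t(\varphi)}$, so the equivalence transfers directly through the inductive hypothesis.

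The heart of the argument is the modal case $\psi=\star_{a}\chi$ with $\star\in\{K,I,B\}$, where the two defining clauses of a filtration are used in opposite directions. For the implication $\mathcal{M},w\models\star_{a}\chi$ entails $\mathcal{M}^{*},w\models\star_{a}\chi$, I would take any $j\in W^{*}$ with $\widetilde{R}^{\star}_{a}(w,j)$ and invoke the \emph{second} filtration clause (applicable since $\star_{a}\chi\in\Theta_{t(\varphi)}$) to obtain $\mathcal{M},j\models\chi$; the inductive hypothesis on $\chi$ then yields $\mathcal{M}^{*},j\models\chi$, and since $j$ was arbitrary we conclude $\mathcal{M}^{*},w\models\star_{a}\chi$. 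For the converse, assuming $\mathcal{M}^{*},w\models\star_{a}\chi$, I would take any $k\in W$ with $R^{\star}_{a}(w,k)$ (here $w\in W^{*}\subseteq W$ acts as a world of the original model), pick the representative $j\in W^{*}$ of the class $[k]_{\approx_{t(\varphi)}}$, and use the \emph{first} filtration clause to get $\widetilde{R}^{\star}_{a}(w,j)$; hence $\mathcal{M}^{*},j\models\chi$, whence $\mathcal{M},j\models\chi$ by the inductive hypothesis, and finally $\mathcal{M},k\models\chi$ because $k\approx_{t(\varphi)}j$ preserves the truth value of the subformula $\chi\in\Theta_{t(\varphi)}$.

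The main obstacle is purely bookkeeping within this modal step: one must keep straight which filtration clause supplies which direction, and above all verify at each use that the relevant formula ($\star_{a}\chi$ for the second clause, $\chi$ for the inductive hypothesis and for the $\approx_{t(\varphi)}$-transfer) genuinely lies in $\Theta_{t(\varphi)}$. Once these membership checks are in place, the argument is uniform across $\star\in\{K,I,B\}$ and does not depend on the specific frame conditions of Definition \ref{def:s-mod}, exactly as in the classical filtration theorem of \cite[Theorem 8.1]{Hughes1}; this uniformity is precisely what makes it legitimate to cite that source for the full verification.
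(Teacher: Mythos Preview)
Your proposal is correct and is precisely the standard structural-induction argument for the filtration lemma; the paper itself does not spell out a proof but simply refers to \cite[Theorem 8.1]{Hughes1}, which is exactly the argument you reproduce.
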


Consequently, if we are able to prove that for any $\psi\in\mathrm{Fm_{\mathsf{S}}}$ and any $\mathcal{S}$-model $\mathcal{M}$, one can find a filtration $\mathcal{M}^{*}$ through $t(\psi)$ which is also an $\mathcal{S}$-model, then by Theorem \ref{compl-sound}, Lemma \ref{lem:auxdec1} and Lemma \ref{lem:auxdec2} we have that, for any $\varphi\in\mathrm{Fm}_{\mathsf{S}}$, if $\not\vds \varphi$, then $\mathcal{M}'\not\models\varphi$, for some finite model $\mathcal{M}'$.\\ 

\noindent Let us fix an arbitrary $\varphi\in\mathrm{Fm}_{\mathsf{S}}$ and an $\mathcal{S}$-model $$\mathcal{M}=(W,\{R^{I}_{a}\}_{a\in Ag},\{R^{K}_{a}\}_{a\in Ag},\{R^{B}_{a}\}_{a\in Ag},v).$$ Let $W^{*}$ and $v^{*}$ be built as above through $t(\varphi)$. We define the following binary relations over $W^{*}$, for any $a\in Ag$ and for any $i,j\in W^{*}$:
\begin{itemize}
    \item[-] $i\overline{R}^{K}_{a}j$ if and only if: 
  \begin{itemize}
      \item If $K_{a}\psi\in \Theta_{t(\varphi)}$, then $\mathcal{M},i\models K_{a}\psi$ implies $\mathcal{M},j\models K_{a}\psi$;
      \item If $I_{a}\psi\in \Theta_{t(\varphi)}$, then  $\mathcal{M},i\models I_{a}\psi$ implies $\mathcal{M},j\models I_{a}\psi$;
       \item If $B_{a}\psi\in \Theta_{t(\varphi)}$, then $\mathcal{M},i\models B_{a}\psi$ implies $\mathcal{M},j\models B_{a}\psi$.
  \end{itemize}
  \item[-] $i\overline{R}^{I}_{a}j$ if and only if, for any $I_{a}\psi\in\Theta_{t(\varphi)}$, $\mathcal{M},i\models I_{a}\psi$ implies $\mathcal{M},j\models I_{a}\psi$ and $\mathcal{M},j\models \psi$.
  \item[-] $i\overline{R}^{B}_{a}j$ if and only if:
  \begin{itemize}
      \item If $K_{a}\psi\in \Theta_{t(\varphi)}$, then $\mathcal{M},i\models K_{a}\psi$ implies $\mathcal{M},j\models K_{a}\psi$;
      \item If $I_{a}\psi\in \Theta_{t(\varphi)}$, then  $\mathcal{M},i\models I_{a}\psi$ implies $\mathcal{M},j\models I_{a}\psi$;
       \item If $B_{a}\psi\in \Theta_{t(\varphi)}$, then $\mathcal{M},i\models B_{a}\psi$ implies $\mathcal{M},j\models B_{a}\psi$ and $\mathcal{M},j\models \psi$. 
  \end{itemize}
\end{itemize}

\begin{lemma} $\mathcal{M}^{*}=(W^{*},\{\overline{R}^{K}_{a}\}_{a\in Ag},\{\overline{R}^{I}_{a}\}_{a\in Ag},\{\overline{R}^{B}_{a}\}_{a\in Ag},v^{*})$ is a filtration of $\mathcal{M}$ through $t(\varphi)$.
\end{lemma}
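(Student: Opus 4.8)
The plan is to verify, for each modality $\star\in\{K,I,B\}$ and each agent $a$ (taking $\widetilde{R}^{\star}_{a}:=\overline{R}^{\star}_{a}$), the two defining clauses of a filtration through $t(\varphi)$: the \emph{upward} clause, that $R^{\star}_{a}(i,k)$ together with $k\approx_{t(\varphi)}j$ forces $\overline{R}^{\star}_{a}(i,j)$; and the \emph{downward} clause, that $\overline{R}^{\star}_{a}(i,j)$ forces $\mathcal{M},i\models\star_{a}\psi\Rightarrow\mathcal{M},j\models\psi$ for every $\star_{a}\psi\in\Theta_{t(\varphi)}$. Throughout I will use that $\approx_{t(\varphi)}$ preserves, at each world, the truth value of every subformula of $t(\varphi)$; in particular, since $\Theta_{t(\varphi)}$ is subformula-closed, truth of $\psi$ transfers along $\approx_{t(\varphi)}$ whenever $\star_{a}\psi\in\Theta_{t(\varphi)}$.

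The downward clauses are almost immediate from the definitions of $\overline{R}^{I}_{a}$, $\overline{R}^{B}_{a}$, $\overline{R}^{K}_{a}$. For $\overline{R}^{I}_{a}$ and the third bullet of $\overline{R}^{B}_{a}$ the implication ``$\mathcal{M},i\models\star_{a}\psi$ implies $\mathcal{M},j\models\psi$'' is literally part of the definition; for $\overline{R}^{K}_{a}$ the definition only delivers $\mathcal{M},i\models K_{a}\psi\Rightarrow\mathcal{M},j\models K_{a}\psi$, so one then applies factivity (A3) to reach $\mathcal{M},j\models\psi$ --- the sole use of (A3) in the argument. For the upward clauses, fix $i,j\in W^{*}$ and $k\in W$ with $R^{\star}_{a}(i,k)$ and $k\approx_{t(\varphi)}j$, and check that \emph{every} conjunct in the definition of $\overline{R}^{\star}_{a}$ holds. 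When $\star=K$: a $K_{a}\chi$-conjunct follows from transitivity (A4), an $I_{a}\chi$-conjunct from (A9), and a $B_{a}\chi$-conjunct from (A7), in each case pushing the relevant $\Theta_{t(\varphi)}$-formula from $i$ to $k$ and then, via $k\approx_{t(\varphi)}j$, to $j$. When $\star=B$: from $R^{B}_{a}\subseteq R^{K}_{a}$ (frame clause~6) we have $R^{K}_{a}(i,k)$, so the $K_{a}\chi$- and $I_{a}\chi$-conjuncts are handled exactly as for $\star=K$, while for the $B_{a}\chi$-conjunct (A7) gives $\mathcal{M},j\models B_{a}\chi$ and $R^{B}_{a}(i,k)$ gives $\mathcal{M},k\models\chi$, hence $\mathcal{M},j\models\chi$. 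When $\star=I$: transitivity of intention (A11) pushes $I_{a}\chi$ from $i$ to $k$ to $j$, and $R^{I}_{a}(i,k)$ gives $\mathcal{M},k\models\chi$, hence $\mathcal{M},j\models\chi$.

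The only real subtlety is that $\overline{R}^{K}_{a}$ and $\overline{R}^{B}_{a}$ are defined with ``cross'' conjuncts --- an $\overline{R}^{B}_{a}$-edge must respect $K_{a}$- and $I_{a}$-formulas as well, and similarly for $\overline{R}^{K}_{a}$ --- so in the upward direction one cannot merely appeal to the accessibility relation one started from; this is precisely what forces the positive-introspection axioms (A4), (A7), (A9), the transitivity axiom (A11), and the inclusion $R^{B}_{a}\subseteq R^{K}_{a}$ into play. I note that the particular shape of the translation $t$ --- every $I_{a}$ occurring only in front of a $K_{a}$-formula --- is not needed for the present lemma, but it becomes essential in the companion step (needed to finish the decidability proof) showing that $\mathcal{M}^{*}$ is in addition an $\mathcal{S}$-model: there the frame clause $R^{I}_{a}(i,j)\wedge R^{K}_{a}(j,w)\Rightarrow R^{I}_{a}(i,w)$ of Definition~\ref{def:s-mod} requires transporting $\mathcal{M},j\models\psi$ to $\mathcal{M},w\models\psi$, which succeeds exactly because $\psi$ is then a $K_{a}$-formula and $K_{a}$-formulas are preserved along $\overline{R}^{K}_{a}$.
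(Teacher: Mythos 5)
Your proof is correct and follows essentially the same strategy as the paper's: verify the upward clause by pushing each boxed formula from $i$ to $k$ via the appropriate introspection/transitivity axiom (A4, A7, A9, A11, plus $R^{B}_{a}\subseteq R^{K}_{a}$) and then across $\approx_{t(\varphi)}$ to $j$, and verify the downward clause directly from the definitions, using (A3) for the $K$ case. The paper only writes out the $\star=I$ upward case and the $\star=K$ downward case, leaving the rest to the reader; you have simply filled in those remaining cases with the same techniques.
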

\begin{proof}\label{lem:filtration}Let us prove that, for any $a\in Ag$, any $\star\in\{K,I,B\}$, and $i,j\in W^{*}$, if $R^{\star}_{a}(i,k)$ for some  $k\in W$ and $j\approx_{t(\varphi)}k$, then $\overline{R}^{\star}_{a}(i,j)$. We confine ourselves to the case $\star=I$ leaving the remaining cases to the reader. Assume the hypotheses of the claim. Let $I_{a}\psi\in\Theta_{t(\varphi)}$. Therefore, $\mathcal{M},i\models I_{a}\psi$ implies $\mathcal{M},i\models I_{a}I_{a}\psi$ (by A11). So $\mathcal{M},k\models I_{a}\psi$ and $\mathcal{M},k\models \psi$. However, $I_{a}\psi,\psi\in\Theta_{t(\varphi)}$. We conclude $\mathcal{M},j\models I_{a}\psi$ and $\mathcal{M},j\models \psi$.\\
Showing that, for any $\star\in\{K,I,B\}$, $a\in Ag$, $i,j\in W^{*}$, $\star_{a}\psi\in\Theta_{t(\varphi)}$, if  $\overline{R}^{\star}_{a}(i,j)$, then $\mathcal{M},i\models \star_{a}\psi$ implies $\mathcal{M},j\models \psi$ is straightforward. For example,  if $K_{a}\psi\in\Theta_{t(\varphi)}$ and $\overline{R}^{K}_{a}(i,j)$, then $\mathcal{M},i\models K_{a}\psi$ implies $\mathcal{M},j\models K_{a}\psi$ and so, by (A3), $\mathcal{M},j\models \psi$. \qed
\end{proof}
Finally, we prove the following lemma.

\begin{lemma}\label{thm: filtrationsmod}$\mathcal{M}^{*}=(W^{*},\{\overline{R}^{K}_{a}\}_{a\in Ag},\{\overline{R}^{I}_{a}\}_{a\in Ag},\{\overline{R}^{B}_{a}\}_{a\in Ag},v^{*})$ is an $\mathcal{S}$-model.
\end{lemma}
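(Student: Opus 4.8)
The plan is to verify directly that the frame underlying $\mathcal{M}^{*}$ satisfies each of the eight conditions of Definition~\ref{def:s-mod}; together with the fact that $W^{*}$ is non-empty (since $W$ is) and that $v^{*}$ is, by construction, an evaluation on $W^{*}$, this yields the claim. All of the verification is a routine inspection of the three relations $\overline{R}^{K}_{a},\overline{R}^{I}_{a},\overline{R}^{B}_{a}$, with exactly one genuinely delicate case.

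First I would dispose of the seriality conditions (item~2, and the seriality half of item~3). Given $i\in W^{*}$, seriality of $R^{B}_{a}$ (resp.\ $R^{I}_{a}$) in $\mathcal{M}$ provides some $k\in W$ with $R^{B}_{a}(i,k)$ (resp.\ $R^{I}_{a}(i,k)$); letting $j\in W^{*}$ be the representative of $[k]_{\approx_{t(\varphi)}}$, the first defining clause of a filtration gives $\overline{R}^{B}_{a}(i,j)$ (resp.\ $\overline{R}^{I}_{a}(i,j)$). Reflexivity of $\overline{R}^{K}_{a}$ (half of item~4) is immediate, since every defining implication is trivially satisfied when $i=j$. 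Transitivity of $\overline{R}^{K}_{a}$ and of $\overline{R}^{I}_{a}$ (the rest of items~3 and~4) follows by chaining the relevant implications; note that the definition of $\overline{R}^{I}_{a}$ propagates \emph{both} $I_{a}\psi$ and $\psi$, so both survive the composition. For item~5 I would push $I_{a}\psi$ from $i$ to $j$ using the $I_{a}$-clause in the definition of $\overline{R}^{K}_{a}$, and then from $j$ to $w$ using $\overline{R}^{I}_{a}(j,w)$. Item~6 ($\overline{R}^{B}_{a}\subseteq\overline{R}^{K}_{a}$) holds because each clause defining $\overline{R}^{B}_{a}$ entails the corresponding clause of $\overline{R}^{K}_{a}$ (in particular, the $B_{a}$-clause of $\overline{R}^{B}_{a}$, which additionally forces $\psi$ to be true, entails the weaker $B_{a}$-clause of $\overline{R}^{K}_{a}$). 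Item~7 follows by composing, clause by clause ($K_{a}$, $I_{a}$, $B_{a}$), the definition of $\overline{R}^{K}_{a}(i,j)$ with that of $\overline{R}^{B}_{a}(j,w)$.

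The hard part will be item~8: from $\overline{R}^{I}_{a}(i,j)$ and $\overline{R}^{K}_{a}(j,w)$ we must derive $\overline{R}^{I}_{a}(i,w)$, i.e.\ that for every $I_{a}\psi\in\Theta_{t(\varphi)}$, $\mathcal{M},i\models I_{a}\psi$ implies $\mathcal{M},w\models I_{a}\psi$ \emph{and} $\mathcal{M},w\models\psi$. Transporting $I_{a}\psi$ to $w$ is fine via the $I_{a}$-clause of $\overline{R}^{K}_{a}(j,w)$, but — because $I_{a}$ is not factive — obtaining $\psi$ at $w$ cannot come from $I_{a}\psi$ at $w$. The key observation I would use is structural: by construction of $t$, every occurrence of $I_{a}$ in $t(\varphi)$ is immediately followed by $K_{a}$, so whenever $I_{a}\psi\in\Theta_{t(\varphi)}$ one actually has $\psi=K_{a}\psi'$ for some $\psi'$, and this $K_{a}\psi'=\psi$ is itself a member of $\Theta_{t(\varphi)}$. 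Then, given $\mathcal{M},i\models I_{a}\psi$, the relation $\overline{R}^{I}_{a}(i,j)$ yields $\mathcal{M},j\models I_{a}\psi$ and $\mathcal{M},j\models K_{a}\psi'$; now $\overline{R}^{K}_{a}(j,w)$ transports $K_{a}\psi'$ (via its $K_{a}$-clause, legitimate since $K_{a}\psi'\in\Theta_{t(\varphi)}$) to give $\mathcal{M},w\models K_{a}\psi'$, that is $\mathcal{M},w\models\psi$, and transports $I_{a}\psi$ (via its $I_{a}$-clause) to give $\mathcal{M},w\models I_{a}\psi$. Hence $\overline{R}^{I}_{a}(i,w)$. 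This is precisely the point for which the translation $t$ was introduced; with it in place, $\mathcal{M}^{*}$ meets all the conditions of Definition~\ref{def:s-mod} and is therefore an $\mathcal{S}$-model.
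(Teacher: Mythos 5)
Your proposal is correct and follows essentially the same route as the paper's own proof: routine verification of conditions (2)--(7), with the crux being condition (8), which you resolve exactly as the paper does by exploiting that the translation $t$ forces every $I_{a}\psi\in\Theta_{t(\varphi)}$ to have the form $I_{a}K_{a}\psi'$ with $K_{a}\psi'\in\Theta_{t(\varphi)}$, so that $\psi$ can be transported to $w$ via the $K_{a}$-clause of $\overline{R}^{K}_{a}$. You merely spell out the cases (5)--(7) that the paper leaves to the reader.
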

\begin{proof} We show that $(W^{*},\{\overline{R}^{K}_{a}\}_{a\in Ag},\{\overline{R}^{I}_{a}\}_{a\in Ag},\{\overline{R}^{B}_{a}\}_{a\in Ag})$ is an $\mathcal{S}$-frame. 
To this aim we prove that conditions (2)-(8) from Definition \ref{def:s-mod} hold for any $a\in Ag$.\\
(2) That $\overline{R}^{B}_{a}$ is serial is ensured by the fact that ${R}^{B}_{a}$ is so and $\mathcal{M}^{*}$ is a filtration. In fact, let $i\in W^{*}$. Then there is $j\in W$ s.t. $R^{B}_{a}(i,j)$. Let $k\in W^{*}$ be such that $j\approx_{t(\varphi)}k$. By Lemma \ref{lem:filtration}, one has $\overline{R}^{B}_{a}(i,k)$.\\
(3) $\overline{R}^{I}_{a}$ is serial by reasoning as for (2). Transitivity can be proven as for $\mathsf{K4}$ (see \cite[p. 144]{Hughes1}).\\
(4) The reflexivity of $\overline{R}^{K}_{a}$ can be proven as for \cite[Theorem 8.4]{Hughes1}, while transitivity is apparent.\\
(5)-(7) are straightforward and so they are left to the reader.\\
Finally, as regards (8), let  $\overline{R}^{I}_{a}(i,j)$ and $\overline{R}^{K}_{a}(j,w)$. Let $I_{a}\psi\in\Theta_{t(\varphi)}$. Note that $I_{a}\psi = I_{a}K_{a}\psi'$, for some $\psi'\in\mathrm{Fm}_{\mathsf{S}}$. Let $\mathcal{M},i\models I_{a}\psi$, then, by hypothesis, $\mathcal{M},j\models I_{a}\psi$ and $\mathcal{M},j\models\psi=K_{a}\psi'$. Since $K_{a}\psi'\in\Theta_{t(\varphi)}$, one has $\mathcal{M},w\models I_{a}\psi$ and also $\mathcal{M},w\models K_{a}\psi'=\psi$. This concludes the proof of the theorem. \qed
\end{proof}
Note that the last part of the proof of Lemma \ref{thm: filtrationsmod} motivates why, in order to show that, if a formula $\varphi$ is falsifiable, then it is in a finite model, we have carried out the filtration procedure through $t(\varphi)$ rather than on $\varphi$, and then exploited Lemma \ref{lem:auxdec1}. 
Upon recollecting the above results and remarks, Theorem \ref{thm:dec} is proved.\qed

\end{document}